\DeclareMathOperator{\Gal}{Gal}
\DeclareMathOperator{\Res}{Res}
\DeclareMathOperator{\Max}{Max}
\def\Tr{\operatorname{Tr}}
\def\N{\operatorname{N}}
\def\z{\zeta}
\def\a{\alpha}
\def\w{\omega}
\renewcommand{\phi}{\varphi}
\newtheorem{theorem}{Theorem}[section]
\newtheorem*{thm}{Theorem}
\newtheorem*{dfn}{Definition}
\newtheorem{proposition}[theorem]{Proposition}
\newtheorem{lemma}[theorem]{Lemma}
\newtheorem{corollary}[theorem]{Corollary}
\newtheorem{conjecture}[theorem]{Conjecture}
\theoremstyle{definition}
\newtheorem{remark}[theorem]{Remark}
\newtheorem{definition}[theorem]{Definition}
\newtheorem{example}[theorem]{Example}
\def\cqfd{
{\hfill
\kern 6pt\penalty 500
\raise -1pt\hbox{\vrule\vbox to 5pt{\hrule width 4pt
\vfill\hrule}\vrule}}
\break}
\font\tengoth=eufm10
\font\sevengoth=eufm7
\font\fivegoth=eufm5
\def\goth{\fam\gothfam\tengoth}
\def\Sgoth{{\goth S }}
\title{Maximal differential uniformity polynomials 
}
\author[Aubry]{Yves Aubry}
\address[Aubry]{Institut de Math\'ematiques de Toulon - IMATH, Universit\'e de Toulon, France}
\address[Aubry]{Institut de Math\'ematiques de Marseille - I2M, Aix Marseille Univ, CNRS, Centrale Marseille, France}
\email{yves.aubry@univ-tln.fr}
\author[Herbaut]{Fabien Herbaut}
\address[Herbaut]{ESPE Nice-Toulon, Universit\'e de Nice Sophia-Antipolis, France}
\address[Herbaut]{Institut de Math\'ematiques de Toulon - IMATH, Universit\'e de Toulon, France}
\email{fabien.herbaut@unice.fr}
\author[Voloch]{Jos\'e Felipe Voloch}
\address[Voloch]{School of Mathematics and Statistics, University of Canterbury, Private Bag 4800, Christchurch 8140, New Zealand}
\email{felipe.voloch@canterbury.ac.nz}
\urladdr{http://www.math.canterbury.ac.nz/\~{}f.voloch}
\begin{document} 
\begin{abstract}
We provide  explicit infinite families of integers $m$
such that all the polynomials  of
${\mathbb F}_{2^n}[x]$ of degree $m$ have
maximal differential uniformity for $n$ large enough.
We also prove a conjecture of the third author for these families.
%\bigskip
%Keywords: 
%MSC[2010] 
\end{abstract}

\date{\today}

\maketitle

\section{Introduction}

Throughout this paper
$n$ is a positive integer and $q=2^n$.
For a polynomial $f \in {\mathbb F}_{q}[x]$ 
we define the differential uniformity $\delta(f)$ following Nyberg (\cite{Nyberg}):
$$\delta(f):=\max_{(\alpha,\beta)\in{\mathbb F}_q^{\ast}\times{\mathbb F}_q}\sharp\{x\in{\mathbb F}_{q} \mid f(x+\alpha)+f(x)=\beta\}.$$

When $\delta(f)=2$  the associated functions 
$f : {\mathbb F}_{q} \rightarrow {\mathbb F}_{q}$ are called APN 
(Almost Perfectly Nonlinear). These functions
 have been extensively studied
as they offer good resistance against  differential attacks
(see \cite{Biham_Shamir}).
Among them, those which are APN over infinitely
many extensions of ${\mathbb F}_{q}$ have
attracted special attention.

In the opposite direction 
the third author proved  in  \cite{Felipe} that most polynomials $f \in {\mathbb F}_{q}[x]$ 
of degree $m\equiv 0$ or $3\pmod 4$ have differential uniformity 
equal to $m-1$ or $m-2$, the largest possible for polynomials of degree $m$. 
Precisely, he proved that for a given integer $m>4$
such that $m\equiv 0 \pmod 4$ 
(respectively $m \equiv 3 \pmod 4$),
if $\delta_0=m-2$
(respectively $\delta_0=m-1$) then
$$ 
\lim_{n\rightarrow \infty} 
\frac{
\sharp \{ f\in{\mathbb F}_{2^n}[x] \mid \deg(f)=m,\ \delta(f)=\delta_0 \} } 
{
\sharp 
\{
f\in{\mathbb F}_{2^n}[x]
 \mid 
\deg(f)=m
\} } = 1. $$

The first two authors extended this result
to the second order differential uniformity in \cite{YvesFabien}.

The following conjecture is also stated in  \cite{Felipe}: \\
\begin{conjecture}\label{conjecture}
For a given integer $m>4$, 
there exists
$\varepsilon_m >0$ such that for all sufficiently large $n$,
 if $f$ is a polynomial of degree $m$ over 
$ {\mathbb F}_{2^n}$, for at least 
$\varepsilon_m 2^{2n}$ values of $(\alpha, \beta) \in   {\mathbb F}^{\ast}_{2^n}\times{\mathbb F}_{2^n}$
we have
$ \sharp \{x\in{\mathbb F}_{q} \mid f(x+ \alpha) + f(x)=\beta\} = \delta(f)$.
\end{conjecture}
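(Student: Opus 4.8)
The plan is to reinterpret $N(\alpha,\beta):=\sharp\{x\in\mathbb F_q : f(x+\alpha)+f(x)=\beta\}$ as a count of rational points in the fibre of a fixed cover of surfaces, and then to invoke the Chebotarev density theorem for function fields over $\mathbb F_q$ in an \emph{effective} form whose error term is uniform in $f$. Write $g_T(x):=f(x+T)+f(x)\in\mathbb F_q(T)[x]$; the $x^m$-terms cancel, so $g_T$ has some generic $x$-degree $d=d_f\le m-1$, and $d$ is even since $g_T$ is invariant under $x\mapsto x+T$. For $m$ in the families considered, $g_T(x)-S$ is automatically separable in $x$ over $\mathbb F_q(S,T)$: otherwise $g_T\in\mathbb F_q(T)[x^2]$, and substituting $w=x^{2^j}$ (a bijection of $\mathbb F_q$) would give $\delta(f)\le (m-1)/2<m-1$, contradicting maximality; likewise $d\ge2$, the value $d=0$ occurring only for $\mathbb F_q$-linearised $f$, which forces $m$ to be a power of $2$ and is a genuine exception to the statement (there $\delta(f)=q$ is attained only on the $q-1$ pairs $\beta=f(\alpha)$). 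Let $\Gamma=\{(x,\alpha,\beta)\in\mathbb A^3:f(x+\alpha)+f(x)=\beta,\ \alpha\neq0\}\cong\mathbb A^2_{x,\alpha}$, a geometrically irreducible surface, and let $\pi\colon\Gamma\to U:=\{\alpha\neq0\}\subset\mathbb A^2$, $(x,\alpha,\beta)\mapsto(\alpha,\beta)$; it is finite of generic degree $d$ and étale over $U$ minus a curve $Z$. For $(\alpha,\beta)\in(U\smallsetminus Z)(\mathbb F_q)$ one has $N(\alpha,\beta)=\sharp\operatorname{Fix}\big(\operatorname{Frob}_{(\alpha,\beta)}\big)$, where $\operatorname{Frob}_{(\alpha,\beta)}$ is the Frobenius conjugacy class of $\pi$, lying in the coset $cG$ of the geometric monodromy group $G\le S_d$ inside the arithmetic monodromy group $\hat G\le S_d$ (the Galois group of the splitting field of $g_T(x)-S$ over $\mathbb F_q(S,T)$), with $c\in\hat G$ mapping to the canonical generator of $\hat G/G$.

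The Chebotarev step is then as follows. By equidistribution of Frobenius, for $n$ large the classes $\operatorname{Frob}_{(\alpha,\beta)}$, as $(\alpha,\beta)$ runs over the étale $\mathbb F_q$-points of $U$, meet every conjugacy class inside $cG$; since the $O(q)$ points of $Z$ give $N\le d-1$, it follows that $\delta(f)=\max_{\sigma\in cG}\sharp\operatorname{Fix}(\sigma)$. Here we use the structural input — Voloch's theorem recalled above, extended by the paper's main results to \emph{all} $f$ of degree $m$ in the families — that $\delta(f)=d_f$; this is exactly the statement that the arithmetic and geometric monodromy coincide, i.e.\ $c\in G$, so that the maximising $\sigma$ may be taken to be $1\in\hat G=G$. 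Applying the effective function-field Chebotarev density theorem (a consequence of the Weil conjectures for the covers of $U$) to the conjugacy class $C=\{1\}$ gives
\[
\sharp\{(\alpha,\beta)\in U(\mathbb F_q):N(\alpha,\beta)=\delta(f)\}\ \geq\ \frac{q^{2}}{\sharp\hat G}-c_m\,q^{3/2}.
\]
Since $\hat G$ embeds in $S_d$ with $d\le m-1$, the bound $\sharp\hat G\le(m-1)!$ is independent of $f$, and with $\varepsilon_m:=\tfrac1{2(m-1)!}$ the right-hand side exceeds $\varepsilon_m\,2^{2n}$ once $2^n$ is larger than a constant depending only on $m$.

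The main obstacle is the uniformity in $f$: the estimate $\sharp\hat G\le(m-1)!$ is free, but the constant $c_m$ in the displayed inequality must not depend on the particular $f$ of degree $m$. This is handled by applying an effective (uniform) Lang–Weil estimate to the Galois-closure surface $\widehat\Gamma$ of $\pi$ — the normalisation of an appropriate component of the $d$-fold fibre product of $\Gamma$ over $U$ with pairwise distinct coordinates — together with the observation that $\widehat\Gamma$ is cut out by equations in an affine space whose dimension and whose degrees are bounded purely in terms of $m$, uniformly in $f$; the Lang–Weil constant then depends only on $m$, and hence so does the resulting threshold on $n$. The other delicate point is the structural input $\delta(f)=d_f$ for all $f$ of degree $m$ in the families, i.e.\ the identification $\hat G=G$ of the arithmetic and geometric monodromy (equivalently, that the maximal value is attained by the generic Frobenius rather than only on the branch curve $Z$); granting this — it is exactly what ``maximal differential uniformity for all degree-$m$ polynomials'' encodes — the argument above is complete.
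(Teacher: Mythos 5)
Your argument reaches the same conclusion as the paper's Theorem \ref{proof_conjecture} (and, like the paper, only for the restricted classes of $m$ where maximality of $\delta(f)$ is known for \emph{all} $f$ of degree $m$ — neither argument touches the conjecture for general $m>4$), but by a genuinely different route. The paper argues fibre by fibre in $\alpha$: it counts the $\alpha\in\mathbb F_{2^n}^{\ast}$ for which $L_{\alpha}f$ is Morse and $\Tr_{\mathbb F_{2^n}/\mathbb F_2}\bigl(b_1/(b_0\alpha^2)\bigr)=0$ (at least $\gamma_m2^n$ of them for any $\gamma_m<1/2$), and for each such $\alpha$ applies the one-variable Chebotarev theorem to the regular extension $\Omega/\mathbb F_q(t)$ of degree at most $d!\,2^{d-1}$ with $d=(m-1)/2$; this uses only point counts on curves and gives the explicit constant $\varepsilon_m<1/(d!\,2^{d+1})$. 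You instead work with the two-parameter family $f(x+T)+f(x)=S$ over $\mathbb F_q(S,T)$ and observe that the purely qualitative input $\delta(f)=m-1$ (Theorem \ref{Principal_7_mod_8}) already yields one \'etale, totally split fibre over a degree-one point, which forces the constant field of the splitting field of $g_T(x)-S$ to be $\mathbb F_q$, i.e.\ the Galois-closure surface is geometrically irreducible; a uniform Lang--Weil count then upgrades ``one maximal pair'' to ``$\varepsilon_m q^2$ maximal pairs'' with no monodromy computation at all. This is an attractive shortcut — it bypasses the Morse and regularity machinery of Sections \ref{Morse} and \ref{Regular} in the passage from Theorem \ref{Principal_7_mod_8} to the conjecture — at the price of a weaker constant ($1/(2(m-1)!)$ versus $1/(d!\,2^{d+1})$, and $(m-1)!\geqslant d!\,2^{d}$) and of needing surface rather than curve point counts. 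Two steps need more care than you give them. First, ``$\delta(f)=d_f$ is exactly the statement that the arithmetic and geometric monodromy coincide'' is an overstatement: only the implication you actually use is true, and it rests on the (short, but omitted) argument that an unramified $\mathbb F_q$-point with totally split fibre has all residue fields equal to $\mathbb F_q$, which must contain the field of constants of the Galois closure. Second, the normalisation $\widehat\Gamma$ need not be cut out by equations of degree bounded in terms of $m$; you should instead apply Lang--Weil to the components of the $d$-fold fibre product with distinct coordinates itself, whose degree is bounded by B\'ezout purely in terms of $m$ and which differs from its normalisation only above the branch curve $Z$, contributing $O_m(q)$ points. With these repairs the argument is sound.
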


Moreover, it was proved in \cite{Felipe} that all polynomials $f$ of degree 7 have maximal differential uniformity (that is here $\delta(f)=6$) if $n$ is large enough.

The aim of this paper is to exhibit an infinite 
set $\mathcal M$ (defined below) of 
integers $m$ such that
every polynomial $f\in{\mathbb F}_{2^n}[x]$ of degree $m$ has maximal differential uniformity  if $n$ is large enough, that is $\delta(f)$ is equal to the degree of $D_{\alpha}f(x)=f(x+\alpha)+f(x)$, the derivative of $f$ with respect to $\alpha$.
We stress that, for $m \in \mathcal M$, our results are much stronger than those of
\cite{Felipe} as we prove maximality of differential uniformity for all
polynomials of degree $m$, as opposed to most of them.

\begin{dfn}(Definition \ref{etlabete} and
Proposition \ref{proposition:condition_xm})
We  denote by $\mathcal M$ the set of the odd integers $m$
such that
the unique polynomial $g$ 
satisfying 
$g \left( x(x+1) \right) = D_{1}(x^m)$
has distinct critical values.

We have that
$m$ belongs to $\mathcal{M}$ if and only if
for any $\zeta_1$ and $\zeta_2$ 
in $\overline{\mathbb{F}}_2 \setminus \{ 1 \}$,
the equalities 
$\zeta_1^{m-1}  = \zeta_2^{m-1} = 
\left( \frac{1+\zeta_1}{1+\zeta_2}\right)^{m-1} = 1
$ imply $\zeta_1=\zeta_2$ or $\zeta_1=\zeta_2^{-1}$.
\end{dfn}

Now we can state our main results. 

\begin{thm}(Theorem \ref{Principal_7_mod_8} and Theorem \ref{proof_conjecture})
Let $m\in{\mathcal M}$ such that $m\equiv 7\pmod 8$.
Then for $n$ sufficiently large,
for all polynomials $f\in{\mathbb F}_{2^n}[x]$
of degree $m$ we have  $\delta(f)=m-1$.
Furthermore, Conjecture 1.1. is true for such integers $m$.
\end{thm}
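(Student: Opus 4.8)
The plan is to analyze, for a degree-$m$ polynomial $f\in\mathbb{F}_{2^n}[x]$ with $m\equiv 7\pmod 8$ and $m\in\mathcal M$, the number of solutions of $D_\alpha f(x)=\beta$. After the substitution $x\mapsto \alpha x$ (using $\alpha\neq 0$) and dividing by $\alpha^m$, the derivative $D_\alpha f(x)$ becomes, up to lower-order terms, a scalar multiple of $D_1(x^m)$ plus corrections coming from the lower-degree coefficients of $f$. Since $m$ is odd, $D_1(x^m)=x^m+(x+1)^m$ has degree $m-1$ and is a polynomial in $x(x+1)$; write $D_1(x^m)=g(x(x+1))$ for the unique $g$ of degree $(m-1)/2$ guaranteed by the definition of $\mathcal M$. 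The map $x\mapsto x(x+1)$ is the quotient by the involution $x\mapsto x+1$, so counting solutions of $D_\alpha f(x)=\beta$ is governed by the geometry of the curve $g(y)=$ (corrected constant), i.e. by the ramification and critical values of $g$. The condition $m\in\mathcal M$ says precisely that $g$ has distinct critical values, which is exactly the genericity hypothesis needed to make an absolutely irreducible curve argument work uniformly in $f$.

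\textbf{Main steps.} First I would reduce, via the change of variables above, to studying the fibered product / correspondence associated to $D_\alpha f(x)=D_\alpha f(x')$ and show that for $n$ large the relevant curve $C_{\alpha,\beta}\colon D_\alpha f(x)=\beta$, or rather the curve parametrizing pairs with equal derivative value, is absolutely irreducible of controlled genus, with the control \emph{independent of the lower coefficients of $f$} — this is where the hypothesis $m\equiv 7\pmod 8$ enters, presumably to guarantee that the leading behaviour forces $\gcd$-type conditions ($m-1\equiv 6\pmod 8$, so $4\nmid m-1$ and $2\parallel m-1$...) that make the monodromy group of $g$ as large as possible (full symmetric group on the $(m-1)/2$ sheets), given that the critical values are distinct. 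Second, apply the Weil bound (Lang–Weil for the family) to conclude that $C_{\alpha,\beta}$ has roughly $2^n$ points over $\mathbb{F}_{2^n}$ for $n$ large, hence in particular is non-empty with the "expected" number of affine points; by a counting/averaging argument over $\beta$, for many $\beta$ the fiber $D_\alpha f^{-1}(\beta)$ meets $\mathbb{F}_{2^n}$ in exactly $\deg(D_\alpha f)=m-1$ points, giving $\delta(f)=m-1$. Third, to get the conjecture, run the same Lang–Weil estimate quantitatively: the number of pairs $(\alpha,\beta)$ for which the fiber has full size $m-1$ is shown to be at least $\varepsilon_m 2^{2n}$ by estimating, for each fixed $\alpha$, the number of $\beta$ with a totally split fiber — this is a Chebotarev-type count for the splitting of $g$ composed with the quotient map, and the distinctness of critical values makes the relevant Galois group and its subgroup fixing a totally-split $\beta$ explicit, yielding a positive proportion $\varepsilon_m$.

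\textbf{Key obstacle.} The main difficulty is establishing absolute irreducibility (and the genus bound) \emph{uniformly over all polynomials $f$ of degree $m$}: the lower-order coefficients of $f$ perturb the curve, and one must show that no specialization of these coefficients can cause the curve to become reducible or its genus to blow up in a way that defeats Weil. I expect this to be handled by showing that the Galois group / monodromy of the generic fiber (over the field of rational functions in the coefficients of $f$ and in $\alpha$) is the full symmetric group $S_{m-1}$ acting on the $m-1$ roots — here the congruence $m\equiv 7\pmod 8$ together with $m\in\mathcal M$ (distinct critical values of $g$, plus the ramification of the quotient map $x\mapsto x(x+1)$ at the two points above $\infty$ and the simple ramification coming from $g$) should force, via the classical criterion that a transitive permutation group containing a transposition and acting primitively is symmetric, the monodromy to be $S_{m-1}$. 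Specializing, the generic irreducibility then persists for all $f$ outside a proper closed subset, and a separate (easier) direct argument, or a more careful version of the monodromy computation, handles the remaining $f$; the congruence mod $8$ is exactly what rules out the sporadic degenerations that occur for other residues.
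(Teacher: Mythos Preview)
Your proposal has the right general flavor (monodromy/Chebotarev, Weil-type counting) but misidentifies the two key mechanisms, and the strategy as written would not close.

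\textbf{Where the congruence $m\equiv 7\pmod 8$ actually enters.} It has nothing to do with forcing the monodromy of $g$ to be large. The monodromy computation is two-tiered: first one shows that $L_\alpha f$ is Morse (odd degree $d=(m-1)/2$, nondegenerate critical points, distinct critical values), which gives $\Gal(F/\mathbb{F}_q(t))=\mathfrak{S}_d$; then one builds the Artin--Schreier tower $\Omega=F(x_0,\dots,x_{d-1})$ with $x_i^2+\alpha x_i=u_i$ and shows $\Gal(\Omega/F)=(\mathbb{Z}/2\mathbb{Z})^{d-1}$, \emph{provided} the equation $x^2+\alpha x=b_1/b_0$ is solvable in $\mathbb{F}_{2^n}$, i.e.\ $\Tr_{\mathbb{F}_{2^n}/\mathbb{F}_2}\!\bigl(b_1/(b_0\alpha^2)\bigr)=0$. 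The congruence mod $8$ controls the explicit formula for $b_1/b_0$: for $m\equiv 7\pmod 8$ one gets $b_1/b_0=(a_1\alpha+a_2)/a_0$, so the trace condition becomes $\Tr\bigl((a_1^2+a_0a_2)/(a_0^2\alpha^2)\bigr)=0$, which holds for $2^{n-1}-1$ values of $\alpha$ regardless of the parity of $n$. For $m\equiv 3\pmod 8$ an extra $\alpha^2$ appears in $b_1/b_0$ and the trace picks up $\Tr(1)=n\bmod 2$, which is exactly why that case requires $n$ even or $a_1^2+a_0a_2\neq 0$. Your speculation about ``$\gcd$-type conditions'' and primitivity is off the mark, and the target group is not $\mathfrak{S}_{m-1}$ but rather $(\mathbb{Z}/2\mathbb{Z})^{d-1}\rtimes\mathfrak{S}_d$.

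\textbf{The uniformity over $f$.} Your plan is to prove irreducibility generically in the coefficients of $f$ and then handle exceptional $f$ separately; this is precisely the approach of the earlier papers that only got ``most $f$'', and there is no ``easier direct argument'' waiting for the exceptional $f$. The paper's point is to invert the quantifiers: \emph{fix} $f$ arbitrary and \emph{vary} $\alpha$. One shows that the set of $\alpha$ for which $L_\alpha f$ fails to be Morse has size bounded by a polynomial in $m$ alone (roughly $m(m-3)$ for condition (a), $(5m-1)(m-3)(m-7)/64$ for condition (b)); this is where $m\in\mathcal{M}$ is used, to make the relevant resultant in $\mathbb{F}_2[a_0,\dots,a_m,\alpha]$ a nonzero polynomial in $\alpha$ of that degree. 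Since the trace condition cuts out $\sim 2^{n-1}$ good $\alpha$, for $n$ large the two sets intersect for \emph{every} $f$. Once $\Omega/\mathbb{F}_q(t)$ is regular, Chebotarev gives $\beta$ with $D_\alpha f(x)+\beta$ totally split, and the quantitative version yields $\varepsilon_m$.
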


For example, we will prove that the previous theorem applies for the 
integers $m\in\{7, 23, 39, 47, 55, 79, 87, 95, 111, 119, 135, 143, 159, 167, 175, 191, 199\}$ (see Example \ref{list_integers_m}).
We also provide  explicit infinite families of such integers $m$, namely the integers
$m=2\ell^{2k+1}+1$ for $k\geqslant 0$ and 
  $\ell \in \{ 3,11,19,23,43,47, \break 59,67,71,79,83,  103, 107,131,139,151,163,167,179,191,199 
 \}$ (see Corollary \ref{First_family}).

When $m$ is congruent to $3$ modulo $8$, we also obtain some results but
we have conditions on the parity of $n$ or 
we have to remove some polynomials.

\begin{thm}(Theorem \ref{Principal_3_mod_8})

Let  $m\in{\mathcal M}$ such that  $m\geqslant 7$ and $m\equiv 3\pmod 8$.

\begin{enumerate}[label=(\roman*)]
\item For $n$ even and sufficiently large and for all polynomials $f\in{\mathbb F}_{2^n}[x]$
of degree $m$ we have $\delta(f)=m-1$.

\item For $n$ sufficiently large and for all polynomials $f=\sum_{i=0}^{m} a_{m-i} x^i$ 
in ${\mathbb F}_{2^n}[x]$ of degree $m$ such that $a_1^2+a_0a_2 \neq 0$, we have $\delta(f)=m-1$.
\end{enumerate}

\end{thm}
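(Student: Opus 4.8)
The plan is to reduce everything to the case $\alpha=1$, as is standard for differential uniformity over a field of characteristic $2$: by replacing $f(x)$ with $f(\alpha x)$ (which does not change the degree nor $\delta$) we may always assume $\alpha=1$, so that $D_1 f(x) = f(x+1)+f(x)$ has degree $m-1$, and we must show that for $n$ large the equation $D_1 f(x) = \beta$ has a solution with exactly $m-1$ roots for at least one $\beta$ (part (i) and the first conclusion of (ii)), and in fact for a positive proportion of pairs $(\alpha,\beta)$ (Conjecture 1.1). The key structural input is that $D_1 f(x)$ is invariant under $x \mapsto x+1$, hence descends to a polynomial in $y = x(x+1)$: there is a unique $h \in \mathbb{F}_{2^n}[y]$ with $h(x(x+1)) = D_1 f(x)$, and $\deg h = (m-1)/2$. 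Counting roots of $D_1 f(x)=\beta$ with multiplicity $m-1$ is the same as asking that $h(y) - \beta$ split into $(m-1)/2$ distinct linear factors, each giving two distinct $x$-values; so the question becomes a question about the splitting field and Galois group of $h(y)-\beta$, equivalently about the curve $C_\beta : h(y) = \beta$ and its Galois closure.

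The next step is to analyse the ramification of the map $y \mapsto h(y)$, or rather of the composite $x \mapsto D_1 f(x)$, whose critical values control when $D_1 f(x) - \beta$ has repeated roots. Write $f = \sum_{i=0}^m a_{m-i} x^i$. The leading behaviour of $h$ is governed by $x^m$, whose derivative $D_1(x^m) = g(x(x+1))$ has, by hypothesis $m \in \mathcal{M}$, distinct critical values; but the lower-order terms $a_1 x^{m-1} + a_2 x^{m-2} + \cdots$ perturb this. The congruence $m \equiv 3 \pmod 8$ enters precisely here: it controls the behaviour at the point at infinity and the structure of the "extra" critical point coming from the quadratic part, which is the source of the condition $a_1^2 + a_0 a_2 \neq 0$. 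I would show that, generically in $\beta$, the polynomial $D_1 f(x) - \beta$ has distinct roots (Bertini/Krasner-type argument), and that its Galois group over $\overline{\mathbb{F}}_2(\beta)$ — hence the geometric monodromy group of the cover $x \mapsto D_1 f(x)$ — is as large as possible given the $x\mapsto x+1$ symmetry, namely a wreath-type group $S_{(m-1)/2} \ltimes (\mathbb{Z}/2)^{(m-1)/2}$ acting on the $m-1$ sheets, or at least a subgroup containing a transposition of two sheets lying over the same $y$-sheet and an $(m-1)/2$-cycle on the $y$-sheets. The distinctness-of-critical-values hypothesis ($m\in\mathcal M$) is exactly what is needed to force the monodromy on the $y$-side to be the full symmetric group $S_{(m-1)/2}$; combined with the $(\mathbb{Z}/2)^{(m-1)/2}$ coming from the double cover $x\mapsto y$, one gets the full wreath product. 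For part (i), the parity assumption on $n$ guarantees that $\mathbb{F}_{2^n}$ contains enough roots of unity to realise the "sign" element correctly; for part (ii), the condition $a_1^2+a_0a_2\neq 0$ removes exactly the degenerate polynomials for which the extra ramification point collides and the monodromy drops.

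Once the geometric monodromy group $G$ is pinned down, the conclusion is a Chebotarev/Lang–Weil argument over $\mathbb{F}_{2^n}$: the number of $\beta \in \mathbb{F}_{2^n}$ for which $D_1 f(x)-\beta$ splits completely with distinct roots (so that $\#\{x : D_1 f(x) = \beta\} = m-1$, forcing $\delta(f) \geq m-1$, hence $= m-1$ since $\deg D_1 f = m-1$) is, up to an error $O(2^{n/2})$ with constant depending only on $m$, equal to $\frac{|\{g \in G : g = \mathrm{id}\}|}{|G|} \cdot 2^n = \frac{1}{|G|} 2^n$, which is positive for $n$ large — in fact this already gives a positive proportion of $\beta$, and carrying $\alpha$ along gives a positive proportion of pairs $(\alpha,\beta)$, which is Conjecture 1.1. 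The main obstacle I anticipate is the monodromy computation: proving that the lower-order coefficients $a_1,\dots,a_m$ cannot conspire (except in the excluded cases) to make the cover $x\mapsto D_1 f(x)$ have smaller-than-expected monodromy — i.e.\ that the critical values of the perturbed polynomial remain distinct (or at worst collide in a controlled way that still leaves a transposition in the inertia), uniformly over all such $f$ and all large $n$. This is where the hypothesis $m \in \mathcal{M}$ together with the mod-$8$ congruence must be used in an essential and somewhat delicate way, presumably by an explicit analysis of the Newton polygon / local behaviour of $D_1 f$ at its ramification points and at infinity. The separate treatment of $n$ even versus the excision of $\{a_1^2+a_0a_2=0\}$ strongly suggests that for $n$ odd the element of $G$ that one needs to be trivial under Frobenius is a genuine obstruction coming from a non-square in $\mathbb{F}_{2^n}$ attached to that discriminant-like quantity, which is the reason the clean statement (i) needs $n$ even.
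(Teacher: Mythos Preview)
Your overall framework---factoring $D_\alpha f$ through $L_\alpha f(x(x+\alpha))$, aiming for the wreath-type monodromy $\Sgoth_d \ltimes (\mathbb{Z}/2\mathbb{Z})^d$ with $d=(m-1)/2$, and finishing with Chebotarev---matches the paper's strategy. But the reduction to $\alpha=1$ is a genuine gap, and it leads you to misread where the conditions $a_1^2+a_0a_2\ne 0$ and ``$n$ even'' actually enter.

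Fixing $\alpha=1$ amounts to proving the \emph{stronger} claim that for every $f$ of degree $m$ the polynomial $L_1 f$ already has maximal monodromy. This is false in general: the hypothesis $m\in\mathcal M$ says only that $L_1(x^m)$ has distinct critical values, not that $L_1 f$ does for every $f$. For many $f$ the polynomial $L_1 f$ is not Morse, the first-floor group $\Gal(F/\mathbb{F}_q(t))$ can be strictly smaller than $\Sgoth_d$, and the regularity needed for Chebotarev can fail. The paper's essential manoeuvre (Theorem~\ref{L_alpha_Morse}) is precisely to \emph{keep $\alpha$ free}: for each fixed $f$, one shows that $L_\alpha f$ is Morse for all but $O(m^3)$ values of $\alpha\in\mathbb F_{2^n}^\ast$, and then intersects this set of good $\alpha$ with a second constraint.

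That second constraint is where $a_1^2+a_0a_2$ and the parity of $n$ appear, and it is not a ramification/monodromy condition at all. It is the Artin--Schreier obstruction governing the \emph{second} floor: by Proposition~\ref{Galois_second_floor}, the tower $\Omega/F$ is regular iff $x^2+\alpha x=b_1/b_0$ is solvable in $\mathbb F_{2^n}$, i.e.\ iff $\Tr_{\mathbb F_{2^n}/\mathbb F_2}\bigl(b_1/(b_0\alpha^2)\bigr)=0$. For $m\equiv 3\pmod 8$, Lemma~\ref{lemma:b1overb0} gives $b_1/b_0=\alpha^2+(a_1\alpha+a_2)/a_0$, so
\[
\Tr\Bigl(\frac{b_1}{b_0\alpha^2}\Bigr)=\Tr(1)+\Tr\Bigl(\frac{a_1^2+a_0a_2}{a_0^2\alpha^2}\Bigr)\equiv n+\Tr\Bigl(\frac{a_1^2+a_0a_2}{a_0^2\alpha^2}\Bigr)\pmod 2.
\]
If $a_1^2+a_0a_2\neq 0$, then as $\alpha$ runs over $\mathbb F_{2^n}^\ast$ the right-hand trace hits each value about $2^{n-1}$ times, so for any parity of $n$ there remain $\geqslant 2^{n-1}-O(m^3)$ values of $\alpha$ satisfying both the Morse and the trace condition; this is part~(ii). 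If $a_1^2+a_0a_2=0$, the trace is identically $n\pmod 2$ and one is forced to take $n$ even; this is part~(i). The obstruction is therefore additive (an Artin--Schreier trace), not the multiplicative ``non-square/roots of unity'' phenomenon you describe.
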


We  also provide infinite families of integers $m\equiv 3\pmod 8$ for which the previous theorem applies, 
namely the integers 
$m=2\ell^{k}+1$ for $k\geqslant 1$ and
  $\ell \in \{ 17,41,97,113,137,193
 \}$ and the integers 
$m=2\ell^{2k}+1$ for $k\geqslant 1$ and
  $\ell \in \{
 23,47,71,79,103,151,167,191,199
 \}$ (see  Corollary \ref{Second_family}).

\bigskip

Let us explain the strategy of the proofs of the above theorems
 which 
has 
 important similarities to that of \cite{Felipe} and \cite{YvesFabien}.
 For simplicity we consider in this sketch
 the case where $m$ is congruent to $7$ modulo $8$.

If $f \in {\mathbb F}_{q}[x]$ is a polynomial of degree $m$
 and if  $\alpha \in{\mathbb F}_{q}^{\ast}$, we introduce  
the unique polynomial $L_{\alpha}f$ 
of degree $d=(m-1)/2$
such that
 $L_{\alpha}f \left( x(x+\alpha)\right)=D_{\alpha}f(x)$
 (see Proposition \ref{polynomeg}).  
We consider  the splitting field  $F$ of the
polynomial $L_{\alpha}f(x)-t$ over the field ${\mathbb F}_q(t)$ with $t$ transcendental over  ${\mathbb F}_q$
and set  ${\mathbb F}_q^{F}$ be the algebraic closure of  $\mathbb F_q$ in $F$.
The Galois groups $G=\Gal(F/{\mathbb F}_q(t))$ and $\overline G=\Gal(F/{\mathbb F}_q^{F}(t))$ are respectively  the arithmetic and geometric monodromy groups of $L_{\alpha}f$.

If $u_0,\ldots,u_{d-1}$ are the roots of $L_{\alpha}f(x)=t$, then 
we will denote by $x_i$
a root of
$x^2+ \alpha x=u_i$.
%for $i=0,\ldots,d-1$.
So the $2d$ elements $x_0,x_0+\alpha,\ldots,x_{d-1},x_{d-1}+ \alpha$
are the solutions of
$D_{\alpha}f(x)=t$.
Thus we consider $\Omega={\mathbb F}_q(x_0,\ldots, x_{d-1})$ 
 the compositum of the fields $F(x_i)$ and  ${\mathbb F}_q^{\Omega}$ 
the algebraic closure of ${\mathbb F}_q$ in $\Omega$.
We set  also $\Gamma=\Gal(\Omega/F)$ and $\overline\Gamma=\Gal(\Omega/F{\mathbb F}_q^{\Omega})$.
Then we have the following diagram:

$$
\begin{tikzpicture}[node distance=1.5cm]
 
 \node (Fqdet)          {${\mathbb F}_q(t)$};
 \node (vide) [above of=Fqdet] {};
\node (F)   [above of=vide]   {$F={\mathbb F}_q(u_0,\ldots, u_{d-1})$};
 \node (FqFdet) [right of=vide] {${\mathbb F}_q^{F}(t)$};
 
  \node (vide2) [above of=F] {};
\node (Omega)   [above of=vide2]   {$\Omega={\mathbb F}_q(x_0,\ldots, x_{d-1})$};
 \node (FqOmegadeF) [right of=vide2] {$F{\mathbb F}_q^{\Omega}$};
% \node(equality) [right of=FqOmegadeF] {$  = {\mathbb F}_q^{\Omega}(t)$};
  
  \draw (Fqdet) to node[left, midway,scale=0.9]  {$G$} (F);
  \draw (Fqdet)--(FqFdet);
  \draw (FqFdet) to node[right, midway,scale=0.9]  {\ $\overline G$} (F);
  
  \draw (F) to node[left, midway,scale=0.9]  {$\Gamma$} (Omega);
  \draw (F)--(FqOmegadeF);
  \draw (FqOmegadeF) to node[right, midway,scale=0.9]  {\ $\overline \Gamma$} (Omega);
  \end{tikzpicture}
$$
When the integer $m$ belongs to $\mathcal{M}$ and is congruent to $7$ modulo $8$
 we prove that for $n$ sufficiently large and
 for any polynomial $f \in {\mathbb F}_{2^n}[x]$ of degree $m$, 
  there exists  $\alpha$ in ${\mathbb F}_{2^n}^{\ast}$ such that:
 \begin{enumerate}
 \item  $L_{\alpha}f$ is Morse
\item  the equation $x^2+ \alpha x = \frac{b_1}{b_0}$ has a solution in $\mathbb{F}_{2^n}$.
\end{enumerate}

Now, condition (1) implies by Proposition \ref{Monodromy} that the extension $F/{\mathbb F}_{q}(t)$ is regular.
Condition (1) and (2) imply by Proposition \ref{Galois_second_floor} that the extension $\Omega/F$ is regular.
 It enables us to apply Chebotarev density theorem (see Proposition \ref{application_Chebotarev}) to obtain, for $n$ sufficiently large depending only on $m$, the existence of $\beta\in {\mathbb F}_{2^n}$ such that the polynomial 
$D_{\alpha}f(x)+\beta$ 
splits in $ \mathbb{F}_{2^n}[x]$ with no repeated factors. The differential uniformity of $f$ is thus equal to the degree of $D_{\alpha}f$.

\bigskip
The paper is organized as follows.
 Section \ref{section_nabla}   is devoted to the study of the  operator $L_{\alpha}$.
Section \ref{Morse} provides a detailed exposition  of Morse polynomials in even characteristic. According to the appendix by Geyer in \cite{JardenRazon}, 
Morse polynomials in this context are polynomials of odd degree satisfying two conditions: their critical points are non degenerate and their critical values are distinct. 
The first condition leads to the study of the number of $\alpha$
 such that the resultant of the derivative $(L_{\alpha}f)'$ with the second Hasse-Schmidt derivative
  $(L_{\alpha}f)^{[2]}$ does not vanish (Proposition \ref{proposition:condition_a}).
We give upper bounds for the number of exceptions in terms of $m$.

By contrast, we need additional requirements on $m$ to guarantee 
that for enough $\alpha$ the polynomial $L_{\alpha}f$ has distinct critical values
(see Proposition \ref{proposition:condition_b}).
Precisely, we will make the assumption that $L_{1}(x^m)$ has distinct critical values, 
this is that $m$ belongs to $\mathcal{M}$ (Definition \ref{etlabete}).
We complete Section 3 by exhibiting some families of infinitely
many integers belonging to $\mathcal M$.

Section \ref{Regular} is devoted to the study of the Galois groups $G$, $\overline G$, $\Gamma$ and $\overline\Gamma$. We prove in Proposition \ref{Galois_second_floor} that if the equation $x^2+ \alpha x = \frac{b_1}{b_0}$ has a solution in $\mathbb{F}_{2^n}$ i.e. if
${\Tr}_{{\mathbb F}_{2^n}/{\mathbb F}_2}  \left( \frac{b_1}{b_0\alpha^2} \right)=0$ then the extension $\Omega/F$ is regular.
The different expressions of $b_1/b_0$ 
we have obtained
in Lemma \ref{lemma:b1overb0},
depending on the congruence of $m$ modulo $8$,
induce differences in the treatment.

Section \ref{Main} deals with   the Chebotarev density theorem and contains the statements and the proofs of the main results.
\bigskip

Let us stress 
the main difference  between  the common approach of  \cite{Felipe} and \cite{YvesFabien} and   the approach of the present paper. For simplicity, we consider again that $m\equiv 7\pmod 8$. In  \cite{Felipe} and \cite{YvesFabien}, one of the key steps is to fix $\alpha_1,\ldots,\alpha_k$ in ${\mathbb F}_{2^n}$ and to obtain a lower bound  depending on $n$ for the number of polynomials $f$ in ${\mathbb F}_{2^n}[x]$ such that 
at least one of the $L_{\alpha_i}f$ is Morse.
By contrast, we prove here that for $n$ sufficiently large and for any polynomial $f$ of degree $m$ in ${\mathbb F}_{2^n}[x]$ there exists $\alpha$ such that $L_{\alpha}f$ is Morse.

\section{The associated polynomial $L_{\alpha}f$}\label{section_nabla}
Let $f \in {\mathbb F}_q[x]$ be a polynomial of degree $m \geqslant 7$  (the cases where $m<7$ are handled in \cite{Felipe})  and $\alpha\in{\mathbb F}_q^{\ast}$.
The derivative of a polynomial $f\in{\mathbb F}_q[x]$
along $\alpha$ is defined by:
$$D_{\alpha}f(x)=f(x)+f(x+\alpha).$$

If we set $f=\sum_{k=0}^{m} a_{m-k} x^k$,
a straightforward computation gives that 
$D_{\alpha}f= \sum_{k=0}^{m} c_{m-k} x^k$
where $c_k=a_{k}+\sum_{i=m-k}^{m} a_{m-i} \binom{i}{m-k} \alpha^{i-m+k}$.
As we work over an even characteristic field,
we have
$ c_0=a_0 + a_0=0$,
$c_1=m \alpha a_0$ and $ c_2=(m-1) \alpha a_1 + \binom{m}{2} \alpha^2 a_0.$
We deduce the following proposition.

\begin{proposition}\label{degree_of_g}
Let $f\in{\mathbb F}_q[x]$ be a polynomial of degree $m$. 
If $m$ is odd then  the degree of $D_{\alpha}f$ is $m-1$.
If $m$ is even then the degree of $D_{\alpha}f$ is less than or equal to $m-2$, 
and equal to $m-2$ if and only if $a_1+a_0 \alpha \binom{m}{2} \neq 0$.
\end{proposition}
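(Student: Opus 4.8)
The plan is to read off the coefficients of $D_\alpha f$ in degrees $m$, $m-1$ and $m-2$ from the expansion displayed just before the statement, $D_\alpha f=\sum_{k=0}^m c_{m-k}x^k$ with $c_k=a_k+\sum_{i=m-k}^m a_{m-i}\binom{i}{m-k}\alpha^{i-m+k}$, and to exploit that the characteristic is $2$ together with the standing hypotheses $\deg f=m$ (so $a_0\neq 0$) and $\alpha\in\mathbb F_q^\ast$ (so $\alpha\neq 0$). The three relevant values $c_0=0$, $c_1=m\alpha a_0$ and $c_2=(m-1)\alpha a_1+\binom{m}{2}\alpha^2a_0$ have already been computed in the paragraph preceding the proposition, so the argument just consists in interpreting them.

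First I would observe that $c_0=0$ holds unconditionally — this is exactly where characteristic $2$ enters, since $a_0+a_0=0$ — so that $\deg D_\alpha f\leq m-1$ in all cases. If $m$ is odd, then its image in $\mathbb F_2\subset\mathbb F_q$ is $1$, hence $c_1=m\alpha a_0=\alpha a_0\neq 0$ because $\alpha\neq 0$ and $a_0\neq 0$; this gives $\deg D_\alpha f=m-1$.

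If $m$ is even, then its image in $\mathbb F_2$ is $0$, so $c_1=m\alpha a_0=0$ and therefore $\deg D_\alpha f\leq m-2$. Since $m-1$ is then odd, $c_2=(m-1)\alpha a_1+\binom{m}{2}\alpha^2a_0=\alpha\bigl(a_1+a_0\alpha\binom{m}{2}\bigr)$, and as $\alpha\neq 0$ this vanishes precisely when $a_1+a_0\alpha\binom{m}{2}=0$. Hence $\deg D_\alpha f=m-2$ if and only if $a_1+a_0\alpha\binom{m}{2}\neq 0$, as claimed.

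I do not expect any real obstacle: the only point requiring care is keeping track of which of the integer coefficients $m$, $m-1$ and $\binom{m}{2}$ occurring in $c_1$ and $c_2$ reduce to $0$ or to $1$ modulo $2$, and in particular $\binom{m}{2}$ can be either, which is why it is left symbolic in the statement. One could instead bypass the displayed formula and expand $(x+\alpha)^m$ and $(x+\alpha)^{m-1}$ directly in $D_\alpha f(x)=f(x)+f(x+\alpha)$, but this would only repackage the same short computation.
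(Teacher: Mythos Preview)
Your argument is correct and is exactly the deduction the paper intends: the paper does not give a separate proof but simply writes ``We deduce the following proposition'' immediately after displaying $c_0=0$, $c_1=m\alpha a_0$ and $c_2=(m-1)\alpha a_1+\binom{m}{2}\alpha^2 a_0$. You have spelled out precisely that deduction.
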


In the whole paper, we will associate to any integer $m$ the following integer $d$.

\begin{definition}\label{def:d}
 Let $m$ be an integer. Suppressing in our notation the dependence on $m$, we set  $d=(m-1)/2$ if $m$ is odd and $d=(m-2)/2$ is $m$ is even.
 \end{definition}

\subsection{Existence of $L_{\alpha}f$}

\begin{proposition}\label{polynomeg}
 Let $\alpha\in{\mathbb F}_q^{\ast}$ and
let $f\in{\mathbb F}_q[x]$ be a polynomial of degree $m$. Then there exists a 
unique polynomial $g\in{\mathbb F}_q[x]$ 
of degree less than or equal to
 $d$  such that
$$D_{\alpha}f(x)=g(x(x+\alpha)).$$

Furthermore, the map $L_{\alpha}:f\longmapsto g$ is linear and 
its restriction   to the subspace of polynomials
of degree at most $m$  is surjective onto the subspace
of polynomials of degree at most $d$.
\end{proposition}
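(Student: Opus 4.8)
The plan is to observe first that the substitution $x\mapsto x+\alpha$ fixes the polynomial $x(x+\alpha)$, so any expression of the form $g(x(x+\alpha))$ is invariant under $x\mapsto x+\alpha$; conversely, $D_\alpha f(x)=f(x)+f(x+\alpha)$ is easily seen to be invariant under this substitution since in characteristic $2$ we have $D_\alpha f(x+\alpha)=f(x+\alpha)+f(x+2\alpha)=f(x+\alpha)+f(x)=D_\alpha f(x)$. So the natural strategy is: (i) show that the ring of polynomials invariant under $x\mapsto x+\alpha$ is exactly ${\mathbb F}_q[x(x+\alpha)]$, i.e. that $y:=x(x+\alpha)$ generates the invariants; (ii) deduce existence of $g$ with $D_\alpha f(x)=g(y)$; (iii) bound $\deg g$ by comparing degrees in $x$; (iv) prove uniqueness and linearity, which are both immediate; (v) prove surjectivity of $L_\alpha$ restricted to $\deg\le m$ onto $\deg\le d$ by a dimension count.

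For step (i), I would argue as follows. The map $\sigma:x\mapsto x+\alpha$ is an order-$2$ automorphism of ${\mathbb F}_q[x]$, and ${\mathbb F}_q[x]$ is a free module of rank $2$ over the fixed subring $R$ with basis $\{1,x\}$ (this is the standard fact for a degree-$2$ extension; one can also see it concretely since $x$ satisfies $X^2+\alpha X+y=0$ over ${\mathbb F}_q[y]$, a monic quadratic). Since ${\mathbb F}_q[y]\subseteq R$ and ${\mathbb F}_q[x]$ has rank $2$ over both ${\mathbb F}_q[y]$ and over $R$, we get $R={\mathbb F}_q[y]$. Concretely one writes any $h\in{\mathbb F}_q[x]$ uniquely as $h=h_0(y)+x\,h_1(y)$ (perform Euclidean division by $X^2+\alpha X+y$), and then $h$ is $\sigma$-invariant iff $x\,h_1(y)=(x+\alpha)h_1(y)$ iff $\alpha h_1(y)=0$ iff $h_1=0$, since $\alpha\ne 0$. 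This gives existence of $g\in{\mathbb F}_q[x]$ with $D_\alpha f(x)=g(x(x+\alpha))$, and uniqueness as well (if $g_1(y)=g_2(y)$ as polynomials in $x$ then $g_1=g_2$, because $y$ is transcendental over ${\mathbb F}_q$, or simply by degree considerations). Linearity of $f\mapsto g$ follows since $D_\alpha$ is linear and the assignment $g(y)\leftrightarrow g(x(x+\alpha))$ is a ${\mathbb F}_q$-linear bijection between ${\mathbb F}_q[y]$ and its image.

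For step (iii), the degree bound: if $\deg_x g(x(x+\alpha))=\deg D_\alpha f\le m-1$ and $\deg g=e$, then $\deg_x g(x(x+\alpha))=2e$ because $x(x+\alpha)$ has degree $2$ with leading coefficient $1$, so $2e\le m-1$, giving $e\le (m-1)/2$; when $m$ is even, Proposition \ref{degree_of_g} gives $\deg D_\alpha f\le m-2$, so $2e\le m-2$, i.e. $e\le (m-2)/2$. In both cases $e\le d$. Finally, for surjectivity of the restriction of $L_\alpha$: the source is the space of polynomials of degree $\le m$, which has dimension $m+1$, and $L_\alpha$ kills exactly the polynomials $f$ with $D_\alpha f=0$, i.e. the $\sigma$-invariant ones among degree $\le m$, namely ${\mathbb F}_q[y]$ truncated to degree $\le m$ in $x$ — that kernel has dimension $d+1$ (it consists of $h_0(y)$ with $\deg_x\le m$, i.e. $\deg_y h_0\le d$). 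Hence the image has dimension $(m+1)-(d+1)=m-d$, which equals $d+1$ when $m$ odd and $d+1$ when $m$ even as well (check: $m$ odd, $m-d = m-(m-1)/2=(m+1)/2=d+1$; $m$ even, $m-d=m-(m-2)/2=(m+2)/2=d+1$). Since the image lies in the space of polynomials of degree $\le d$, which also has dimension $d+1$, the restriction is surjective onto it.

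The main obstacle, such as it is, is being careful about the kernel computation in the surjectivity argument: one must correctly identify which $\sigma$-invariant polynomials have degree $\le m$ in $x$ and verify the dimension count matches $d+1$ in both parity cases; everything else is routine. An alternative to the dimension count would be to exhibit explicit preimages, e.g. to show $L_\alpha(x^{2j+1})$ or suitable combinations have degree exactly $j$ for $0\le j\le d$, but the dimension argument is cleaner and avoids combinatorial identities with binomial coefficients in characteristic $2$.
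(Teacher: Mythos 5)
Your argument is correct in substance but takes a genuinely different route from the paper for the existence/uniqueness part. The paper (following Proposition 2.2 of \cite{YvesFabien}) works with the factorization of $D_{\alpha}f$ over $\overline{\mathbb F}_2$ and the observation that $x\mapsto x+\alpha$ permutes the set $\Lambda_k$ of roots of each multiplicity $k$, so that the roots pair up and $D_{\alpha}f$ descends along $T_{\alpha}(x)=x(x+\alpha)$. You instead identify the full invariant ring of $\sigma:x\mapsto x+\alpha$ as ${\mathbb F}_q[y]$ with $y=x(x+\alpha)$, via the decomposition ${\mathbb F}_q[x]={\mathbb F}_q[y]\oplus x\,{\mathbb F}_q[y]$ coming from division by the monic quadratic $X^2+\alpha X+y$. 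Your route is arguably cleaner and more self-contained: it gives existence, uniqueness and the identification of $\ker L_{\alpha}$ in one stroke, whereas the paper outsources both steps to \cite{YvesFabien}. The surjectivity argument (kernel identification plus dimension count) is essentially the one the paper cites from Lemma 2.3 of \cite{YvesFabien}.

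One arithmetic slip needs fixing in the even-degree case of your dimension count. The $\sigma$-invariant polynomials of $x$-degree at most $m$ are the $h_0(y)$ with $\deg_y h_0\le\lfloor m/2\rfloor$, so the kernel has dimension $\lfloor m/2\rfloor+1$. For $m$ odd this is $d+1$ and your computation is correct. For $m$ even, however, $\lfloor m/2\rfloor=m/2=d+1$, so the kernel has dimension $d+2$, not $d+1$ as you state; correspondingly $m-d=(m+2)/2$ equals $d+2$, not $d+1$. Your two errors compensate: the correct image dimension is $(m+1)-(d+2)=d+1$, which still matches the dimension of the target space, so surjectivity holds. But as written your intermediate claims are inconsistent with each other (a kernel of dimension $d+1$ would force an image of dimension $d+2$ inside a space of dimension $d+1$). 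Replace ``$\deg_y h_0\le d$'' by ``$\deg_y h_0\le\lfloor m/2\rfloor$'' and redo the two-line count in each parity case.
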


\begin{proof}
The proof is similar to that of Proposition 2.2. of \cite{YvesFabien} dealing with the set $\Lambda_{k}$  of roots
of multiplicity $k$ of $D_{\alpha}f$ and noticing that $x \mapsto x + \alpha$ is an involution
of each set $\Lambda_{k}$.
The surjectivity of $L_{\alpha}$ follows from the fact that 
the kernel of the  restriction of $L_{\alpha}$ to 
the space of  polynomials of degree at most $m$  is  
the subspace of  polynomials $g \left( x(x+ \alpha) \right)$
where $g \in {\mathbb F}_q[x]$ has degree at most $[m/2]$
(see Lemma 2.3. of \cite{YvesFabien}).
\end{proof}

%%%%%%%%%%%%%%%%%%%%%%%

\subsection{The coefficients $b_i$ of $L_{\alpha}f$}
\
\smallskip

Let $f=\sum_{i=0}^{m} a_{m-i} x^i\in{\mathbb F}_q[x]$ be a polynomial of degree $m$ and $L_{\alpha}f=\sum_{i=0}^{d} b_{d-i} x^i$ be the associated polynomial of degree  $d$ when $m$ is odd and of degree less than or equal to $d$ otherwise (see Proposition \ref{degree_of_g}).
To obtain information on the coefficients $b_i$, one
can consider the triangular linear system
with coefficients $1$ on the diagonal
arising when identifying 
the coefficients
of $x^{2d}, x^{2d-2}, \ldots,x^2,x^0$
in $g \left( x(x+\alpha ) \right)$ and in $D_{\alpha}f$.
Note that this approach
proves again the unicity of $g$
claimed in Proposition \ref{polynomeg}.

More precisely, 
a necessary condition for the term $b_s x^t$
to appear in $g \left( x(x+\alpha) \right)$ is that
$d-t \leqslant s \leqslant d-t/2$.
In this case, it appears with the
coefficient
$\binom{d-s}{t-d+s} \alpha^{2(d-s)-t}$.
So for each integer $k$ between $0$ and $d$,
identifying
the coefficient of $x^{2(d-k)}$ in $g \left( x(x+\alpha) \right) $
and in $D_{\alpha}f(x)$ gives

\begin{equation}\label{relations}
 \sum_{
s=\Max 
\{0 , 2k-d\}
} ^{k}
\binom{d-s}{2k-2s} \alpha^{2k-2s} b_s = 
\sum_{i =2d-2k+1}^{m} 
\binom{i}{2d-2k} \alpha^{i-2d+2k} a_{m-i}.
\end{equation}

We consider the polynomial ring  $\mathbb{F}_2 [\alpha, a_0, \ldots, a_m ]$
where $\alpha, a_0,\ldots, a_m$ are indeterminates with the degree $w$  such that $w(\alpha)=1$ and $w(a_j)=j$.   It means that the monomial $\alpha^{d_{\alpha}}a_0^{d_0}a_1^{d_1}a_2^{d_2}\ldots a_m^{d_m}$ has degree $d_{\alpha}+d_1+2d_2+\cdots +md_m$.
Then using the triangular system obtained from  (\ref{relations}) and an induction on $k$ 
prove the following homogeneity result.
\begin{lemma}\label{coeff_b_i}
For all integers $i$ such that $0 \leqslant i \leqslant d$
we have $b_i \in \mathbb{F}_2 [\alpha, a_0, \ldots, a_m ]$
which is an homogeneous polynomial of degree $2i+1$ 
if $m$ is odd and
of degree $2i+2$ if $m$ is even,
when considering
the degree $w$ such that $w(\alpha)=1$ and $w(a_j)=j$.
\end{lemma}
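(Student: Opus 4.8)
The plan is to prove Lemma \ref{coeff_b_i} by induction on $k$ using the triangular system of equations \eqref{relations}. First I would isolate $b_k$ from \eqref{relations} by noting that the $s=k$ term on the left-hand side has coefficient $\binom{d-k}{0}\alpha^0 = 1$, so that
\begin{equation*}
b_k = \sum_{i=2d-2k+1}^{m}\binom{i}{2d-2k}\alpha^{i-2d+2k}a_{m-i} \;+\; \sum_{s=\Max\{0,2k-d\}}^{k-1}\binom{d-s}{2k-2s}\alpha^{2k-2s}b_s,
\end{equation*}
where all binomial coefficients are to be read modulo $2$. This expresses $b_k$ as a polynomial in $\alpha, a_0,\ldots,a_m$ and in $b_0,\ldots,b_{k-1}$, which by the induction hypothesis are themselves polynomials in $\mathbb{F}_2[\alpha,a_0,\ldots,a_m]$; hence $b_k \in \mathbb{F}_2[\alpha,a_0,\ldots,a_m]$, and it only remains to track the weighted degree $w$.

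Next I would check the two families of terms separately for $w$-homogeneity, treating the odd and even cases of $m$ in parallel (they differ only by the value of $d$ in Definition \ref{def:d}, i.e. $m = 2d+1$ or $m = 2d+2$). For a term from the first sum, the monomial is $\alpha^{i-2d+2k}a_{m-i}$, which has $w$-degree $(i-2d+2k) + (m-i) = m - 2d + 2k$; this equals $2k+1$ when $m=2d+1$ and $2k+2$ when $m=2d+2$, exactly as claimed. For a term $\alpha^{2k-2s}b_s$ from the second sum, the induction hypothesis gives $w(b_s) = 2s+1$ (odd case) or $2s+2$ (even case), so the $w$-degree is $(2k-2s)+(2s+1) = 2k+1$, respectively $(2k-2s)+(2s+2)=2k+2$. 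Thus every monomial appearing in the expression for $b_k$ has the same $w$-degree, so $b_k$ is $w$-homogeneous of the asserted degree. The base case $k=0$ is immediate: $b_0 = \sum_{i=2d+1}^m \binom{i}{2d}\alpha^{i-2d}a_{m-i}$, whose monomials all have $w$-degree $m-2d$, namely $1$ if $m$ is odd and $2$ if $m$ is even (and in fact $b_0 = a_0$ when $m$ is odd, recovering the leading behaviour).

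I do not expect a serious obstacle here; the argument is a bookkeeping induction, and the only points requiring care are: (a) confirming that the claimed range $d-t \leqslant s \leqslant d-t/2$ for the appearance of $b_s x^t$ in $g(x(x+\alpha))$, and the corresponding coefficient $\binom{d-s}{t-d+s}\alpha^{2(d-s)-t}$, are correct — this is a direct expansion of $b_s(x^2+\alpha x)^{d-s}$ and identification of the coefficient of $x^t$; (b) making sure the index ranges in \eqref{relations} are consistent so that the diagonal coefficient is genuinely $1$ and the recursion for $b_k$ only involves $b_s$ with $s<k$; and (c) keeping the two parities uniform by writing $m = 2d + c$ with $c \in \{1,2\}$ throughout, so that the common degree formula $2k + c$ drops out. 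If anything is delicate it is purely the indexing in (a)–(b), which I would verify once carefully at the start and then run the induction mechanically.
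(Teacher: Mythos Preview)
Your proposal is correct and follows exactly the approach the paper itself takes: the paper's proof is the single sentence ``using the triangular system obtained from \eqref{relations} and an induction on $k$,'' and you have simply written out that induction in detail. One tiny slip in your aside: when $m$ is odd the base case gives $b_0 = \alpha a_0$ (of $w$-degree $1$), not $b_0 = a_0$; this matches Lemma~\ref{lemma:b1overb0} and does not affect your argument.
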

The relations (\ref{relations}) also provide expressions of the first coefficients $b_0, b_1,\ldots$ of $L_{\alpha}f$
depending on the congruence class of $m$ modulo $8$, as made explicit
in the next lemma which will be needed in the proof of Theorem \ref{Principal_7_mod_8}. 
Note that formulas for $b_1/b_0$ 
appeared in \cite{Felipe} as well, but the last two had misprints.

\begin{lemma}\label{lemma:b1overb0}
Let $m$ be an integer. If $m\equiv 0\pmod 4$ then $b_0=a_1\alpha$ and if $m\equiv 3\pmod 4$ then $b_0=a_0\alpha$. Moreover, we have the following expressions of $b_1/b_0$ depending on the congruence of $m$:

\begin{displaymath}
\begin{array}{cc} 
\hline\\[-5pt]
m \pmod 8 & b_1/b_0    \\[5pt]
\hline\\
3          & \alpha^2 +\frac{a_1 \alpha + a_2}{a_0}                                          \\[5pt]
7           & \frac{a_1 \alpha + a_2}{a_0}                                        \\[5pt]
0         & \frac{a_2\alpha+a_3}{a_1}                                     \\[5pt]
4          & \alpha^2+\frac{a_0 \alpha^3+a_2\alpha+a_3}{a_1}                                     \\[7pt]
\hline
\end{array}
%\captionof{table}{Definition of $d$}
\label{table:definition_of_b_1/b_0}
\end{displaymath}

\end{lemma}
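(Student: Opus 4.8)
The plan is to work directly from the triangular linear system obtained from the relations~\eqref{relations}, solving for the coefficients $b_0, b_1$ in the low-order cases $k=0$ and $k=1$. First I would record that for $k=0$ relation~\eqref{relations} reads $b_0 = \sum_{i=2d-1}^{m} \binom{i}{2d}\alpha^{i-2d} a_{m-i}$, and I would split this into the odd and even cases using Definition~\ref{def:d}. When $m$ is odd, $2d = m-1$, so the sum runs over $i \in \{m-2, m-1, m\}$ and the relevant binomial coefficients are $\binom{m-2}{m-1}=0$, $\binom{m-1}{m-1}=1$, $\binom{m}{m-1}=m$; hence $b_0 = a_1 + m\alpha a_0$. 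Reducing $m$ modulo $2$, when $m\equiv 3\pmod 4$ we get $m\equiv 1\pmod 2$ so $b_0 = a_1 + \alpha a_0$; but we also need to cross-check against the value $c_1 = m\alpha a_0 = \alpha a_0$ of $D_\alpha f$, which forces the $x^1$-coefficient identity and in fact pins down $b_0 = a_0\alpha$ once one is careful about which power of $x$ carries which coefficient. (The clean way is to instead compare the coefficient of the \emph{lowest} occurring power: since $m$ is odd, $D_\alpha f$ has a nonzero $x^1$ term equal to $m\alpha a_0 = \alpha a_0$, and in $g(x(x+\alpha))$ the term $x^1$ can only come from $b_{d-1}\cdot(x(x+\alpha))^{\,?}$ — one must track this through, but the upshot quoted in the lemma is $b_0 = a_0\alpha$ for $m\equiv 3\pmod 4$ and $b_0 = a_1\alpha$ for $m\equiv 0\pmod 4$.)

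More precisely, the efficient route is to use the homogeneity from Lemma~\ref{coeff_b_i}: $b_0$ is $w$-homogeneous of degree $1$ if $m$ is odd and of degree $2$ if $m$ is even. For $m$ odd the only degree-$1$ monomials in $\mathbb{F}_2[\alpha,a_0,\dots,a_m]$ are $\alpha$ and $a_1$, so $b_0 \in \{0,\alpha,a_1,\alpha+a_1\}$; plugging $f=x^m$ (so all $a_i=0$ except $a_m$... wait, rather $f=x^m$ means $a_0=1$, $a_i=0$ for $i\ge 1$) into $D_1(x^m)=g(x(x+1))$ and reading off the linear part resolves the ambiguity and gives $b_0=a_0\alpha$ when $m\equiv 3\pmod 4$. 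Symmetrically, testing $f=x^{m-1}$ handles $m\equiv 0\pmod 4$, giving $b_0=a_1\alpha$. This scaling-plus-specialization trick is what I would use throughout rather than carrying the full symbolic solution.

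For $b_1$ I would then take $k=1$ in~\eqref{relations}. The left side is $\sum_{s=\Max\{0,2-d\}}^{1}\binom{d-s}{2-2s}\alpha^{2-2s}b_s$; since $d\ge 3$ the lower limit is $0$, so it equals $\binom{d}{2}\alpha^2 b_0 + b_1$. The right side is $\sum_{i=2d-1}^{m}\binom{i}{2d-2}\alpha^{i-2d+2}a_{m-i}$, a sum over $i\in\{2d-1,2d,2d+1,2d+2\}$ intersected with $\{\le m\}$. Hence $b_1 = \binom{d}{2}\alpha^2 b_0 + \sum_i \binom{i}{2d-2}\alpha^{i-2d+2}a_{m-i}$, and then $b_1/b_0$ is obtained by dividing by the $b_0$ found above. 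The remaining work is purely arithmetic: for each residue class of $m$ modulo $8$, substitute $d=(m-1)/2$ or $d=(m-2)/2$, evaluate the binomial coefficients $\binom{d}{2}$, $\binom{2d-1}{2d-2}=2d-1$, $\binom{2d}{2d-2}=\binom{2d}{2}$, $\binom{2d+1}{2d-2}=\binom{2d+1}{3}$, $\binom{2d+2}{2d-2}=\binom{2d+2}{4}$ modulo $2$ using Lucas' theorem, and simplify. The congruence of $m$ mod $8$ is exactly what controls the parities of $d$, $\binom{d}{2}$, and these higher binomials, which is why the four cases look different and why the $\alpha^2$ (resp. $\alpha^3$) terms appear only for $m\equiv 3$ (resp. $m\equiv 4$). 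I expect the main obstacle to be purely bookkeeping: getting every binomial coefficient's parity right via Lucas, and being careful that for $m\equiv 7\pmod 8$ the term $\binom{d}{2}\alpha^2 b_0$ vanishes mod $2$ (which is precisely why $b_1/b_0$ has no $\alpha^2$ term in that row) while for $m\equiv 3\pmod 8$ it survives. A sanity check against the specializations $f=x^m, x^{m-1}, x^{m-2}, x^{m-3}$ — or against the formula for $c_2$ already computed in the text — would confirm the final table entries and catch exactly the kind of misprint alluded to after the statement.
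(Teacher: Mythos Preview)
Your approach is exactly the paper's: the paper itself gives no detailed proof, merely stating that the relations~(\ref{relations}) ``provide expressions of the first coefficients $b_0,b_1,\ldots$'' and recording the table. Solving the $k=0$ and $k=1$ instances of~(\ref{relations}) and reducing the binomial coefficients via Lucas is precisely the intended argument.

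There are, however, two concrete slips in your write-up. First, for $k=0$ the right-hand side of~(\ref{relations}) has lower index $2d-2k+1=2d+1$, not $2d-1$. With the correct bound, for $m$ odd the only term is $i=m$, giving $b_0=m\alpha a_0=\alpha a_0$ immediately with no spurious $a_1$ term; the confusion in your first paragraph evaporates and no detour through specialization is needed. Second, your parity claim for $\binom{d}{2}$ is reversed: for $m\equiv 7\pmod 8$ one has $d\equiv 3\pmod 4$, so $\binom{d}{2}$ is \emph{odd}, while for $m\equiv 3\pmod 8$ one has $d\equiv 1\pmod 4$ and $\binom{d}{2}$ is \emph{even}. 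The reason $b_1/b_0$ has no $\alpha^2$ term when $m\equiv 7\pmod 8$ is not that $\binom{d}{2}\alpha^2 b_0$ vanishes, but that it equals $\alpha^3 a_0$ and cancels against the term $\binom{m}{3}\alpha^3 a_0$ (both coefficients are odd). These are exactly the bookkeeping hazards you anticipated; once corrected, the computation goes through in each residue class and your planned specialization checks against $f=x^m,\ldots,x^{m-3}$ confirm the table.
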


%%%%%%%%%%%%%%%%%%%%%%%%%%%%%%%%

\section{For almost every $\alpha$ the polynomial $L_{\alpha}f$ is Morse}\label{Morse}

We will focus now on polynomials $f$ of degree $m\equiv 3\pmod{4}$ and thus, for nonzero $\alpha$,  on polynomials $L_{\alpha}f$ of odd degree $d=(m-1)/2$.

\subsection{Morse polynomials in even characteristic}
We consider the following notion of Morse polynomial given in all characteristic by Geyer in an appendix to the paper \cite{JardenRazon}.

\begin{definition}\label{Definition_Morse}
Let $K$ be a field of characteristic $p \geqslant 0$.
We say that a polynomial $g$ over $K$ is  Morse 
if the three following conditions hold:
\begin{enumerate}[label=(\alph*)]
\item the critical points of $g$, i.e the zeroes of $g'$, are non degenerate, 
\item the critical values of $g$ are distinct, i.e. 
$g'(\tau)=g'(\eta)=0$ 
and $g(\tau)=g(\eta)$ imply $\tau=\eta$,
\item if $p>0$, then the degree of $g$ is not divisible by $p$.
\end{enumerate}
\end{definition}
These conditions are chosen such that $g$
corresponds to a covering with maximum Galois group, 
that is $\Gal \left( g(t)-x , K(x) \right)$  is the symmetric group $\Sgoth_d$ where $d$ is the degree of $g$
 (see Proposition 4.2 in \cite{JardenRazon}).
In the case where $p>0$, 
the loci of non-Morse polynomials is described in 
the same appendix.

Let us sum up the situation 
in the case where $p=2$.
In this case
one has to introduce the Hasse-Schmidt derivative $g^{[2]}$ which is defined by 
the equality 
$g(t+u) \equiv g(t)+g'(t)u + g^{[2]}(t)u^2 \pmod{u^3}$
where $u$ and $t$ are independent variables.
If $g=\sum_{i=0}^db_{d-i}x^i$ is a degree $d$ polynomial of ${\mathbb F}_q[x]$ with $q$ a power of 2, then
the condition (a) above is fulfilled if and only if $g'$ and $g^{[2]}$ have
no common roots, that is if and only if the resultant
$$R:=\Res(g',g^{[2]}) \in \mathbb{F}_2[b_0,\ldots,b_d]$$
 does not
vanish. 
And the condition (b) above is fulfilled if and only if 
$$\Pi(g):=\prod_{i \neq  j} \left( g(\tau_i) - g(\tau_j) \right)$$
 does not vanish,
where $\tau_1, \ldots, 
\tau_{\left[ \frac{d-1}{2} \right] }$ 
are the (double) roots of $g'$.
Using the theorem on symmetric functions, one can obtain an expression of 
$\Pi(g)$ depending on the coefficients $b_0, \ldots , b_d$ of $g$.

In order to calculate the second order Hasse-Schmidt derivative, 
we will make use of the following Lucas theorem about binomial coefficients (see for instance the introduction of \cite{Granville}).
 For
$p$  a prime number, write $m=m_0+m_1p+m_2p^2+\cdots +m_rp^r$ and $k=k_0+k_1p+k_2p^2+\cdots +k_rp^r$ in base $p$.
Then we have
$\displaystyle{
\binom{m}{k}\equiv \binom{m_0}{k_0}\binom{m_1}{k_1}\cdots \binom{m_r}{k_r} \pmod p.
}$

\subsection{The condition (a)}

In order to bound the number of $\alpha$ such that
the critical values of $L_{\alpha}f$ are non degenerate, we study in 
this subsection
  $\Res((L_{\alpha}f)',(L_{\alpha}f)^{[2]}) \in 
\mathbb{F}_2[a_0,\ldots,a_m]$.

We will need three lemmas to succeed in doing so.
Lemma \ref{lemma:from_L_to_D} enables us to study 
$\tilde{R}:=\Res \left( (D_{\alpha}f)' , (D_{\alpha}f)^{[2]} \right)$
rather than $\Res \left( (L_{\alpha}f)' , (L_{\alpha}f)^{[2]} \right)$. Then
Lemma \ref{lemma:R_is_homogeneous} gives a result about the homogeneity and the degree 
of this polynomial if it is nonzero.
To prove its non nullity 
we evaluate it in $a_0=1,a_1=\cdots=a_m=0$ which amounts to
determining in Lemma \ref{lemma:R_x_m} if the polynomial $x^m$ has non degenerate critical points.

\begin{proposition}\label{proposition:condition_a}
Let $m\geqslant 7$ such that $m \equiv 3 \pmod 4$
and 
let $f(x)=\sum_{k=0}^{m} a_{m-k}x^k$ be a polynomial
of $\mathbb{F}_q[x]$
of degree $m$.
Then the critical points of $L_{\alpha}f$ are non degenerate except for at most $m(m-3)$ values
of $\alpha \in \overline{\mathbb F}_2$.
\end{proposition}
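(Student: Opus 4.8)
The plan is to reduce the study of $\Res\left((L_{\alpha}f)',(L_{\alpha}f)^{[2]}\right)$ to a computation about $x^m$, exactly as announced in the paragraph preceding the statement. First I would invoke Lemma~\ref{lemma:from_L_to_D} to replace the resultant associated to $L_{\alpha}f$ by $\tilde R:=\Res\left((D_{\alpha}f)',(D_{\alpha}f)^{[2]}\right)$, so that we may work with the derivative $D_{\alpha}f$ of $f$ itself rather than with the associated polynomial. Since $m$ is odd, $D_{\alpha}f$ has degree $m-1$ by Proposition~\ref{degree_of_g}, and $(D_{\alpha}f)'$ has degree $m-2$, while $(D_{\alpha}f)^{[2]}$ has degree at most $m-3$; hence $\tilde R$, viewed as a polynomial in $\mathbb{F}_2[\alpha,a_0,\ldots,a_m]$, has a controlled degree. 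Here I would use Lemma~\ref{lemma:R_is_homogeneous}: if $\tilde R$ is nonzero then it is homogeneous in the weighting $w$ with $w(\alpha)=1$, $w(a_j)=j$, of the degree predicted there, and in particular its total degree in $\alpha$ (for a generic specialization of the $a_j$) is at most some explicit bound in $m$.

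The heart of the argument is then to show $\tilde R$ is not identically zero and to count its zeros in $\alpha$. To prove non-vanishing, I would specialize $f=x^m$, i.e. $a_0=1$ and $a_1=\cdots=a_m=0$; by Lemma~\ref{lemma:R_x_m} the polynomial $x^m$ has non-degenerate critical points (this is where the hypothesis $m\equiv 3\pmod 4$, hence $m$ odd and in particular $p\nmid m$ together with the Lucas-theorem computation of $(x^m)^{[2]}$, enters), so $\tilde R$ does not vanish under this specialization and therefore $\tilde R\neq 0$ as a polynomial. Consequently, for the given $f$ the polynomial $\alpha\mapsto \tilde R(\alpha,a_0,\ldots,a_m)$ is either identically zero or a nonzero polynomial in $\alpha$ of degree at most the bound furnished by the homogeneity lemma. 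One still has to rule out the possibility that it vanishes identically for the \emph{particular} coefficients $a_0,\ldots,a_m$ of $f$; this is handled by observing that $\tilde R$ is homogeneous in the $a_j$'s for the weight $w$, so its coefficient as a polynomial in $\alpha$ attached to the top $\alpha$-power is itself a nonzero polynomial in the $a_j$ which — by the leading-term structure forced by $D_{\alpha}f$ having degree exactly $m-1$ — is a nonzero scalar multiple of a power of the leading coefficient of $f$, hence nonzero. Thus $\alpha\mapsto\tilde R(\alpha,\ldots)$ is a nonzero polynomial.

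Finally I would translate the degree bound into the count of exceptions. The number of $\alpha\in\overline{\mathbb F}_2$ for which the critical points of $L_{\alpha}f$ are degenerate is at most the number of roots of this nonzero one-variable polynomial in $\alpha$, which is at most its degree; a bookkeeping with the weighted degrees from Lemma~\ref{coeff_b_i} and Lemma~\ref{lemma:R_is_homogeneous} (using $d=(m-1)/2$) gives the bound $m(m-3)$. The main obstacle I anticipate is precisely this degree bookkeeping: one must track carefully how the weight $w$ behaves under passing from $L_{\alpha}f$ to $D_{\alpha}f$, under taking $(\,\cdot\,)'$ and $(\,\cdot\,)^{[2]}$, and under forming the resultant of a degree-$(m-2)$ and a degree-$(m-3)$ polynomial, and then extract the exact exponent of $\alpha$ so that it matches $m(m-3)$ rather than a weaker bound. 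Everything else is a routine application of the three lemmas, the theory of resultants, and the fact that a nonzero univariate polynomial has at most as many roots as its degree.
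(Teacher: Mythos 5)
Your proposal follows the paper's proof essentially step for step: reduce to $\tilde R=\Res\bigl((D_{\alpha}f)',(D_{\alpha}f)^{[2]}\bigr)$ via Lemma~\ref{lemma:from_L_to_D}, use the two homogeneity statements of Lemma~\ref{lemma:R_is_homogeneous} to see that the only possible monomial of $\alpha$-degree at least $m(m-3)$ is $a_0^{2(m-3)}\alpha^{m(m-3)}$, use Lemma~\ref{lemma:R_x_m} (specialization at $f=x^m$, $\alpha=1$) to see that this monomial actually occurs, and conclude that for any degree-$m$ polynomial (so $a_0\neq 0$) the map $\alpha\mapsto\tilde R$ is a nonzero polynomial of degree $m(m-3)$, hence has at most $m(m-3)$ roots. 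Two small corrections to your write-up: $(D_{\alpha}f)'$ has degree $m-3$ in $x$, not $m-2$ (the coefficient of $x^{m-2}$ is $(m-1)\alpha a_0=0$ in characteristic $2$), so the resultant is that of two degree-$(m-3)$ polynomials; and the nonvanishing of the top-$\alpha$ coefficient for your \emph{particular} $f$ is not a consequence of ``leading-term structure'' alone but of the second homogeneity (for the weight $\tilde w$ with $\tilde w(a_i)=1$, $\tilde w(\alpha)=0$), which forces that coefficient to be exactly $a_0^{2(m-3)}$ once the specialization argument shows it is nonzero.
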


\begin{lemma}\label{lemma:from_L_to_D}
Let $f\in \mathbb{F}_q[x]$ be a polynomial.
For all $\alpha \in \mathbb{F}_q^{\ast}$ 
the polynomials $(L_{\alpha}f)'$ and $(L_{\alpha}f)^{[2]}$
have a common root in $\overline{\mathbb F}_2$ if and only if
the polynomials $(D_{\alpha}f)'$ and $(D_{\alpha}f)^{[2]}$ 
have a common root in $\overline{\mathbb F}_2$.
\end{lemma}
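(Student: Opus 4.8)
The plan is to relate the pair $\bigl((L_\alpha f)', (L_\alpha f)^{[2]}\bigr)$ to the pair $\bigl((D_\alpha f)', (D_\alpha f)^{[2]}\bigr)$ through the substitution $y = x(x+\alpha)$, which is the defining relation $D_\alpha f(x) = (L_\alpha f)(x(x+\alpha))$ from Proposition \ref{polynomeg}. Write $g = L_\alpha f$ and $\phi(x) = x(x+\alpha) = x^2 + \alpha x$, so that $D_\alpha f = g \circ \phi$. Since we are in characteristic $2$, $\phi'(x) = \alpha$ is a nonzero constant, which is the crucial feature that makes the chain rule behave nicely here.

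First I would compute the first derivative: $(D_\alpha f)'(x) = g'(\phi(x)) \cdot \phi'(x) = \alpha\, g'(\phi(x))$. Next I would compute the Hasse–Schmidt derivative $(D_\alpha f)^{[2]}$ by substituting $x \mapsto x+u$ into $g(\phi(x))$ and expanding modulo $u^3$. Since $\phi(x+u) = \phi(x) + \alpha u + u^2$, writing $v = \alpha u + u^2$ we get $g(\phi(x+u)) = g(\phi(x) + v) \equiv g(\phi(x)) + g'(\phi(x))\,v + g^{[2]}(\phi(x))\,v^2 \pmod{v^3}$; then $v \equiv \alpha u + u^2$ and $v^2 \equiv \alpha^2 u^2 \pmod{u^3}$, so the coefficient of $u^2$ is $g'(\phi(x)) + \alpha^2\, g^{[2]}(\phi(x))$. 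Hence
\begin{equation}\label{eq:HS_chain}
(D_\alpha f)'(x) = \alpha\, g'(\phi(x)), \qquad (D_\alpha f)^{[2]}(x) = g'(\phi(x)) + \alpha^2\, g^{[2]}(\phi(x)).
\end{equation}

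Now the equivalence follows by tracking common roots through $\phi$. Suppose $x_0 \in \overline{\mathbb F}_2$ is a common root of $(D_\alpha f)'$ and $(D_\alpha f)^{[2]}$. From the first equation in \eqref{eq:HS_chain}, $g'(\phi(x_0)) = 0$ (as $\alpha \neq 0$); feeding this into the second equation gives $\alpha^2\, g^{[2]}(\phi(x_0)) = 0$, hence $g^{[2]}(\phi(x_0)) = 0$. So $\phi(x_0)$ is a common root of $g'$ and $g^{[2]}$. Conversely, if $y_0 \in \overline{\mathbb F}_2$ is a common root of $g'$ and $g^{[2]}$, pick $x_0 \in \overline{\mathbb F}_2$ with $\phi(x_0) = y_0$ — this is always possible since $\phi$ is a nonconstant (degree $2$) polynomial and $\overline{\mathbb F}_2$ is algebraically closed — and then \eqref{eq:HS_chain} shows both $(D_\alpha f)'(x_0)$ and $(D_\alpha f)^{[2]}(x_0)$ vanish. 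This proves both directions.

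There is no real obstacle here; the only point requiring a little care is the Hasse–Schmidt chain-rule computation \eqref{eq:HS_chain}, where one must remember that the quadratic term $u^2$ in $\phi(x+u)$ contributes to the $u^2$-coefficient of $g(\phi(x+u))$ both through the linear term $g'(\phi(x))\cdot u^2$ and through the square of the linear term $v = \alpha u$. Once \eqref{eq:HS_chain} is in hand the equivalence of common roots is immediate, and the surjectivity of $\phi$ on $\overline{\mathbb F}_2$ handles the converse direction without fuss.
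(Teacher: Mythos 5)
Your proof is correct and follows essentially the same route as the paper: you establish the two identities $(D_\alpha f)'=\alpha\,(L_\alpha f)'\circ T_\alpha$ and $(D_\alpha f)^{[2]}=(L_\alpha f)'\circ T_\alpha+\alpha^2 (L_\alpha f)^{[2]}\circ T_\alpha$ and then track common roots through $T_\alpha$, which is exactly the paper's argument (the paper merely states the identities and says ``the result follows''). Your write-up usefully supplies the Hasse--Schmidt chain-rule computation and the surjectivity of $x\mapsto x(x+\alpha)$ on $\overline{\mathbb F}_2$ that the paper leaves implicit.
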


\begin{proof} Since  $D_{\alpha}f=L_{\alpha}f\circ T_{\alpha}$ where $T_{\alpha}(x):=x(x+ \alpha)$, we can prove
the two following equalities:
$$(D_{\alpha}f)'=\alpha (L_{\alpha}f)'\circ T_{\alpha}$$
and
$$(D_{\alpha}f)^{[2]}=(L_{\alpha}f\circ T_{\alpha})^{[2]}=(L_{\alpha}f)'\circ T_{\alpha} + \alpha^2(L_{\alpha}f)^{[2]}\circ T_{\alpha}.$$
The result follows.
\end{proof}

\begin{lemma}\label{lemma:R_is_homogeneous}
Let $m\geqslant 7$ 
such that $m \equiv 3 \pmod 4$
and let $f=\sum_{k=0}^{m} a_{m-k}x^k$
in $\mathbb{F}_2[a_0,\ldots,a_m] [x]$.
Consider the degree $w$ defined by $w(\alpha)=1$ and
$w(a_i)=i$ for any $i$ and consider also the degree $\tilde w$ defined by
$\tilde{w}(\alpha)=0$ and
$\tilde{w}(a_i)=1$.

Then 
the resultant $\Res \left( (D_{\alpha}f)' , (D_{\alpha}f)^{[2]} \right)$ 
in the variable $x$, if it is nonzero,
is an homogeneous polynomial of ${\mathbb{F}_2}[a_0,\ldots,a_m,\alpha]$
of degree $m(m-3)$  when considering 
the degree $w$  and is an homogeneous polynomial of degree $2(m-3)$ 
when considering 
the degree $\tilde{w}$.
\end{lemma}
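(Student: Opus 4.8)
The plan is to work directly with $P(x):=D_\alpha f(x)=\sum_{k=0}^m c_{m-k}x^k$, a polynomial of degree $m-1$ (since $m$ is odd), and compute the weighted degrees of $R:=\Res\big(P',P^{[2]}\big)$ in the variable $x$ under the two gradings $w$ and $\tilde w$. First I would recall from Section \ref{section_nabla} that each coefficient $c_j$ of $D_\alpha f$ is a polynomial in $\alpha,a_0,\ldots,a_m$ which is homogeneous of $w$-degree $j$ (the same induction that gives $c_0=0$, $c_1=m\alpha a_0$, $c_2=(m-1)\alpha a_1+\binom m2\alpha^2 a_0$ shows this in general from the formula $c_k=a_k+\sum_{i=m-k}^m a_{m-i}\binom i{m-k}\alpha^{i-m+k}$), and of $\tilde w$-degree exactly $1$. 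Consequently $P'=\sum_{k\ \mathrm{odd}}c_{m-k}x^{k-1}$ has coefficients of $w$-degree $m-k$ (hence the coefficient of $x^{j}$ has $w$-degree $m-1-j$) and $\tilde w$-degree $1$, and likewise $P^{[2]}$, whose coefficient of $x^j$ is $\binom{j+2}{2}c_{m-j-2}$, has coefficients of $w$-degree $m-j-2$ and $\tilde w$-degree $1$.

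The key tool is the standard homogeneity property of the resultant: if $A=\sum_{i=0}^{r}A_i x^i$ and $B=\sum_{j=0}^{s}B_j x^j$ with $A_r\neq 0$, $B_s\neq 0$, and if we assign to $A_i$ a weight so that $A$ is "weighted homogeneous of degree $\rho$ in $x$" (meaning $\deg(A_i)+i$ is constant $=\rho$) and similarly $B$ of degree $\sigma$, then $\Res(A,B)$ is homogeneous of degree $s\rho+r\sigma-rs$ — equivalently one uses the Poisson product formula $\Res(A,B)=A_r^{\,s}\prod_{A(\xi)=0}B(\xi)$ together with the fact that each root $\xi$ of $A$ is "weighted homogeneous of degree $1$". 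I would apply this with $A=P'$, which has degree $r=m-2$ in $x$ and is $w$-weighted homogeneous of degree $\rho=m-1$ (the coefficient of $x^{m-2}$ is $c_1=m\alpha a_0$, of $w$-degree $1=(m-1)-(m-2)$), and $B=P^{[2]}$, which has degree $s=m-3$ in $x$ (leading coefficient $\binom{m-1}{2}c_2$, nonzero since $m\equiv 3\bmod 4$ makes $\binom{m-1}{2}$ odd by Lucas) and is $w$-weighted homogeneous of degree $\sigma=m-2$. Then, when $R\neq 0$, its $w$-degree is
$$
s\rho + r\sigma - rs = (m-3)(m-1) + (m-2)^2 - (m-2)(m-3) = m(m-3),
$$
after expanding. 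For the grading $\tilde w$ the computation is the same but all coefficients of $P'$ and $P^{[2]}$ have $\tilde w$-degree $1$, i.e. $P'$ and $P^{[2]}$ are $\tilde w$-homogeneous "of degree $1$" in the naive sense; the Poisson formula then gives that each root contributes $\tilde w$-degree $0$ while the leading coefficients contribute, yielding $\tilde w$-degree $s\cdot 1 + r\cdot 1 = (m-3)+(m-2)$ — wait, this overcounts, so here I must be careful and instead note directly that $R$ is a polynomial expression (the determinant of the Sylvester matrix) in the $c_j$'s that is homogeneous of total degree $s+r=(m-3)+(m-2)=2m-5$ in the $c_j$'s; but the correct bookkeeping via the Sylvester determinant, whose entries are the $c_j$ and which has size $r+s=2m-5$ with exactly $s$ rows of $A$-coefficients and $r$ rows of $B$-coefficients, shows $R$ is homogeneous of degree $s+r$ — no: each term of the determinant is a product of $r+s$ entries, but entries can be $0$ or $1$ (from the $\binom{j+2}{2}$ reductions the $B$-entries are $0$ or $c_j$), so $R$ is homogeneous of $\tilde w$-degree equal to $s+r-(\text{number of constant entries per term})$. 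The cleanest route is: since $P',P^{[2]}$ have the special shape with only odd/even-indexed monomials, substitute and recompute; the claimed answer $2(m-3)$ should drop out, and I would verify it by cross-checking against the $w$-grading (setting $w(\alpha)=1,w(a_i)=i$ versus $\tilde w(\alpha)=0,\tilde w(a_i)=1$, and noting the $\alpha$-degree of $R$ is $m(m-3)-2(m-3)\cdot(\text{something})$) and against the small case $m=7$ where $R$ can be computed explicitly.

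The main obstacle I anticipate is precisely this second computation: getting the $\tilde w$-degree right requires tracking how many of the Sylvester-matrix entries are constants (the $0$'s and $1$'s coming from $c_0=0$, from $\binom{j+2}{2}\bmod 2$, and from the gaps forced by $P$ having only every-other monomial after differentiating), rather than variables $c_j$ of $\tilde w$-degree $1$. I would handle this by reindexing: write $P'(x)=x\cdot Q(x^2)$-type structure is not available in characteristic $2$, but $P'$ has nonzero terms only in odd degrees $k-1$ for $k$ odd, i.e. only in even degrees; similarly $P^{[2]}$'s coefficient of $x^j$ is $\binom{j+2}2 c_{m-j-2}$ which by Lucas vanishes unless $j\equiv 1,2\pmod 4$ — so both polynomials are, up to the change $x\mapsto$, essentially supported on residues mod $4$. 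This lets me factor the Sylvester resultant through a smaller resultant in the variable $y=x^2$ (or handle the two "halves" $m\equiv 1,2\bmod 4$ of exponents separately), and on that reduced resultant every entry genuinely is one of the $c_j$, so the naive homogeneity count applies and yields $\tilde w$-degree $2(m-3)$; pushing back through $x\mapsto$ is degree-preserving for $\tilde w$ since $\tilde w(\alpha)=\tilde w(x)=0$. I would then double-check internal consistency: the $\alpha$-degree of $R$ equals its $w$-degree minus its "$a$-degree", and since each $a_i$ carries $w$-weight $i$ but $\tilde w$-weight $1$, a monomial of $R$ of $\tilde w$-degree $2(m-3)$ in which the $a_i$'s contribute $\tilde w$-degree $2(m-3)$ total must have $\alpha$-degree $m(m-3)-(\text{sum of those }i)$, and the extreme monomial $a_0^{2(m-3)}\alpha^{m(m-3)}$ (from setting $a_1=\cdots=a_m=0$, i.e. $f=a_0x^m$) is exactly the one whose non-vanishing is the subject of Lemma \ref{lemma:R_x_m}, confirming both degree claims simultaneously.
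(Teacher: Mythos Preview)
Your approach via the weighted homogeneity of the resultant is essentially the same as the paper's, but you have a genuine error in identifying the degrees of $P'$ and $P^{[2]}$, and this propagates.

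You claim $\deg_x P'=m-2$ with leading coefficient $c_1$. But $P=D_\alpha f$ has degree $m-1$, and since $m\equiv 3\pmod 4$ the exponent $m-1$ is even, so in characteristic $2$ the term $c_1x^{m-1}$ has zero derivative. The next term $c_2x^{m-2}$ (with $m-2$ odd) survives, so $\deg_x P'=m-3$ with leading coefficient $c_2=a_0\alpha^2$. Likewise the leading term of $P^{[2]}$ comes from $c_1x^{m-1}$, giving $\binom{m-1}{2}c_1x^{m-3}=c_1x^{m-3}$ (Lucas makes $\binom{m-1}{2}$ odd when $m\equiv 3\pmod 4$), so $\deg_x P^{[2]}=m-3$ with leading coefficient $a_0\alpha$, not $\binom{m-1}{2}c_2$. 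With the correct $r=s=m-3$, $\rho=m-1$, $\sigma=m-2$, your own formula gives
\[
s\rho+r\sigma-rs=(m-3)\bigl[(m-1)+(m-2)-(m-3)\bigr]=m(m-3),
\]
which is the paper's answer. (With your values $r=m-2$, $s=m-3$ the formula actually yields $m^2-3m+1$, not $m(m-3)$; your ``after expanding'' step is also arithmetically wrong.)

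For the $\tilde w$-degree you are making this much harder than it is. The paper's argument is one line: the Sylvester matrix of two degree-$(m-3)$ polynomials has size $2(m-3)$, and every nonzero entry is one of the $d_i$ or $e_i$, each of which is an $\mathbb{F}_2[\alpha]$-linear combination of $a_0,\ldots,a_m$, hence either zero or $\tilde w$-homogeneous of degree exactly $1$. Any nonzero term of the determinant is therefore a product of $2(m-3)$ such entries, so has $\tilde w$-degree exactly $2(m-3)$. There is no need for a change of variables $y=x^2$ or any analysis of which residues mod $4$ support the coefficients; the zero entries simply kill the corresponding terms and do not affect homogeneity of the surviving ones.
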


\begin{proof}
As $f(x)=\sum_{k=0}^{m} a_{m-k}x^k$ and 
$f(x+\alpha)=\sum_{k=0}^{m} a_{m-k}(x+\alpha)^k$, these two polynomials
are homogeneous of degree $m$ for the degree $w$ such that
$w(\alpha)=1$, 
$w(a_i)=i$ and $w(x)=1$.
 It follows that $(D_{\alpha} f) '$ and $(D_{\alpha} f) ^{[2]}$
 are  homogeneous of degree respectively $m-1$ and $m-2$ for 
 the  degree $w$. 
Using the formulae of $D_{\alpha}f$ given in Section \ref{section_nabla}, we have:
%{\color{red}
$$D_{\alpha}f(x)=\alpha a_0 x^{m-1}+a_0\alpha^2x^{m-2}
+\left( a_0\alpha^3+a_1\alpha^2+a_2\alpha \right)x^{m-3}+\cdots$$
%}
%Formula is OK and further {m\choose 3} = 1 as m is 3 mod 4 (Felipe)
%$$D_{\alpha}f(x)=m\alpha a_0 x^{m-1}+\left((m-1)\alpha a_1+{m\choose 2}a_0\alpha^2\right)x^{m-2}+
%$$
%$$+\left(a_0{m\choose 3}\alpha^3+a_1{m-1\choose 2}\alpha^2+a_2{m-2\choose 1}\alpha\right)x^{m-3}+\cdots$$

The polynomial
 $(D_{\alpha} f) '$ has degree $m-3$
 in the variable $x$ since $m$ is odd and its leading coefficient  is $a_0\alpha^2$.
 The polynomial $(D_{\alpha} f) ^{[2]}$ has also degree $m-3$
 in the variable $x$ since it can be shown that  $(x^k)^{[2]}=\binom{k}{2}x^{k-2}$ using  the binomial theorem, the above Lucas theorem and the congruence of $m$. Its leading coefficient is 
  $a_0\alpha$.
  
Thus we can set
 $(D_{\alpha} f) ' = \sum_{i=0}^{m-3} d_i x^{m-3-i}$ and
 $(D_{\alpha} f)^{[2]} = \sum_{i=0}^{m-3} e_i x^{m-3-i}$
 where $d_i,e_i \in \mathbb{F}_2[a_0,\ldots,a_m,\alpha]$ 
 are such that $w(d_i)=i+2$ and $w(e_i)=i+1$.
 Thus the resultant  $\Res \left( (D_{\alpha}f)' , (D_{\alpha}f)^{[2]} \right) $
 in the variable $x$, if it is nonzero,
 is an homogeneous polynomial of $\mathbb{F}_2[a_0,\ldots,a_m,\alpha]$ 
 of degree $m(m-3)$ for the degree $w$.
 For the second homogeneity result claimed, 
 note that this resultant is a sum of $2(m-3)$ products 
 of the coefficients $d_i$ and $e_i$, and each one of them
is a  linear combination in the $a_0,\ldots,a_m$.
\end{proof}

\begin{lemma}\label{lemma:R_x_m}
Let $m\geqslant 7$ such that $m \equiv 3 \pmod 4$
and let $f=x^m$.
For all $\alpha \in \mathbb{F}_q^{\ast}$ the critical points of $L_{\alpha}f$ are non degenerate.
\end{lemma}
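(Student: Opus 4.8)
The plan is to reduce first to the normalized case $\alpha=1$, and then to read everything off from the very explicit shape of $D_1 f$ for $f=x^m$ together with the identities already obtained in the proof of Lemma~\ref{lemma:from_L_to_D}.

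\emph{Reduction to $\alpha=1$.} Substituting $x=\alpha y$ gives $D_{\alpha}(x^m)(\alpha y)=(\alpha y+\alpha)^m+(\alpha y)^m=\alpha^m\bigl((y+1)^m+y^m\bigr)=\alpha^m D_1(y^m)(y)$, while by Proposition~\ref{polynomeg} we also have $D_{\alpha}(x^m)(\alpha y)=\bigl(L_{\alpha}(x^m)\bigr)(\alpha^2 y(y+1))$ and $D_1(y^m)(y)=\bigl(L_1(x^m)\bigr)(y(y+1))$. Since $y(y+1)$ attains every value of $\overline{\mathbb F}_2$, we deduce $\bigl(L_{\alpha}(x^m)\bigr)(z)=\alpha^m\,\bigl(L_1(x^m)\bigr)(z/\alpha^2)$. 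Replacing a polynomial $g$ by $z\mapsto c\,g(az)$ with $c,a\in\overline{\mathbb F}_2^{\ast}$ multiplies $g'$ by $ca$ and $g^{[2]}$ by $ca^2$, so it does not affect whether $g'$ and $g^{[2]}$ have a common root; hence it suffices to treat $\alpha=1$. Set $g_1:=L_1(x^m)$ and $T(x):=x(x+1)=x^2+x$.

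\emph{Explicit formulas for $g_1'\circ T$ and $g_1^{[2]}\circ T$.} From $D_1(x^m)=(x+1)^m+x^m$ and $m$ odd one gets $\bigl(D_1(x^m)\bigr)'=(x+1)^{m-1}+x^{m-1}$, and extracting the coefficient of $u^2$ in $(x+u+1)^m+(x+u)^m$ gives $\bigl(D_1(x^m)\bigr)^{[2]}=\binom{m}{2}\bigl((x+1)^{m-2}+x^{m-2}\bigr)$. Here $\binom{m}{2}\equiv1\pmod 2$ by the Lucas theorem recalled in Section~\ref{Morse}, since $m\equiv3\pmod4$ forces the two lowest binary digits of $m$ to be $1$; this is the only place the hypothesis $m\equiv3\pmod4$ is used (for $m\equiv1\pmod4$ this coefficient vanishes and the statement fails). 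The two identities in the proof of Lemma~\ref{lemma:from_L_to_D}, taken at $\alpha=1$, read $\bigl(D_1(x^m)\bigr)'=g_1'\circ T$ and $\bigl(D_1(x^m)\bigr)^{[2]}=g_1'\circ T+g_1^{[2]}\circ T$, whence
\begin{align*}
g_1'\circ T &= (x+1)^{m-1}+x^{m-1},\\
g_1^{[2]}\circ T &= (x+1)^{m-2}+x^{m-2}+(x+1)^{m-1}+x^{m-1}=x(x+1)\bigl((x+1)^{m-3}+x^{m-3}\bigr).
\end{align*}

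\emph{Conclusion.} Suppose $w_0\in\overline{\mathbb F}_2$ were a common root of $g_1'$ and $g_1^{[2]}$, and pick $x_0\in\overline{\mathbb F}_2$ with $x_0^2+x_0=w_0$. If $x_0\in\{0,1\}$ then $w_0=0$, and the first formula gives $g_1'(0)=1\neq0$, a contradiction. Otherwise put $u:=(x_0+1)/x_0$; then $u\neq0$ and $u\neq1$, and the vanishing of $g_1'$ and of $g_1^{[2]}$ at $w_0$ become $u^{m-1}=1$ and $u^{m-3}=1$, hence $u^2=1$ and so $u=1$ in characteristic $2$ — a contradiction. Therefore $g_1'$ and $g_1^{[2]}$ have no common root, i.e. $L_1(x^m)$, and by the reduction step $L_{\alpha}(x^m)$ for every $\alpha\in\mathbb F_q^{\ast}$, has non degenerate critical points.

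The argument is short once these formulas are in place; the step I would carry out most carefully — and the only genuine obstacle — is the Hasse–Schmidt computation of $\bigl(D_1(x^m)\bigr)^{[2]}$ and the resulting factorisation $g_1^{[2]}\circ T=x(x+1)\bigl((x+1)^{m-3}+x^{m-3}\bigr)$, together with the remark that the congruence $m\equiv3\pmod4$ is precisely what makes $\binom{m}{2}$ odd.
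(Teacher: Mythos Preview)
Your proof is correct. The paper's argument is shorter and more direct: it invokes the \emph{statement} of Lemma~\ref{lemma:from_L_to_D} (common roots of $(L_{\alpha}f)',(L_{\alpha}f)^{[2]}$ correspond to common roots of $(D_{\alpha}f)',(D_{\alpha}f)^{[2]}$) rather than the identities from its proof, and works directly at general $\alpha$ without any reduction step. One simply computes $(D_{\alpha}f)'=(x+\alpha)^{m-1}+x^{m-1}$ and $(D_{\alpha}f)^{[2]}=(x+\alpha)^{m-2}+x^{m-2}$, and a common root $\omega$ would force $((\omega+\alpha)/\omega)^{m-1}=((\omega+\alpha)/\omega)^{m-2}=1$, hence $(\omega+\alpha)/\omega=1$ and $\alpha=0$. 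Your route reaches the same conclusion ($u^{m-1}=u^{m-3}=1\Rightarrow u=1$) but passes through the scaling reduction, the explicit computation of $g_1^{[2]}\circ T$, and the factorisation $x(x+1)\bigl((x+1)^{m-3}+x^{m-3}\bigr)$, none of which are needed once Lemma~\ref{lemma:from_L_to_D} is available as a black box. Both approaches use that $\binom{m}{2}$ is odd, which is where $m\equiv 3\pmod 4$ enters.
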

\begin{proof}
Using Lemma \ref{lemma:from_L_to_D} we look for the common roots of 
$(D_{\alpha}f)'$ and $(D_{\alpha}f)^{[2]}$. 
We compute 
$(D_{\alpha}f)'= 
(x+ \alpha)^{m-1}+x^{m-1}$ and 
$(D_{\alpha}f)^{[2]} = 
(x+ \alpha)^{m-2}+x^{m-2}$.
Hence, if $\omega \in 
\overline{\mathbb F}_2$ 
was
a common root of 
$(D_{\alpha}f)'$ and $(D_{\alpha}f)^{[2]}$ then
 we would have 
$\left( (\omega+\alpha)  / \omega \right)^{m-1}=\left( (\omega+\alpha) / \omega \right)^{m-2}=1$, 
and so
$\alpha=0$.
\end{proof}

Now we are enable to prove Proposition \ref{proposition:condition_a}.
\begin{proof}
Lemma \ref{lemma:from_L_to_D} enables us to study 
$\tilde{R}:=\Res \left( (D_{\alpha}f)' , (D_{\alpha}f)^{[2]} \right)$
rather than $\Res \left( (L_{\alpha}f)' , (L_{\alpha}f)^{[2]} \right)$.
Using the homogeneity results given by Lemma \ref{lemma:R_is_homogeneous}
we know that
there is at most one term in $\tilde{R}$
of degree at least $m(m-3)$ in $\alpha$, precisely
$a_0^{2(m-3)} \alpha^{m(m-3)}$.
We study whether this term appears or not.

By Lemma \ref{lemma:R_x_m}, for nonzero $\alpha$ the critical points of $L_{\alpha}(x^m)$ 
are non degenerate, so $\tilde{R}(a_0=1,a_1=0,\ldots,a_m=0,\alpha=1) \neq 0 $ 
and
 this term does appear.
Choosing a polynomial 
 $f\in \mathbb{F}_q[x]$ of degree $m$
 amounts to choosing
 coefficients $a_0 ,\ldots,a_m$ in ${\mathbb F}_q$ with $a_0 \neq 0$. Thus we can consider $\tilde{R}$  as
a nonzero polynomial in $\alpha$ of degree $m(m-3)$ 
which
has at most 
$m(m-3)$ roots.
\end{proof}

\subsection{The condition (b)}
We use a similar strategy to prove that
for almost every choice of $\alpha$ the polynomial
$L_{\alpha}f$ has distinct critical values: we use an homogeneity
result and we study the case of  $L_{\alpha}(x^m)$. 
As it is a key point in our approach, we give equivalent 
conditions for $L_{\alpha}(x^m)$ to have distinct critical values.
Recall that we work with $m\equiv 3\pmod{4}$ and that we set $d=(m-1)/2$.

\begin{proposition}\label{proposition:condition_b}
Let $m$ be an integer such that $m\geqslant 7$ and $m \equiv 3 \pmod 4$.

\begin{enumerate}[label=(\roman*)]
\item If there exists $\alpha \in \overline{\mathbb F}_2^{\ast}$ such that  $L_{\alpha}(x^m)$ has distinct critical values
then it holds true for any $\alpha \in \overline{\mathbb F}_2^{\ast}$.

\item Suppose that for any $\alpha \in \overline{\mathbb F}_2^{\ast}$ (or equivalently for  $\alpha=1$) the polynomial $L_{\alpha}(x^m)$ has distinct critical values.
Let $f\in \mathbb{F}_q[x]$ be a polynomial of degree $m$.
Then $L_{\alpha}f$ has distinct critical values
except for at most $(5m-1)(m-3)(m-7)/64$ values
of $\alpha \in \overline{\mathbb F}_2$.
\end{enumerate}
\end{proposition}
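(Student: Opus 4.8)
The plan is to mirror the strategy already used for condition (a) in Proposition \ref{proposition:condition_a}: produce a single auxiliary polynomial $P(\alpha) \in \mathbb{F}_q[\alpha]$ whose non-vanishing at a given $\alpha$ forces $L_\alpha f$ to have distinct critical values, show that $P$ is not the zero polynomial by specializing $f = x^m$, control its degree in $\alpha$, and conclude that at most $\deg_\alpha P$ values of $\alpha$ are bad.

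First I would set up part (i). The polynomial $\Pi(L_\alpha(x^m))$ — the product of pairwise differences of critical values, which is, up to a nonzero constant, the quantity whose vanishing detects coincident critical values — is, by Lemma \ref{coeff_b_i}, a polynomial in the coefficients $b_i$ of $L_\alpha(x^m)$, each of which is (for $f = x^m$, so all $a_j = 0$ except $a_0 = 1$) a monomial in $\alpha$ of $w$-degree $2i+1$. Hence $b_i(x^m) = \lambda_i \alpha^{2i+1}$ for constants $\lambda_i \in \mathbb{F}_2$. A scaling argument then shows that replacing $\alpha$ by $c\alpha$ transforms $L_\alpha(x^m)$ into a scalar-reparametrized version of $L_1(x^m)$: concretely, if $g(y) = L_1(x^m)$ satisfies $g(x(x+1)) = D_1(x^m)$, then $L_\alpha(x^m)(y) = c^{-(m-1)} g(c^{-2} y)$ with $c = \alpha$ up to sorting out the exact homogeneity, and a substitution of this form sends critical points to critical points and scales all critical values by the same nonzero factor. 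Therefore $L_\alpha(x^m)$ has distinct critical values if and only if $L_1(x^m)$ does; this is part (i). (The cleanest way to phrase this is via the identity $D_\alpha(x^m)(x) = \alpha^m D_1(x^m)(x/\alpha)$, which gives $L_\alpha(x^m)(y) = \alpha^{m} L_1(x^m)(y/\alpha^2) \cdot \alpha^{-?}$ — I would chase the exact exponent through $g(x(x+\alpha)) = D_\alpha f(x)$ rather than guess it.)

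For part (ii), I would introduce $\Pi := \Pi(L_\alpha f) \in \mathbb{F}_2[a_0,\dots,a_m,\alpha]$, expressed via symmetric functions of the (double) roots $\tau_i$ of $(L_\alpha f)'$. By Lemma \ref{coeff_b_i} each $b_i$ is $w$-homogeneous of degree $2i+1$, and one can check that $\Pi$, being a symmetric-function expression in the critical values, is $w$-homogeneous of some degree that is computable from $m$; likewise it is $\tilde w$-homogeneous. As in the proof of Proposition \ref{proposition:condition_a}, homogeneity pins down the monomial of top $\alpha$-degree: it must be (a scalar times) $a_0^{N} \alpha^{M}$ for the appropriate exponents $N, M$, and the specialization $f = x^m$ — i.e. $a_0 = 1$, all other $a_j = 0$ — recovers $\Pi(L_\alpha(x^m))$, which is nonzero by hypothesis (via part (i)). Hence this top term genuinely appears, so for any degree-$m$ polynomial $f$ (meaning $a_0 \neq 0$) the specialization of $\Pi$ in the $a_j$ is a nonzero polynomial in $\alpha$ of degree exactly $M$, with at most $M$ roots. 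The remaining task is purely bookkeeping: show $M = (5m-1)(m-3)(m-7)/64$. I expect to get this by combining the $w$-degree of $\Pi$, its $\tilde w$-degree (which caps the $a$-exponents and hence, after fixing a monomial of maximal $\alpha$-power, forces the $a$-part to be $a_0^N$ with $N$ determined), and the count $[\frac{d-1}{2}]$ of critical points — note $d = (m-1)/2$ so $[\frac{d-1}{2}] = (m-3)/4$ since $m \equiv 3 \pmod 4$, and the number of pairs $i \neq j$ is $\frac{(m-3)(m-7)}{16}$; the factor $(5m-1)/4$ should emerge as (roughly) the $\alpha$-degree contributed by each difference $g(\tau_i) - g(\tau_j)$ after clearing denominators from the $\tau_i$.

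The main obstacle will be the degree computation in part (ii): the $\tau_i$ are not polynomials in the $a_j$ and $\alpha$, so $\Pi$ must be handled as a symmetric function of the roots of $(L_\alpha f)'$, and extracting its precise $w$-degree in $\alpha$ (as opposed to a crude upper bound) requires care about how the resultant-type expression $\prod_{i\neq j}(g(\tau_i)-g(\tau_j))$ depends on the coefficients of $g$ and $g'$. A safe route is to write $\Pi$ as $\pm\operatorname{Res}_y\!\big((L_\alpha f)'(y),\, h(y)\big)/(\text{leading coeff})^{?}$ for a suitable resolvent $h$ encoding "$g(y)$ equals some other critical value", or more elementarily to bound the $\alpha$-degree from the known $w$-degree of $b_0,\dots,b_d$ together with the scaling behavior of the $\tau_i$ under $\alpha \mapsto c\alpha$ (each $\tau_i$ scales like $\alpha^2$, since the roots of $(L_\alpha f)'$ track the reparametrization), and then the homogeneity argument forces equality. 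The rest — surjectivity of $L_\alpha$ from Proposition \ref{polynomeg} guaranteeing that the generic $f$ of degree $m$ really does hit all configurations, and the congruence arithmetic making the fraction an integer — is routine.
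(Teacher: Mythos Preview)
Your proposal is correct and follows essentially the same route as the paper: part (i) via the scaling $D_\alpha(x^m)(x) = \alpha^m\, D_1(x^m)(x/\alpha)$ (the paper does this at the level of the critical-value equations, multiplying by $(\alpha/\alpha')^{m-1}$ and $(\alpha/\alpha')^{m}$ after invoking Lemma~\ref{lemma:moving_from_L_to_D}), and part (ii) via $w$- and $\tilde w$-homogeneity of $\Pi(L_\alpha f)$ plus specialization to $x^m$. The only sharpening needed is that $\Pi$ is not itself a polynomial in $a_0,\dots,a_m,\alpha$ --- the paper clears denominators by working with $b_0^{N}\,\Pi(L_\alpha f)$ where $N = d\binom{(d-1)/2}{2}$, and Lemma~\ref{lemma:Pi_is_homogeneous} shows this is $w$-homogeneous of degree $(5d+2)\binom{(d-1)/2}{2} = (5m-1)(m-3)(m-7)/64$ with each monomial containing exactly $(d+2)\binom{(d-1)/2}{2}$ factors $a_i$, which is precisely your ``bookkeeping'' step.
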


\begin{proof} Let $\alpha \in \overline{\mathbb F}_2^{\ast}$ such that  $L_{\alpha}(x^m)$ has distinct critical values. Now let $\alpha' \in \overline{\mathbb F}_2^{\ast}$ and let us show that $L_{\alpha'}(x^m)$ has distinct critical values. We use the characterization given by Lemma \ref{lemma:moving_from_L_to_D}:  suppose that 
$(\tau,\eta) \in  \left( \overline{\mathbb{F}}_2 \right)^2$ are such that
\begin{equation}\label{critic1}
\tau^{m-1}+(\tau+\alpha')^{m-1}=\eta^{m-1}+(\eta+\alpha')^{m-1}=0
\end{equation}
and
\begin{equation}\label{critic2}
\tau^{m}+(\tau+\alpha')^{m}=\eta^{m}+(\eta+\alpha')^{m}.
\end{equation}

Mutliply Equation (\ref{critic1}) by $\left(\frac{\alpha}{\alpha'}\right)^{m-1}$ and Equation (\ref{critic2}) by $\left(\frac{\alpha}{\alpha'}\right)^{m}$, we obtain that $\frac{\alpha}{\alpha'}\eta\in\{\frac{\alpha}{\alpha'}\tau,\frac{\alpha}{\alpha'}\tau+\alpha\}$ i.e. $\eta\in\{\tau, \tau+\alpha'\}$ which gives the result.

To prove assertion $(ii)$ we follow the strategy of the proof of 
Proposition \ref{proposition:condition_a}.
Consider 
$f=\sum_{i=0}^{m} a_{m-i}x^i
\in \mathbb{F}_2[a_0,\ldots,a_m] [x]$ 
and $L_{\alpha}f=\sum_{i=0}^db_{d-i}x^{i} \in \mathbb{F}_2[b_0,\ldots,b_d,\alpha] [x]$. 
By Lemma \ref{lemma:Pi_is_homogeneous}, when setting $N= d \binom{(d-1)/2}{2}$ we can see 
  $b_0^N \times\Pi  \left(L_{\alpha} f\right)$ as a polynomial of
${\mathbb{F}_2}[a_0,\ldots,a_m,\alpha]$.
Now we use the 
homogeneity result of Lemma \ref{lemma:Pi_is_homogeneous}
to know that 
this last polynomial has at most one term
of degree at least
$(5d+2)\binom{(d-1)/2}{2}$ in $\alpha$ .
Precisely, this term is possibly the term
$a_0^{(d+2)\binom{(d-1)/2}{2}} \alpha^{ (5d+2)\binom{(d-1)/2}{2}}$.

In order to know if this term appears or not, we evaluate this polynomial
at $a_0=1$ and $a_i=0$ for all $i>0$ which amounts to determine if 
 the polynomial $L_{\alpha}(x^m)$ has distinct critical values, which is true
 by hypothesis.
 Now fix a polynomial $f\in \mathbb{F}_q[x]$ of degree $m$ and see  $b_0^N \times\Pi (L_{\alpha}f)$
 as a polynomial of $\mathbb{F}_2 [\alpha]$.
 So we know its degree and 
thus $L_{\alpha}f$ has distinct critical values
except for at most $(5d+2) \binom{(d-1)/2}{2}$ values 
of $\alpha \in \overline{\mathbb F}_2$.
Then we conclude using the relation between $m$ and $d$.
\end{proof}

The following lemma gives a condition on $D_{\alpha}f$ for $L_{\alpha}f$ to have distinct critical values.

\begin{lemma}\label{lemma:moving_from_L_to_D}
Let $f\in \mathbb{F}_q[x]$.
For all $\alpha \in \mathbb{F}_q^{\ast}$ 
the polynomial $L_{\alpha}f$ has distinct critical values if and only if
for all $(\tau,\eta) \in  \left( \overline{\mathbb{F}}_2 \right)^2$, 
$(D_{\alpha} f)'(\tau)=(D_{\alpha} f)'(\eta)=0$
and  $D_{\alpha} f(\tau)=D_{\alpha} f(\eta)$ imply
$\tau=\eta$ or $\tau=\eta + \alpha$.
\end{lemma}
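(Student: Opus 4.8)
The plan is to translate the statement about $L_\alpha f$ into a statement about $D_\alpha f = L_\alpha f \circ T_\alpha$, where $T_\alpha(x) = x(x+\alpha)$, by exploiting the chain-rule identities already established in the proof of Lemma \ref{lemma:from_L_to_D}, namely $(D_\alpha f)' = \alpha\,(L_\alpha f)'\circ T_\alpha$. Recall that the critical points of $L_\alpha f$ are the roots of $(L_\alpha f)'$; since $\alpha \neq 0$, a point $u \in \overline{\mathbb F}_2$ is a critical point of $L_\alpha f$ if and only if $u = T_\alpha(x)$ for some $x$ with $(D_\alpha f)'(x) = 0$. Here one uses that $T_\alpha \colon \overline{\mathbb F}_2 \to \overline{\mathbb F}_2$ is surjective (it has degree $2$), so every critical point $u$ of $L_\alpha f$ is hit, and moreover $T_\alpha(x) = T_\alpha(x') $ if and only if $x' \in \{x, x+\alpha\}$, since $T_\alpha(x) = T_\alpha(x')$ rewrites as $(x+x')(x+x'+\alpha) = 0$.

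Next I would carry out the two-way implication. For the ``only if'' direction, suppose $L_\alpha f$ has distinct critical values, and let $(\tau,\eta)$ satisfy $(D_\alpha f)'(\tau) = (D_\alpha f)'(\eta) = 0$ and $D_\alpha f(\tau) = D_\alpha f(\eta)$. Set $u = T_\alpha(\tau)$ and $v = T_\alpha(\eta)$. From the identity for $(D_\alpha f)'$ and $\alpha\neq 0$ we get $(L_\alpha f)'(u) = (L_\alpha f)'(v) = 0$, so $u,v$ are critical points of $L_\alpha f$; and $L_\alpha f(u) = D_\alpha f(\tau) = D_\alpha f(\eta) = L_\alpha f(v)$, so $u = v$ by the distinct-critical-values hypothesis. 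Then $T_\alpha(\tau) = T_\alpha(\eta)$ forces $\eta \in \{\tau, \tau+\alpha\}$, as required. For the ``if'' direction, suppose the displayed implication holds for $D_\alpha f$, and let $u, v$ be critical points of $L_\alpha f$ with $L_\alpha f(u) = L_\alpha f(v)$. Choose $\tau, \eta$ with $T_\alpha(\tau) = u$, $T_\alpha(\eta) = v$ (possible by surjectivity of $T_\alpha$). Then $(D_\alpha f)'(\tau) = \alpha (L_\alpha f)'(u) = 0$ and likewise $(D_\alpha f)'(\eta) = 0$, while $D_\alpha f(\tau) = L_\alpha f(u) = L_\alpha f(v) = D_\alpha f(\eta)$; hence $\eta \in \{\tau, \tau+\alpha\}$, and applying $T_\alpha$ gives $v = T_\alpha(\eta) = T_\alpha(\tau) = u$. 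So the critical values of $L_\alpha f$ are distinct.

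There is essentially no hard obstacle here; the statement is a clean transfer lemma and the whole content is the surjectivity of $T_\alpha$ together with the description of its fibers, plus the derivative identity borrowed from Lemma \ref{lemma:from_L_to_D}. The only point demanding a moment's care is making sure the argument runs over all of $\overline{\mathbb F}_2$ rather than just $\mathbb F_q$: surjectivity of $T_\alpha$ as a map of the algebraic closure is what is needed, and that is immediate since $T_\alpha(x) = w$ is the equation $x^2 + \alpha x + w = 0$, which always has roots in $\overline{\mathbb F}_2$. One should also note explicitly that when $\eta = \tau$ the conclusion $v = u$ is trivial, and when $\eta = \tau + \alpha$ one still has $T_\alpha(\eta) = T_\alpha(\tau)$, so both cases of the $D_\alpha f$-side implication yield $u = v$ on the $L_\alpha f$-side. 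I would write this up in under half a page.
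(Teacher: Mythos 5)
Your proof is correct and follows exactly the paper's argument: the identity $(D_{\alpha}f)'=\alpha\,(L_{\alpha}f)'\circ T_{\alpha}$ together with the fact that $T_{\alpha}(\tau)=T_{\alpha}(\eta)$ if and only if $\eta\in\{\tau,\tau+\alpha\}$, plus surjectivity of $T_{\alpha}$ on $\overline{\mathbb{F}}_2$. You merely spell out the two implications that the paper leaves as ``the result follows.''
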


\begin{proof}
We have $L_{\alpha} f \circ T_{\alpha} = D_{\alpha} f$, so
$(D_{\alpha} f)' = \alpha \left( L_{\alpha}f \right)  ' \circ T_{\alpha}$
where $T_{\alpha}(x)=x(x+ \alpha)$.
The result follows noticing that $T_{\alpha} (\tau) = T_{\alpha} (\eta) $
if and only if $\tau \in \{ \eta, \eta+ \alpha\}$.
\end{proof}

\begin{lemma}\label{lemma:Pi_is_homogeneous}
Let $m\geqslant 7$ such that $m \equiv 3 \pmod 4$  and set $N= d \binom{(d-1)/2}{2}$.
We consider the polynomials 
$f=\sum_{k=0}^{m} a_{m-k}x^k
\in \mathbb{F}_2[a_0,\ldots,a_m] [x]$ 
and $L_{\alpha}f=\sum_{k=0}^db_{d-k}x^{k} \in \mathbb{F}_2[b_0,\ldots,b_d,\alpha] [x]$.
Then  $b_0^N \times\Pi  \left(L_{\alpha} f\right)$ is a polynomial of
${\mathbb{F}_2}[a_0,\ldots,a_m,\alpha]$ whose 
each term contains a product of $(d+2) \binom{(d-1)/2}{2}$ terms $a_i$.
This polynomial is also homogeneous  of degree $(5d+2) \binom{(d-1)/2}{2}$
 when considering 
the weight $w$ such that $w(\alpha)=1$
and 
$w(a_i)=i$.
\end{lemma}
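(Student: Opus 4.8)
The plan is to treat $\Pi(L_\alpha f)$ exactly as the resultant $\tilde R$ was handled in Lemma \ref{lemma:R_is_homogeneous}, tracking the two gradings simultaneously. First I would recall that $L_\alpha f = \sum_{k=0}^d b_{d-k}x^k$ and that by Lemma \ref{coeff_b_i} each coefficient $b_i$ is a homogeneous polynomial in $\mathbb{F}_2[\alpha,a_0,\ldots,a_m]$ of weighted degree $2i+1$ for the grading $w$ (odd case, $m\equiv 3\pmod 4$); simultaneously, inspecting the triangular system \eqref{relations} shows that $b_i$ is homogeneous of degree exactly $1$ for the grading $\tilde w$ with $\tilde w(\alpha)=0$, $\tilde w(a_j)=1$ — every term of $b_i$ is an $\mathbb{F}_2$-linear combination of the $a_j$ times a power of $\alpha$. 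So $b_i$ carries bidegree $(2i+1,1)$.

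Next I would analyze $\Pi(L_\alpha f) = \prod_{i\neq j}(g(\tau_i)-g(\tau_j))$ where $g=L_\alpha f$ and $\tau_1,\ldots,\tau_{(d-1)/2}$ are the double roots of $g'$, so there are $r:=(d-1)/2$ critical points and $\binom r2$ unordered pairs, hence $\Pi(L_\alpha f)$ is (up to sign) a product over $\binom r2$ pairs, each factor being $(g(\tau_i)-g(\tau_j))^2$ after symmetrizing — i.e. $\Pi(L_\alpha f)=\prod_{i<j}(g(\tau_i)-g(\tau_j))^2$. By the theory of symmetric functions this is a polynomial in the coefficients $b_0,\ldots,b_d$ of $g$, but because $\tau_i$ are roots of $g'$ whose leading coefficient is (a constant times) $b_0$, clearing denominators introduces a controlled power of $b_0$: that is the role of $N=d\binom{(d-1)/2}{2}$. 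I would make the denominator count explicit by writing $g(\tau_i)-g(\tau_j)$ over the common denominator coming from the elementary symmetric functions of the $\tau$'s, each of which is a ratio of a coefficient of $g'$ to its leading coefficient $d b_0 = b_0$ (since $d$ is odd, hence a unit). Counting: each of the $\binom r2$ squared factors needs denominator $b_0^{?}$, and summing gives $b_0^N\Pi(L_\alpha f)\in\mathbb{F}_2[b_0,\ldots,b_d,\alpha]$, which via the $b_i$'s becomes a polynomial in $\mathbb{F}_2[a_0,\ldots,a_m,\alpha]$.

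Then the two homogeneity claims follow by bidegree bookkeeping. For the $\tilde w$-grading: $g(\tau_i)$ has $\tilde w$-degree $1$ (it is $\mathbb{F}_2$-linear in the $a$'s once we substitute, since each $b_i$ is), so each factor $g(\tau_i)-g(\tau_j)$ has $\tilde w$-degree $1$, each squared factor degree $2$, and the product over $\binom r2$ pairs has $\tilde w$-degree $2\binom r2 = 2\binom{(d-1)/2}{2}$; multiplying by $b_0^N$ with $\tilde w(b_0)=1$ adds $N=d\binom{(d-1)/2}{2}$, for total $\tilde w$-degree $(d+2)\binom{(d-1)/2}{2}$ — this is precisely the statement that each term contains a product of $(d+2)\binom{(d-1)/2}{2}$ of the $a_i$. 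For the $w$-grading I would use that $L_\alpha f(x)$, as a polynomial in $x$ with coefficients in $\mathbb{F}_2[\alpha,a_0,\ldots,a_m]$, is $w$-homogeneous of $w$-degree $2d+1$ when one assigns $w(x)=2$ (since $b_{d-k}x^k$ has degree $(2(d-k)+1)+2k=2d+1$); hence a critical point $\tau$ (a root of $g'$, which is $w$-homogeneous of degree $2d-1$ in $x$ with the same convention) behaves like a quantity of $w$-weight $2$, and $g(\tau)$ has $w$-weight $2d+1$. Thus each factor $g(\tau_i)-g(\tau_j)$ has $w$-weight $2d+1$, each squared factor $2(2d+1)=4d+2$, the product $\binom r2$ of them has $w$-weight $(4d+2)\binom{(d-1)/2}{2}$, and multiplying by $b_0^N$ with $w(b_0)=1$ adds $N=d\binom{(d-1)/2}{2}$, yielding total $w$-degree $(5d+2)\binom{(d-1)/2}{2}$, as claimed.

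The main obstacle is making rigorous the passage from ``symmetric function of the critical points $\tau_i$'' to ``polynomial in the $b_i$ after multiplying by $b_0^N$'', and pinning down that the exponent of $b_0$ needed is exactly $N=d\binom{(d-1)/2}{2}$ rather than merely bounded by it; this requires a careful look at the degrees appearing when one expands $\prod_{i<j}(g(\tau_i)-g(\tau_j))^2$ in terms of the power sums of the $\tau_i$ and re-expresses those via Newton's identities in terms of the coefficients of $g'$ divided by its leading coefficient. Once the clean bigraded statement ``$b_i$ has bidegree $(2i+1,1)$'' is in hand, though, both homogeneity assertions reduce to the additive bookkeeping above, and the fact that $d$ is odd (so the leading coefficient of $g'$ is the unit $b_0$ up to the unit $d\in\mathbb{F}_2^\ast$) is what keeps everything defined over $\mathbb{F}_2$. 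I would also remark that the argument does not need the $\tau_i$ to be distinct or even simple as abstract symbols: $\Pi(L_\alpha f)$ is manifestly a symmetric polynomial expression in them with coefficients in $\mathbb{F}_2[b_0,\ldots,b_d]$, so the homogeneity is a formal consequence and holds identically, independently of whether the generic $L_\alpha f$ is actually Morse.
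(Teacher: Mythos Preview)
Your approach is essentially the paper's: express $\Pi(L_\alpha f)$ via the symmetric functions of the $\tau_i^2$, substitute $e_k(\tau^2)=b_{2k}/b_0$, clear $b_0$-denominators, and track the two gradings. The only cosmetic difference is that the paper first uses an auxiliary weight $w(b_i)=i$, $w(\tau_j)=1$ on $\mathbb{F}_2[b_0,\ldots,b_d,\tau_1,\ldots,\tau_r]$ and translates to the $a$-grading at the end via Lemma~\ref{coeff_b_i}, whereas you work in the $a$-grading throughout by assigning $w(x)=2$; the arithmetic is the same. Your stated worry that the $b_0$-exponent must be \emph{exactly} $N$ is unnecessary: the upper bound $M\leqslant N$ on the denominator of each term suffices, since the substitution $e_k\mapsto b_{2k}/b_0$ has both $\tilde w$-degree and $w$-degree zero, so after multiplying by $b_0^N$ every monomial automatically acquires the claimed bidegree $\bigl((5d+2)\binom{(d-1)/2}{2},\,(d+2)\binom{(d-1)/2}{2}\bigr)$ regardless of how many $e_k$-factors it originally contained.
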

\begin{proof}
We set $\tau_1, \ldots , \tau_{(d-1)/2}$ the double roots 
of the polynomial $(L_{\alpha}f)'$, and 
$\Pi  \left(L_{\alpha} f\right)=\prod_{i \neq  j} \left( L_{\alpha}f(\tau_i) - L_{\alpha}f(\tau_j) \right)$.
Then we have 
$$\Pi  \left(L_{\alpha} f\right)=\prod_{i <  j} \left( \sum_{k=0}^{d} b_{d-k}^2 
(\tau_i^{2k}+\tau_j^{2k}) \right).$$
So $\Pi  \left(L_{\alpha} f\right)$ is an homogeneous polynomial of degree $2d \binom{(d-1)/2}{2}$
when considering 
the weight $w$ such that $w(b_i)=i$ for all $i$
and 
$w(\tau_j)=1$ for all $j$.
We also have that 
 $\Pi  \left(L_{\alpha} f\right) \in \mathbb{F}_2[b_0,\ldots,b_d,\tau_1^2,\ldots,\tau_{(d-1)/2}^2] $,
 and each term of $\Pi  \left(L_{\alpha} f\right)$ contains a product of 
 exactly $\binom{(d-1)/2}{2}$ terms $b_i^2$.
 Moreover, using the invariance under the action of $\Sgoth_{(d-1)/2}$
 and the theorem of symmetric functions, we obtain that 
 $\Pi  \left(L_{\alpha} f\right) \in \mathbb{F}_2[b_0,\ldots,b_d, \sigma_1, \ldots, \sigma_{(d-1)/2}]$
 where $\sigma_1 = \sum \tau_i^2$, 
 $\sigma_2 = \sum_{i <j} \tau_i^2 \tau_j^2$,...
Using $(L_{\alpha}f)'=b_0 \prod_{i=1}^{(d-1)/2} \left( x^2+\tau_i^2 \right) $ it follows that
$\Pi  \left(L_{\alpha} f\right) \in \mathbb{F}_2[b_0,\ldots,b_d, \frac{b_2}{b_0},
 \frac{b_4}{b_0}, \ldots , \frac{b_{d-1}}{b_0} ]$.
The denominator is at worst $b_0^{N}$ 
(it happens if the $\tau_i$ 
are the only terms contributing to the degree, and if they
only give rise
to terms $b_2/b_0$).
We deduce that $b_0^N \times\Pi  \left(L_{\alpha} f\right)$ is a polynomial
in the $b_i$, 
and that each term is a product of $(d+2)\binom{(d-1)/2}{2}$ 
indeterminates $b_i$.
Furthermore, it is 
an homogeneous polynomial of degree $2d \binom{(d-1)/2}{2}$
when considering 
the weight $w$ such that $w(b_i)=i$ for all $i$.

By Lemma \ref{coeff_b_i}, 
$b_i$  is an homogeneous polynomial of
$\mathbb{F}_2[a_0,\ldots,a_m,\alpha]$ of degree $2i+1$  
 when considering the weight $w$ such that 
$w(a_i)=i$ and $w(\alpha)=1$.
We conclude that
 $b_0^N \times\Pi  \left(L_{\alpha} f\right)$ is an homogeneous polynomial
of degree $2 \times 2d \binom{(d-1)/2}{2} + (d+2)\binom{(d-1)/2}{2}$.
\end{proof}

\bigskip

Finally we reach the goal of this section: Proposition \ref{proposition:condition_a} and Proposition \ref{proposition:condition_b} enable us to bound the number of $\alpha$ such that $L_{\alpha}f$ is Morse.

\begin{theorem}\label{L_alpha_Morse}
Let $m \geqslant 7$ such that $m  \equiv 3 \pmod 4$ and such that  the polynomial $L_{1}(x^m)$ has distinct critical values.
Then
for all $f \in {\mathbb F}_{2^n}[x]$ of degree $m$ the number of elements $\alpha$ in 
${\mathbb F}_{2^n}^{\ast}$ such that $L_{\alpha}f$ is Morse is 
at least 
$2^n-1-\frac{1}{64}(m-3)(5m^2+28m+7)$.
\end{theorem}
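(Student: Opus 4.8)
The plan is to combine the two counting bounds already established in this section. Recall that a polynomial $g$ of odd degree $d$ not divisible by $2$ is Morse precisely when it satisfies condition (a) (non-degenerate critical points) and condition (b) (distinct critical values); condition (c) is automatic here since $d=(m-1)/2$ is odd (as $m\equiv 3\pmod 4$) and the characteristic is $2$. So for a fixed $f\in{\mathbb F}_{2^n}[x]$ of degree $m$, the set of $\alpha\in{\mathbb F}_{2^n}^{\ast}$ for which $L_{\alpha}f$ fails to be Morse is contained in the union of the ``bad sets'' coming from conditions (a) and (b).

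First I would invoke Proposition \ref{proposition:condition_a}: the critical points of $L_{\alpha}f$ are non-degenerate for all but at most $m(m-3)$ values of $\alpha\in\overline{\mathbb F}_2$, hence for all but at most $m(m-3)$ values of $\alpha\in{\mathbb F}_{2^n}^{\ast}$. Next, since $L_1(x^m)$ has distinct critical values by hypothesis, part (i) of Proposition \ref{proposition:condition_b} guarantees that $L_{\alpha}(x^m)$ has distinct critical values for every $\alpha\in\overline{\mathbb F}_2^{\ast}$, so the hypothesis of part (ii) is met; part (ii) then gives that $L_{\alpha}f$ has distinct critical values for all but at most $(5m-1)(m-3)(m-7)/64$ values of $\alpha\in\overline{\mathbb F}_2$, and in particular for all but that many $\alpha\in{\mathbb F}_{2^n}^{\ast}$.

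Then I would simply add: the number of $\alpha\in{\mathbb F}_{2^n}^{\ast}$ such that $L_{\alpha}f$ is \emph{not} Morse is at most
$$m(m-3)+\frac{(5m-1)(m-3)(m-7)}{64}=\frac{(m-3)\bigl(64m+(5m-1)(m-7)\bigr)}{64}=\frac{(m-3)(5m^2+28m+7)}{64},$$
after expanding $(5m-1)(m-7)=5m^2-36m+7$ and combining with $64m$. Subtracting this from $\#{\mathbb F}_{2^n}^{\ast}=2^n-1$ gives the claimed lower bound $2^n-1-\frac{1}{64}(m-3)(5m^2+28m+7)$.

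The only real content is the bookkeeping: one must check that the two exceptional sets indeed both sit inside ${\mathbb F}_{2^n}^\ast\subseteq\overline{\mathbb F}_2^\ast$ so the cardinality bounds transfer, that condition (c) of Morseness is free here, and that the arithmetic simplification of the two error terms is correct. There is no genuine obstacle — this theorem is a clean corollary of Propositions \ref{proposition:condition_a} and \ref{proposition:condition_b}, so I would keep the proof to a few lines consisting of the union bound and the displayed computation above.
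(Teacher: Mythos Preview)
Your proposal is correct and follows essentially the same route as the paper: verify that condition (c) is automatic because $d=(m-1)/2$ is odd, quote Propositions \ref{proposition:condition_a} and \ref{proposition:condition_b} to bound the number of $\alpha$ failing (a) and (b), and add the two bounds. Your only addition is making the arithmetic simplification $m(m-3)+\frac{(5m-1)(m-3)(m-7)}{64}=\frac{(m-3)(5m^2+28m+7)}{64}$ explicit, which the paper leaves to the reader.
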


\begin{proof}
Let $f \in {\mathbb F}_{2^n}[x]$ of degree $m$ and let $\alpha\in{\mathbb F}_{2^n}^{\ast}$. The polynomial $L_{\alpha}f$ is Morse if the three conditions (a), (b) and (c) of Definition \ref{Definition_Morse} hold. As $m  \equiv 3 \pmod 4$ the condition (c) is satisfied.
Indeed,  $D_{\alpha}f$ has degree $m-1$ by Proposition  \ref{degree_of_g} and thus $L_{\alpha}f$ has odd degree $(m-1)/2$. Moreover the condition (a) fails for at most $m(m-3)$ values of $\alpha$ by Proposition \ref{proposition:condition_a}. Furthermore the condition (b) fails for at most $(5m-1)(m-3)(m-7)/64$ values of $\alpha$ by Proposition \ref{proposition:condition_b}. Thus $L_{\alpha}f$ is not Morse for at most $m(m-3) + (5m-1)(m-3)(m-7)/64$ values of $\alpha$.
\end{proof}

\subsection{Conditions for  $L_1(x^m)$ to have distinct critical values.}\label{EnsembleM}

The condition (b) 
which is essential for the proofs
of our main results leads
by Proposition \ref{proposition:condition_b}  to study for which exponents $m$  the polynomial  $L_{\alpha} (x^m)$
has distinct critical values. By the first assertion of Proposition \ref{proposition:condition_b} we are reduced to consider 
the polynomial  $L_{1} (x^m)$.
Then it is natural to introduce the following set $\mathcal{M}$ and to 
look for practical characterizations.

\begin{definition}\label{etlabete}
Let $\mathcal M$ be the set of odd integers $m$ such that  the polynomial $L_{1}(x^m)$ has distinct critical values or equivalently such that  for any $\alpha \in \overline{\mathbb F}_2^{\ast}$ the polynomial $L_{\alpha}(x^m)$ has distinct critical values.
\end{definition}

Lemma \ref{lemma:moving_from_L_to_D} reduces the study of the critical values of 
$L_{\alpha}(x^m)$ 
to the study of equations involving
$D_{\alpha} \left( x^m \right)$ and $\left( D_{\alpha} \left( x^m \right) \right) '=x^{m-1}+(x+\alpha)^{m-1}$ for odd $m$.

The following proposition enables us to have a characterization 
of the elements of $\mathcal{M}$ in terms of 
roots of unity.

\begin{proposition}\label{proposition:condition_xm}
Let $m \geqslant 7$
be an odd integer. Whatever the choice of
$\alpha \in \overline{\mathbb F}_2^{\ast}$, the polynomial $L_{\alpha}(x^m)$ has distinct critical values
 if and only if the following condition is satisfied:
 
for $\zeta_1$ and $\zeta_2$ 
in $\overline{\mathbb{F}}_2 \setminus \{ 1 \}$,
the equalities 
$\zeta_1^{m-1}  = \zeta_2^{m-1} = 
\left( \frac{1+\zeta_1}{1+\zeta_2}\right)^{m-1} = 1
$ imply $\zeta_1=\zeta_2$ or $\zeta_1=\zeta_2^{-1}$.
\end{proposition}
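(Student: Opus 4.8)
The plan is to unwind the definition of ``distinct critical values'' using Lemma \ref{lemma:moving_from_L_to_D}, which reduces the question about $L_{\alpha}(x^m)$ to a statement about $D_{\alpha}(x^m)$ and its derivative. Concretely, since $D_{\alpha}(x^m) = x^m + (x+\alpha)^m$ and $(D_{\alpha}(x^m))' = x^{m-1} + (x+\alpha)^{m-1}$, the polynomial $L_{\alpha}(x^m)$ has distinct critical values if and only if for all $(\tau,\eta) \in (\overline{\mathbb F}_2)^2$ the simultaneous vanishing $\tau^{m-1}+(\tau+\alpha)^{m-1} = \eta^{m-1}+(\eta+\alpha)^{m-1} = 0$ together with $\tau^m + (\tau+\alpha)^m = \eta^m + (\eta+\alpha)^m$ forces $\tau = \eta$ or $\tau = \eta+\alpha$. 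By the first assertion of Proposition \ref{proposition:condition_b} (or Definition \ref{etlabete}) it suffices to treat one $\alpha$, so by homogeneity we may and do take $\alpha = 1$; I will keep $\alpha$ general where it costs nothing.

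The first substantive step is to rewrite the derivative condition. If $\tau$ is a critical point, note $\tau \neq 0$ and $\tau \neq \alpha$ is forced (if $\tau = 0$ then $(D_\alpha(x^m))'(\tau) = \alpha^{m-1}\neq 0$, similarly $\tau\neq\alpha$ since we are in characteristic $2$ where $\alpha=\tau+\alpha$ would need... actually just: the equation $\tau^{m-1} = (\tau+\alpha)^{m-1}$ has no solution with $\tau = 0$). So set $\zeta_1 := (\tau+\alpha)/\tau \in \overline{\mathbb F}_2^{\ast}$; the derivative condition at $\tau$ becomes exactly $\zeta_1^{m-1} = 1$, and $\zeta_1 \neq 1$ since $\alpha \neq 0$. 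Likewise define $\zeta_2 := (\eta+\alpha)/\eta$, so the derivative condition at $\eta$ is $\zeta_2^{m-1} = 1$, $\zeta_2 \neq 1$. Note that $\tau$ is recovered from $\zeta_1$ by $\tau = \alpha/(1+\zeta_1)$ (this is why we need $\zeta_1\neq 1$), and $\tau = \eta$ iff $\zeta_1 = \zeta_2$, while $\tau = \eta + \alpha$ iff $\alpha/(1+\zeta_1) = \alpha\zeta_2/(1+\zeta_2)$, i.e. $1+\zeta_2 = \zeta_2 + \zeta_1\zeta_2$, i.e. $\zeta_1\zeta_2 = 1$. Thus the dichotomy ``$\tau=\eta$ or $\tau=\eta+\alpha$'' translates precisely into ``$\zeta_1 = \zeta_2$ or $\zeta_1 = \zeta_2^{-1}$''.

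It remains to translate the value condition $\tau^m + (\tau+\alpha)^m = \eta^m + (\eta+\alpha)^m$ into the third equality $\left(\frac{1+\zeta_1}{1+\zeta_2}\right)^{m-1} = 1$. Here I would use the derivative relation to simplify the values: whenever $\tau^{m-1} = (\tau+\alpha)^{m-1}$, write $\tau^m + (\tau+\alpha)^m = \tau \cdot \tau^{m-1} + (\tau+\alpha)(\tau+\alpha)^{m-1} = (\tau + (\tau+\alpha))\tau^{m-1} = \alpha\, \tau^{m-1}$, using characteristic $2$. So the value condition reads $\alpha\,\tau^{m-1} = \alpha\,\eta^{m-1}$, i.e. $\tau^{m-1} = \eta^{m-1}$. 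Substituting $\tau = \alpha/(1+\zeta_1)$ and $\eta = \alpha/(1+\zeta_2)$ gives $(1+\zeta_1)^{-(m-1)} = (1+\zeta_2)^{-(m-1)}$, i.e. $\left(\frac{1+\zeta_1}{1+\zeta_2}\right)^{m-1} = 1$. Conversely, every pair $(\zeta_1,\zeta_2)$ in $\overline{\mathbb F}_2\setminus\{1\}$ satisfying the three displayed equalities arises from a critical pair $(\tau,\eta)$ via $\tau=\alpha/(1+\zeta_1)$, $\eta=\alpha/(1+\zeta_2)$ by running the computation backwards. This establishes the equivalence, and since all three steps are reversible the proposition follows.

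The only mildly delicate point—what I expect to be the ``obstacle,'' though it is more of a bookkeeping matter than a genuine difficulty—is making sure the correspondence $\tau \leftrightarrow \zeta_1$ is a genuine bijection between critical points of $D_\alpha(x^m)$ and $(m-1)$-th roots of unity other than $1$, and that the two reformulations of ``$\tau\in\{\eta,\eta+\alpha\}$'' are exactly ``$\zeta_1\in\{\zeta_2,\zeta_2^{-1}\}$'' with no spurious or missing cases (e.g. checking that $\zeta_2 = 0$ cannot occur, which follows since $\eta \neq \alpha$). Once those edge cases are dispatched, the equivalence is immediate from the three rewrites above.
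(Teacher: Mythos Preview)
Your argument is correct. Like the paper, you invoke Lemma~\ref{lemma:moving_from_L_to_D} to pass from $L_\alpha(x^m)$ to $D_\alpha(x^m)$, and then make a change of variables to obtain the root-of-unity condition. The difference lies in the substitution: the paper (via Lemma~\ref{lemma:condition_xm}) sets $\zeta_1 = x_i/x_j$ and $\zeta_2 = (x_i+\alpha)/(x_j+\alpha)$, so each $\zeta$ mixes the two critical points, whereas you attach one $\zeta$ to each critical point via $\zeta_1 = (\tau+\alpha)/\tau$, $\zeta_2 = (\eta+\alpha)/\eta$. Your choice makes the bijection between critical points and nontrivial $(m-1)$-th roots of unity completely transparent, and your identity $\tau^m + (\tau+\alpha)^m = \alpha\,\tau^{m-1}$ (valid at critical points) yields the value condition in one line, where the paper performs several substitutions inside $(\diamond)$ to extract the three equalities. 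A small curiosity: with the paper's variables the case $\zeta_1=\zeta_2$ never actually arises (it would force $x_i=x_j$), so only the conclusion $\zeta_1\zeta_2=1$ is ``live''; with your variables both conclusions $\zeta_1=\zeta_2$ and $\zeta_1=\zeta_2^{-1}$ occur, corresponding exactly to $\tau=\eta$ and $\tau=\eta+\alpha$. Either route proves the proposition; yours is slightly more symmetric and self-contained, while the paper's packages the computation into a separate lemma that also records explicit formulae for $x_i,x_j$ in terms of $\zeta_1,\zeta_2$.
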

\begin{proof}
We use Lemma \ref{lemma:moving_from_L_to_D} to relate 
with the equations of   Lemma \ref{lemma:condition_xm}.
With the expressions of $x_i$ and $x_j$ obtained,
 we notice that $x_i=x_j+ \alpha$ if and only if $\zeta_1 \zeta_2 =1$.
\end{proof}

\begin{lemma}\label{lemma:condition_xm}
Let $m \geqslant 7$ 
be an odd integer
and $\alpha \in \mathbb{F}_q^{\ast}$.
Two distinct
elements
$x_i$ and $x_j$ 
in $\overline{\mathbb{F}}_2$ satisfy
$$ x_i^{m-1}=(x_i + \alpha)^{m-1} ,  x_j^{m-1}=(x_j+ \alpha)^{m-1}\textrm{ and }$$
$$ x_i^{m}+(x_i + \alpha)^{m} =
x_j^{m}+(x_j+ \alpha)^{m} \ \ \ (\diamond) $$
if and only if
$x_i=\frac{\zeta_1 (1+\zeta_2)}{\zeta_1 + \zeta_2} \alpha $ and
$x_j=\frac{(1+\zeta_2)}{\zeta_1 + \zeta_2} \alpha $
where $\zeta_1$ and $\zeta_2 $ are two distinct elements
in $\overline{\mathbb{F}}_2 \setminus \{ 1 \}$
such that
$\zeta_1^{m-1}  = \zeta_2^{m-1} =
\left( \frac{1+\zeta_1}{1+\zeta_2}\right)^{m-1}=1$.
\end{lemma}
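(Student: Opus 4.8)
The plan is to analyze the system $(\diamond)$ directly by a change of variables. For a nonzero $x\in\overline{\mathbb F}_2$, write $\zeta = (x+\alpha)/x = 1 + \alpha/x$, so that $x = \alpha/(1+\zeta)$ — note $\zeta\neq 1$ precisely because $x$ is finite and $\alpha\neq 0$, and $\zeta\neq 0$ unless $x=\alpha$, a degenerate case one checks separately. The first two equations $x_i^{m-1}=(x_i+\alpha)^{m-1}$ and $x_j^{m-1}=(x_j+\alpha)^{m-1}$ then translate into $\zeta_1^{m-1}=1$ and $\zeta_2^{m-1}=1$ respectively (dividing by $x_i^{m-1}$, resp. $x_j^{m-1}$), where $\zeta_1 = 1+\alpha/x_i$ and $\zeta_2 = 1+\alpha/x_j$. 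Since $m$ is odd, $m-1$ is even, so the $(m-1)$-st roots of unity are exactly the elements of $\overline{\mathbb F}_2^\ast$ of order dividing $m-1$; in particular $1$ is among them, which is why we must exclude $\zeta_1=\zeta_2=1$ (these correspond to $x_i=x_j=\infty$, not genuine solutions).

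Next I would handle the third equation. Write $x_i+\alpha = \zeta_1 x_i$ and $x_j + \alpha = \zeta_j x_j$. Then
$$
x_i^m + (x_i+\alpha)^m = x_i^m(1+\zeta_1^m) = x_i^m(1+\zeta_1)
$$
using $\zeta_1^{m-1}=1$, hence $\zeta_1^m = \zeta_1$; similarly $x_j^m+(x_j+\alpha)^m = x_j^m(1+\zeta_2)$. Substituting $x_i = \alpha/(1+\zeta_1)$ and $x_j = \alpha/(1+\zeta_2)$, the third equation of $(\diamond)$ becomes
$$
\frac{\alpha^m}{(1+\zeta_1)^m}(1+\zeta_1) = \frac{\alpha^m}{(1+\zeta_2)^m}(1+\zeta_2),
\quad\text{i.e.}\quad
(1+\zeta_1)^{1-m} = (1+\zeta_2)^{1-m},
$$
which after inverting is exactly $\left(\frac{1+\zeta_1}{1+\zeta_2}\right)^{m-1}=1$. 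So the three conditions on $(x_i,x_j)$ are equivalent to the three stated conditions $\zeta_1^{m-1}=\zeta_2^{m-1}=\left(\frac{1+\zeta_1}{1+\zeta_2}\right)^{m-1}=1$ on $(\zeta_1,\zeta_2)\in(\overline{\mathbb F}_2\setminus\{1\})^2$.

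Finally I would reconcile the parametrization $x_i = \alpha/(1+\zeta_1)$ with the one asserted in the statement, $x_i = \frac{\zeta_1(1+\zeta_2)}{\zeta_1+\zeta_2}\alpha$ and $x_j = \frac{1+\zeta_2}{\zeta_1+\zeta_2}\alpha$. These cannot literally agree for an unconstrained pair $(\zeta_1,\zeta_2)$; the point is that the lemma implicitly reparametrizes using the relations. I would set $\zeta_1' = (x_i+\alpha)/x_j$ and observe — using the normalization freedom, or rather substituting the claimed formulas and checking $(x_i+\alpha)/x_i$ and $(x_j+\alpha)/x_j$ land among the $(m-1)$-st roots of unity — that the two descriptions carve out the same set of solution pairs, with the stated formulas being the normalized representatives (note $(x_j+\alpha)/x_j = \zeta_2$ and $(x_i+\alpha)/x_i = \zeta_1$ are recovered by direct computation from $x_j = \frac{1+\zeta_2}{\zeta_1+\zeta_2}\alpha$ and $x_i = \frac{\zeta_1(1+\zeta_2)}{\zeta_1+\zeta_2}\alpha$). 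The two degenerate sub-cases — $x_i$ or $x_j$ equal to $0$ or $\alpha$, equivalently $\zeta_i\in\{0,\infty\}$ — must be excluded or checked not to produce spurious solutions; this, together with tracking the exact form of the rational reparametrization, is the only delicate bookkeeping. The main obstacle is not any hard mathematics but getting this dictionary between the two parametrizations exactly right, including the distinctness clause ($x_i\neq x_j \iff \zeta_1\neq\zeta_2$, which follows since $x\mapsto 1+\alpha/x$ is injective).
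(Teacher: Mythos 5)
Your change of variables $\eta_1=(x_i+\alpha)/x_i$, $\eta_2=(x_j+\alpha)/x_j$ does correctly convert the system $(\diamond)$ into the three root-of-unity conditions, and that part of the computation is sound. But the $\zeta_1,\zeta_2$ of the lemma are \emph{not} these quantities, and the dictionary you propose at the end is false: substituting $x_j=\frac{1+\zeta_2}{\zeta_1+\zeta_2}\alpha$ gives $x_j+\alpha=\frac{1+\zeta_1}{\zeta_1+\zeta_2}\alpha$, hence $(x_j+\alpha)/x_j=\frac{1+\zeta_1}{1+\zeta_2}$, not $\zeta_2$; likewise $(x_i+\alpha)/x_i=\frac{\zeta_2(1+\zeta_1)}{\zeta_1(1+\zeta_2)}$, not $\zeta_1$. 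The parametrization in the statement comes from the \emph{cross} ratios $\zeta_1=x_i/x_j$ and $\zeta_2=(x_i+\alpha)/(x_j+\alpha)$ --- this is exactly what the paper's proof uses: $\zeta_1^{m-1}=1$ and $\zeta_2^{m-1}=1$ follow by substituting the first two equations into $(\diamond)$ in the two possible ways, the identity $(1+\zeta_1)/(1+\zeta_2)=(x_j+\alpha)/x_j$ yields the third condition, and solving the linear system $x_i=\zeta_1 x_j$, $x_i+\alpha=\zeta_2(x_j+\alpha)$ produces the displayed formulas for $x_i$ and $x_j$. Since you explicitly identified the reconciliation of the two parametrizations as the crux of the argument, and the identity you rely on there fails, this is a genuine gap rather than a bookkeeping detail: as written, neither the forward direction (exhibiting $\zeta_1,\zeta_2$ for which the stated formulas hold) nor the converse (verifying $(\diamond)$ from the stated formulas) is established.

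The route can be repaired: the correct change of coordinates is $\zeta_1=(1+\eta_2)/(1+\eta_1)$ and $\zeta_2=\eta_1(1+\eta_2)/\bigl(\eta_2(1+\eta_1)\bigr)$, under which one checks that $(1+\zeta_1)/(1+\zeta_2)=\eta_2$, that the three conditions on $(\eta_1,\eta_2)$ transfer to the three conditions on $(\zeta_1,\zeta_2)$ (with all the distinctness requirements on $(\zeta_1,\zeta_2)$ collapsing to $\eta_1\neq\eta_2$), and that $\alpha/(1+\eta_1)=\frac{\zeta_1(1+\zeta_2)}{\zeta_1+\zeta_2}\alpha$. But at that point you have essentially reconstructed the paper's substitution, so working directly with $x_i/x_j$ and $(x_i+\alpha)/(x_j+\alpha)$ from the start is shorter.
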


\begin{proof}
Suppose that $x_i$ and $x_j$ satisfy the 
first set of conditions above.
We
 notice that 
they cannot be $0$  neither $\alpha$, 
so we can set $\zeta_1=x_i/x_j$ and $\zeta_2=(x_i+\alpha)/(x_j+\alpha)$.
As $x_i \neq x_j$ we have $\zeta_1 \neq \zeta_2$.
Replacing $(x_i+\alpha)^{m-1}$ by $x_i^{m-1}$ 
and $(x_j+\alpha)^{m-1}$ by $x_j^{m-1}$ in $(\diamond)$ 
we obtain $\zeta_1^{m-1}=1$.
Replacing $x_i^{m-1}$ by $(x_i+\alpha)^{m-1}$   
and $x_j^{m-1}$ by $(x_j+\alpha)^{m-1}$ in $(\diamond)$
 we obtain $\zeta_2^{m-1}=1$.
Replacing $x_i$ by $\zeta_1 x_j$ and $x_i+ \alpha$ by $\zeta_2 (x_j+ \alpha)$
 in the left hand side of 
$(\diamond)$, 
we obtain $(1+ \zeta_1)x_j^m=(1+ \zeta_2)(x_j+\alpha)^m$, so
$(1+\zeta_1)/(1+\zeta_2)=(x_j+\alpha)/x_j$, and $\left( (1+\zeta_1)/(1+\zeta_2) \right)^{m-1} = 1$.
To obtain the claimed expressions of $x_i$ and $x_j$, 
one can replace $x_j$ by 
$\zeta_1^{-1} x_i$ in the equality
$x_i+\alpha=\zeta_2(x_j+\alpha)$.
The converse follows from straightforward computations.
\end{proof}

\begin{example}\label{example_i}
It is straightforward to see that the integers $m=2^k+1$ for $k\geqslant 1$ belong to $\mathcal M$ since
$1$ is the only root of $x^{2^k}+1$.
\end{example}

\begin{remark}\label{critere_M}
As a consequence of Proposition \ref{proposition:condition_xm}
an odd integer $m$ belongs to $\mathcal{M}$
if and only if 
$2(m-1)+1$ does.
It implies that if an 
odd
integer $m$ belongs to $\mathcal{M}$ then 
for all $k\geqslant 0$ the integer $2^k(m-1) +1$ does. 
We also notice that
if an integer $m$ (not necessary odd)
satisfy the condition of Proposition \ref{proposition:condition_xm}
then $2(m-1)+1$ is an element of $\mathcal{M}$.
\end{remark}

\begin{example}\label{4_in_M}
As the polynomial
$x^3-1$ has exactly two roots $\zeta$ and $\zeta^{-1}$ different from the unity,
 we can deduce that $m=4$ satisfies
 the condition of Proposition \ref{proposition:condition_xm}.
Thus according to the above remark,  the integers $2^k3+1$ belong to $\mathcal M$ for $k\geqslant 1$.
\end{example}

\begin{example}\label{list_integers_m}
Proposition \ref{proposition:condition_xm} also provides us
with a method to check 
if an odd integer $m$ belongs  to $\mathcal M$.
For a fixed odd integer $m$, write $m-1=t2^{s}$ with $t$ odd. Hence the $(m-1)$-th roots of unity are exactly the $t$-th roots of unity in characteristic two. Consider the smallest integer $n$ such that $2^n\equiv 1\pmod t$ and compute the list  of the $t$-th roots of unity distinct from $1$ in  the field ${\mathbb F}_{2^n}$. Then  check for $\zeta_1$ and $\zeta_2$ in this list if 
$\left( \frac{1+\zeta_1}{1+\zeta_2}\right)^{t} = 1$
 imply $\zeta_1=\zeta_2$ or $\zeta_1=\zeta_2^{-1}$ using an exhaustive method.
For example
using the open source computer algebra system SAGE
we have determined that
the only odd integers
 less than 200 which do not belong to $\mathcal M$ are
15, 29, 31, 43, 57, 61, 63, 71, 85, 91, 99, 103, 113, 121, 125, 127, 141, 147, 151, 155, 169, 171, 179, 181, 183, 187 and 197.
\end{example}

We give below some infinite families of good exponents.

\begin{example}\label{example_unutilised}
Let us prove that for any $k \geqslant 0$ the integers
 $m=2^k+2$ 
 satisfy the conditions of Proposition \ref{proposition:condition_xm}.
First notice  that 
if $\zeta$  is a $(m-1)$-th root of  unity then
 $(1+\zeta)^{2^{k}+1}=\zeta+\zeta^{-1}$.
As a consequence,
if $\zeta_1$ and $\zeta_2$ are two $(m-1)$-th roots of unity such that 
$\left( \frac{1+\zeta_1}{1+\zeta_2}\right)^{m-1} = 1$
then 
$$\zeta_2 \left( (1+\zeta_1)^{2^{k}+1} + (1+\zeta_2)^{2^{k}+1} \right)
= \zeta_2^{2}+(\zeta_1+\zeta_1^{-1})\zeta_2+1.$$
But this is equal to zero, so 
 $\zeta_2$ is equal to $\zeta_1$ or $\zeta_1^{-1}$.
\end{example}

\begin{example}\label{example_ii}
Applying Remark \ref{critere_M} to the previous example 
we deduce that for any $k$ and $s$ satisfying  $k \geqslant s \geqslant 1$ 
the integer
 $2^k+2^{s}+1$ belongs to $\mathcal{M}$.
\end{example}

\begin{example}
In the case where $m=2^k-1$, with $k \geqslant 4$, 
we notice that for any choice of $\zeta_1$ a ($2^{k-1}-1$)-th root of  unity, 
we also have $(1+\zeta_1)^{2^{k-1}-1}=1$.
So any choice of a couple $(\zeta_1,\zeta_2)$ of ($2^{k-1}-1$)-th 
roots of  unity
such that $\zeta_1\neq\zeta_2$ and $\zeta_1 \zeta_2 \neq 1$
will satisfy the hypothesis 
$\zeta_1^{m-1}  = \zeta_2^{m-1} = 
\left( \frac{1+\zeta_1}{1+\zeta_2}\right)^{m-1} = 1
$
but will not satisfy the conclusion. 
In this case $L_{\alpha} (x^m)$ does not have distinct critical values
so $m \notin \mathcal{M}$.
\end{example}

The following result 
will be our main tool
to obtain infinite families of good exponents
with convenient congruence.
Indeed this result combined with the characterization of the set $\mathcal M$ given in Proposition \ref{proposition:condition_xm}
will provide us the  families of good exponents explicited in Proposition \ref{families_in_M} $(iii)$ and
exploited in Corollaries \ref{First_family} and \ref{Second_family}.

\begin{proposition}\label{Felipe_exponents}
Let $p,\ell$ be distinct primes such that 
$\ell \ne 2, p^{\ell-1} \not\equiv 1 \pmod{\ell^2}$
and that,
if $\z_1,\z_2\ne 1$ are $\ell$-th roots of unity in characteristic $p$ such that $(\z_1+1)/(\z_2+1)$ is also a
$\ell$-th root of unity, then $\z_1=\z_2$ or $\z_1 = \z_2^{-1}$.
Then, for any $k \ge 2$, 
if $\z_1,\z_2\ne 1$ are $\ell^k$-th roots of unity in characteristic $p$ such that $(\z_1+1)/(\z_2+1)$ is also a
$\ell^k$-th root of unity, then $\z_1=\z_2$ or $\z_1 = \z_2^{-1}$.
\end{proposition}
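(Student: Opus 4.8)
The plan is to argue by induction on $k$, reducing the statement at level $\ell^k$ to the statement at level $\ell^{k-1}$ (and ultimately to the hypothesis at level $\ell$), using the condition $p^{\ell-1} \not\equiv 1 \pmod{\ell^2}$ to control how $\ell$-th power roots of unity sit inside $\ell^k$-th power roots of unity. First I would recall that, since $p \neq \ell$, the group $\mu_{\ell^k}$ of $\ell^k$-th roots of unity in $\overline{\mathbb{F}}_p$ is cyclic of order $\ell^k$, and the hypothesis $p^{\ell-1} \not\equiv 1 \pmod{\ell^2}$ says precisely that $p$ is not an $\ell$-th power modulo $\ell^2$, equivalently that the multiplicative order of $p$ modulo $\ell^k$ grows by a factor of $\ell$ each time $k$ increases past a certain point; the key consequence I want to extract is that $\mathbb{F}_p(\mu_{\ell^k})$ has degree $\ell^{k-1}$ times the degree of $\mathbb{F}_p(\mu_\ell)$ over $\mathbb{F}_p$, so that the field generated by $\mu_\ell$ is strictly smaller than that generated by $\mu_{\ell^k}$, and $\mathrm{Gal}(\mathbb{F}_p(\mu_{\ell^k})/\mathbb{F}_p(\mu_\ell))$ acts on $\mu_{\ell^k}$ through the subgroup $1 + \ell\mathbb{Z}/\ell^k\mathbb{Z}$ of $(\mathbb{Z}/\ell^k\mathbb{Z})^\times$.

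Next, given $\zeta_1, \zeta_2 \neq 1$ in $\mu_{\ell^k}$ with $\eta := (\zeta_1+1)/(\zeta_2+1) \in \mu_{\ell^k}$, I would consider their images $\bar\zeta_1, \bar\zeta_2$ under the $\ell$-th power map into $\mu_{\ell^{k-1}}$. One has to be careful: the hypothesis is about the \emph{additive} expression $(\zeta_1+1)/(\zeta_2+1)$, which does not behave simply under $\ell$-th powering in characteristic $p \neq 2$ in general. The cleaner approach is to use the Galois action directly. Let $\sigma$ be a generator of $\mathrm{Gal}(\mathbb{F}_p(\mu_{\ell^k})/\mathbb{F}_p(\mu_{\ell}))$, so $\sigma$ raises every element of $\mu_{\ell^k}$ to the power $c$ for some $c \equiv 1 \pmod \ell$ that generates $1+\ell\mathbb{Z}/\ell^k\mathbb{Z}$. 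Applying $\sigma$ to the relation $\eta(\zeta_2+1) = \zeta_1+1$ gives $\eta^c(\zeta_2^c+1) = \zeta_1^c + 1$, and subtracting/combining with the original relation should let me derive a polynomial identity forcing $\zeta_1, \zeta_2$ either to lie in $\mu_\ell$ (so the hypothesis at level $\ell$ applies) or to satisfy $\zeta_1 = \zeta_2$ or $\zeta_1\zeta_2 = 1$ directly. Concretely, I expect that eliminating $\eta$ between the two equations produces something like $(\zeta_1+1)(\zeta_2^c+1) - (\zeta_1^c+1)(\zeta_2+1)$ being forced to a controlled vanishing locus, and that the arithmetic of $c \equiv 1 \pmod \ell$ acting on $\mu_{\ell^k}$ pins the possibilities down.

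An alternative and perhaps more robust route, which I would pursue if the Galois manipulation gets tangled, is the following counting/structural argument. Suppose for contradiction there exist $\zeta_1, \zeta_2 \in \mu_{\ell^k} \setminus \{1\}$ with $(\zeta_1+1)/(\zeta_2+1) \in \mu_{\ell^k}$ but $\zeta_1 \notin \{\zeta_2, \zeta_2^{-1}\}$. Choose such a pair with $k$ minimal; by the hypothesis, $k \geq 2$, and by minimality at least one of $\zeta_1, \zeta_2$ has exact order $\ell^k$. The map $\zeta \mapsto \zeta^\ell$ sends this configuration to a triple in $\mu_{\ell^{k-1}}$, but the additive expression is not respected, so instead I track the three quantities $\zeta_1, \zeta_2, \eta$ as generators of a subfield and use that $\mathbb{F}_p(\zeta_1,\zeta_2,\eta) \subseteq \mathbb{F}_p(\mu_{\ell^k})$ together with the degree jump from $\ell$ to $\ell^k$ guaranteed by $p^{\ell-1}\not\equiv 1\pmod{\ell^2}$ to get a contradiction with the Galois-equivariance of the relation $\eta(\zeta_2+1)=\zeta_1+1$ under the group $1+\ell\mathbb{Z}/\ell^k\mathbb{Z}$, which has no nontrivial fixed points on the elements of exact order $\ell^k$. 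The main obstacle, as I see it, is exactly this: showing that the \emph{additive} relation $\eta(\zeta_2+1) = \zeta_1+1$ cannot persist as we twist $\zeta_1, \zeta_2$ by the Galois group without collapsing to the trivial solutions $\zeta_1 = \zeta_2$ or $\zeta_1\zeta_2=1$ — i.e., ruling out ``sporadic'' solutions that only appear at the higher level $\ell^k$ and are not pullbacks of level-$\ell$ solutions. Getting the interaction between the multiplicative Galois action and the additive constraint right is the crux, and the hypothesis $p^{\ell-1}\not\equiv 1\pmod{\ell^2}$ is what prevents the Galois group from being too small to do this job.
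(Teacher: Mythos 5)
Your setup (induction on $k$, with the hypothesis $p^{\ell-1}\not\equiv 1\pmod{\ell^2}$ used to force $[\mathbb{F}_p(\zeta_1):\mathbb{F}_p]=\ell\,[\mathbb{F}_p(\zeta_1^{\ell}):\mathbb{F}_p]$ when $\zeta_1$ has exact order $\ell^k$) matches the paper's, but there is a genuine gap: you explicitly flag as ``the crux'' the problem of descending the additive relation $\zeta_3(\zeta_2+1)=\zeta_1+1$ to level $\ell^{k-1}$, and neither of your two proposed routes resolves it. The missing idea is to use the norm $\N$ of $\mathbb{F}_q=\mathbb{F}_p(\zeta_1)$ over $\mathbb{F}_r=\mathbb{F}_p(\zeta_1^{\ell})$ rather than the raw $\ell$-th power map or a single Galois twist. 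Because of the degree jump, the minimal polynomial of $\zeta_1$ over $\mathbb{F}_r$ is $x^{\ell}-\alpha_1$ with $\alpha_1=\zeta_1^{\ell}$, whence $\N\zeta_1=\alpha_1$, $\Tr\zeta_1=0$, and --- evaluating $x^{\ell}-\alpha_1$ at $x=-1$ --- $\N(1+\zeta_1)=1+\alpha_1$. So the norm \emph{does} respect the additive expression in exactly the needed way: applying $\N$ to $\zeta_3(\zeta_2+1)=\zeta_1+1$ (when $\zeta_2,\zeta_3$ also have exact order $\ell^k$) yields $\alpha_3(\alpha_2+1)=\alpha_1+1$ with $\alpha_i=\zeta_i^{\ell}$, and the induction hypothesis applies to the $\alpha_i$.

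Two further steps are then still needed and are absent from your sketch. First, knowing $\alpha_1=\alpha_2$ (resp.\ $\alpha_1=\alpha_2^{-1}$) only gives $\zeta_1=\omega\zeta_2$ (resp.\ $\zeta_1=\omega\zeta_2^{-1}$) for some $\omega$ with $\omega^{\ell}=1$; one must rule out $\omega\neq 1$, which the paper does by showing that the resulting identity $(1+\omega\zeta_2)/(1+\zeta_2)=\omega^{j}$ forces either $\omega=1$ or $\zeta_2\in\mathbb{F}_p(\omega)$, contradicting that $\zeta_2$ generates the strictly larger field $\mathbb{F}_q$. Second, the degenerate case where $\zeta_2$ has order smaller than $\ell^k$ while $\zeta_1$ has exact order $\ell^k$ must be treated separately; here the trace does the job, since $\Tr\zeta_1=\Tr\zeta_3=0$ while the trace of the left-hand side of $\zeta_1+1=\zeta_3(\zeta_2+1)$ is $\ell\neq 0$ in $\mathbb{F}_r$. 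Without the norm identity $\N(1+\zeta)=1+\zeta^{\ell}$ your induction has no mechanism to descend the hypothesis, so the argument as proposed does not close.
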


\begin{proof}
Induction on $k$. The case $k=1$ is the hypothesis.

Assume now that $\z_1$ have order exactly $\ell^k, k\ge 2$ and let $\mathbb{F}_q = \mathbb{F}_p(\z_1)$.
Because we assumed that $p^{\ell-1} \not\equiv 1 \pmod{\ell^2}$, we have that the order of $p \pmod{\ell^k}$ is $\ell$
times the order of $p \pmod{\ell^{k-1}}$. Let $\mathbb{F}_r = \mathbb{F}_p(\z_1^{\ell})$. It follows that
$[\mathbb{F}_q:\mathbb{F}_p] = \ell[\mathbb{F}_r:\mathbb{F}_p]$.
Then $q=r^\ell$ and the minimal polynomial of $\z_1$ over $\mathbb{F}_r$ is $x^\ell-\a_1$, where $\a_1 = \z_1^\ell$
has order $\ell^{k-1}$.
In particular $\N \z_1 = \a_1, \Tr \z_1 = 0$ and $\N(1+\z_1) = 1 +\a_1$ where $\N,\Tr$ are respectively the norm and
trace $\mathbb{F}_q/\mathbb{F}_r$, and the last equality follows by evaluating
$x^\ell-\a_1$ at $x=-1$.

Assume first that $\z_2$ have order exactly $\ell^k$ also and that $\z_3=(\z_1+1)/(\z_2+1)$ is also a
$\ell^k$-th root of unity and write $\z_i^{\ell} = \a_i, i=2,3$ so the $\a_i$ are $\ell^{k-1}$-th roots of unity.
As before, we get that $\N \z_i = \a_i, i=2,3$ and that $\N(1+\z_2) = 1 +\a_2$.
Taking norms, we get $\a_3=(\a_1+1)/(\a_2+1)$, so by induction we get that $\a_1=\a_2$ or $\a_1=\a_2^{-1}$.

If $\a_1=\a_2$, then $\a_3=1$ and either $\z_1=\z_2$ as we wanted or $\z_1 = \w\z_2$ with $\w$ of order $\ell$.
In the latter case we get $(1+\w\z_2)/(1+\z_2)=\w^j$ 
for some $j=0,1,\ldots,\ell-1$. If $j \ne 1$, we can solve the equation for
$\z_2$ and get $\z_2 \in \mathbb{F}_p(\omega)$ which is a contradiction. If $j=1$ we get $\w=1$, also a contradiction.

If $\a_1=\a_2^{-1}$, then $\a_3=\a_1$ and either $\z_1=\z_2^{-1}$ as we wanted or $\z_1 = \w\z_2^{-1}$ 
with $\w$ of order $\ell$. In the latter case we get $(1+\z_1)/(1+\w\z_1^{-1})=\w^j\z_1$ for some $j=0,1,\ldots,\ell-1$. 
This gives, for $j \ne 0$, 
$\z_1 \in \mathbb{F}_p(\omega)$ which is a contradiction. For $j=0$, this gives
$\omega = 1$, also a contradiction.

Finally, assume that $\z_2$ have order smaller than $\ell^k$, so $\z_2 \in \mathbb{F}_r$. We write our 
equation as $(\z_1+1)=\z_3(\z_2+1)$. First note that $\z_3$ cannot be in $\mathbb{F}_r$, since $\z_1$ is
not in $\mathbb{F}_r$, so $\Tr \z_1=\Tr \z_3 =0$, so taking trace of our equation gives $1 = 0(\z_2+1)=0$,
contradiction.

\end{proof}

\begin{example}\label{Felipe_computations}
We verified by computer calculation that the hypothesis of this proposition holds when $p=2$ and $\ell < 200$ except for
$\ell = 7,31,73,89,127$. 
For example the case $\ell = 3$ follows from Example \ref{4_in_M}. These computations will enable us to exhibit the examples of 
Corollaries \ref{First_family} and \ref{Second_family}.
\end{example}

%%%%%%%%%%%%%%%%%%

\section{Regular extensions}\label{Regular}
Let $n$ be an integer $\geqslant 1$ and set $q=2^n$.
Let $t$ be an element transcendental over  ${\mathbb F}_q$ and $K$ an extension field of ${\mathbb F}_q(t)$. Recall that the extension 
$K/{\mathbb F}_q(t)$ is said to be regular if it is separable and if ${\mathbb F}_q$ is algebraically closed in $K$ i.e. 
${\mathbb F}_q^{K}={\mathbb F}_q$ where  ${\mathbb F}_q^{K}$ is the algebraic closure of  $\mathbb F_q$ in $K$.

Let $\alpha\in{\mathbb F}_q^{\ast}$, let $m$ be an integer and $d=(m-1)/2$ if $m$ is odd and $d=(m-2)/2$ if $m$ is even.
Fix $f\in{\mathbb F}_q[x]$  a polynomial of degree $m$ such that
 the associated polynomial 
$L_{\alpha}f$ 
has degree exactly $d$.
Furthermore, we suppose that $d$ is odd which is equivalent to say that $m\equiv 0\pmod 4$ or $m\equiv 3\pmod 4$.

\subsection{First floor: monodromy}

We consider the arithmetic monodromy group $G$ of the polynomial $L_{\alpha}f$.
It is the Galois group of the extension $F/{\mathbb F}_q(t)$ where $F$ is the splitting field of the
polynomial $L_{\alpha}f(x)-t$ over the field ${\mathbb F}_q(t)$.
Consider also $\overline G:=\Gal(F/{\mathbb F}_q^{F}(t))$
the geometric monodromy group of $L_{\alpha}f$.
The groups $G$ and $\overline G$ are transitive subgroups of the symmetric group $\Sgoth_d$ and  $\overline{G}\lhd G$.

\begin{proposition}\label{Monodromy}
Let $f\in{\mathbb F}_q[x]$ be a polynomial such that
 the associated polynomial 
$L_{\alpha}f$ is Morse and
has (odd) degree $d$.

\begin{enumerate}[label=(\roman*)]
\item Let $u$ be a root of $L_{\alpha}f(x)-t$ in $F$. Then,
for each place $\wp$ of $F$ above the place $\infty$ at infinity
of ${\mathbb F}_q(t)$, we have
 that $u$ has a simple pole at $\wp$.
\item The group $\Gal(F/{\mathbb F}_q(t))$ is the full symmetric group $\Sgoth_d$ and the extension $F/{\mathbb F}_q(t)$ is regular.
\end{enumerate}
\end{proposition}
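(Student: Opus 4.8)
The plan is to apply the theory of Morse polynomials as developed in the appendix by Geyer to \cite{JardenRazon}, specialized to characteristic $2$. The essential input is that, for a Morse polynomial $g$ of degree $d$ over a field $K$ with $p \nmid d$, the Galois group $\Gal(g(x)-t, K(t))$ is the full symmetric group $\Sgoth_d$ (Proposition 4.2 in \cite{JardenRazon}), and moreover the ramification of the corresponding cover is mild enough that the extension is regular. Since $L_{\alpha}f$ is assumed Morse of odd degree $d$, all the hypotheses are met: condition (a) of Definition \ref{Definition_Morse} gives non-degenerate critical points, condition (b) gives distinct critical values, and condition (c) is automatic because $d$ is odd so $2 \nmid d$.

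For assertion (i), I would analyze the place at infinity directly. Since $L_{\alpha}f$ has degree exactly $d$, the equation $L_{\alpha}f(x)=t$ has, near $t=\infty$, all its roots going to infinity; writing $L_{\alpha}f = b_0 x^d + \cdots$, one sees that if $u$ is a root then $u^d \sim t/b_0$, so $u$ lies above $\infty$ and the place $\infty$ of ${\mathbb F}_q(t)$ is totally ramified in the degree-$d$ subextension ${\mathbb F}_q(u)$ with ramification index $d$; since $\gcd(d,p)=1$ this is tame, the place of ${\mathbb F}_q(u)$ above $\infty$ is unique and $u$ is a uniformizer there up to a $d$-th root, hence $u$ has a simple pole at each place $\wp$ of $F$ above $\infty$. (Equivalently: $1/u$ is a local parameter, so the pole order of $u$ at $\wp$ equals the ramification index of $\wp$ over the corresponding place of ${\mathbb F}_q(u)$, which is $1$ because $F/{\mathbb F}_q(u)$ is unramified above $\infty$ — the branch points of a Morse polynomial cover all lie in the finite plane, namely at the distinct critical values.)

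For assertion (ii), the identification $G = \Sgoth_d$ is exactly Geyer's result applied over $K = {\mathbb F}_q$: the Morse conditions force the inertia groups at the (finite) branch points to be generated by transpositions, and the only branch point at infinity contributes a $d$-cycle coming from the total tame ramification computed above; a transitive subgroup of $\Sgoth_d$ containing a $d$-cycle and a transposition is all of $\Sgoth_d$. Regularity then follows because $\overline{G} = \Gal(F/{\mathbb F}_q^F(t))$ is a normal subgroup of $G = \Sgoth_d$ which is still transitive — hence equal to $\Sgoth_d$ as well (for $d \ge 5$ the only normal subgroups of $\Sgoth_d$ are $1$, $\Agoth_d$, $\Sgoth_d$, and the first two are not transitive of the right size, $\Agoth_d$ being ruled out since $G/\overline G$ is cyclic generated by Frobenius while the $d$-cycle at infinity is an odd permutation lying in $\overline G$), so $\overline G = G$ and therefore ${\mathbb F}_q^F = {\mathbb F}_q$; separability is clear since $L_{\alpha}f(x)-t$ is separable in $x$ over ${\mathbb F}_q(t)$ (its $x$-derivative $(L_{\alpha}f)'(x)$ is nonzero as $d$ is odd).

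The main obstacle is assertion (i): one must be careful that the Morse conditions are precisely what guarantee the cover is unramified above $\infty$ except for the single tame totally-ramified place, so that no wild ramification at infinity spoils the simple-pole conclusion; this is where the hypothesis $p \nmid d$ (condition (c)), together with the fact that $L_{\alpha}f$ is a polynomial (so $\infty$ is a single point upstairs), does the work, and it should be extracted cleanly from the local analysis above rather than re-deriving the full monodromy computation.
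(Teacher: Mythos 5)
Your part (i) and your overall reliance on Geyer's appendix match the paper's proof: the paper likewise gets $e(\wp\mid\infty)=d$ from tameness (via the proof of Theorem 4.4.5 in Serre) and concludes $v_{\wp}(u)=-1$ from $v_{\wp}(L_{\alpha}f(u))=-d$, exactly as in your local analysis at infinity.

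The gap is in your part (ii), in the passage between the arithmetic and geometric monodromy groups. First, a transitive subgroup of $\Sgoth_d$ containing a $d$-cycle and a transposition need not be all of $\Sgoth_d$: for instance $\langle (0\,1\,2\,\ldots\,8),\,(0\,3)\rangle$ is a proper imprimitive transitive subgroup of $\Sgoth_9$, so odd $d$ does not save the claim; the generation statement actually underlying Geyer's theorem is that a transitive group generated by transpositions is symmetric. More seriously, your exclusion of $\overline G=\Agoth_d$ fails: since $d$ is odd, a $d$-cycle has sign $(-1)^{d-1}=+1$, i.e.\ it is an \emph{even} permutation, so its lying in $\overline G$ does not prevent $\overline G\subseteq\Agoth_d$; and $\Sgoth_d/\Agoth_d$ is cyclic, so cyclicity of $G/\overline G$ does not help either. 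As written, your argument leaves open $\overline G=\Agoth_d$, i.e.\ a quadratic constant-field extension, which is precisely the failure of regularity you must exclude. The repair --- and the paper's actual route --- is to observe that Geyer's theorem (like Serre's Theorem 4.4.5) is a statement about the \emph{geometric} monodromy group: it gives $\overline G=\Sgoth_d$ directly, and then $\Sgoth_d=\overline G\leqslant G\leqslant \Sgoth_d$ forces $G=\overline G=\Sgoth_d$ and ${\mathbb F}_q^{F}={\mathbb F}_q$. (Alternatively, one can fix your argument by noting that the transpositions generating inertia at the finite branch points are geometric inertia elements, hence lie in $\overline G$, and they are odd.)
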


\begin{proof}
If $v_{\wp}$ is the valuation at the place $\wp$,
we have
$v_{\wp}(L_{\alpha}f(u))=v_{\wp}(t)$
and by definition of the ramification index $e \left( \wp \vert \infty \right)$ we have
$v_{\wp}(t)=e \left( \wp \vert \infty \right)  v_{\infty} (t)=-e \left( \wp \vert \infty \right)$.
 Since $d$ is supposed to be odd, it is prime to the characteristic of ${\mathbb F}_q(t)$, and
 then, by the proof of Theorem 4.4.5  of \cite{Serre}, we have 
$e \left( \wp \vert \infty \right)=d$. Hence, we obtain $v_{\wp}(L_{\alpha}f(u))=-d$, which implies that
 $v_{\wp}(u)=-1$ and thus $u$ has a simple pole at $\wp$. 

The analogue of the Hilbert theorem given by Serre in Theorem 4.4.5 of \cite{Serre} and detailled in even characteristic in the appendix of Geyer in \cite{JardenRazon} gives that  the 
geometric monodromy group $\Gal(F/{\mathbb F}_{q}^F(t))$ of $L_{\alpha}f$
is the symmetric group
$\Sgoth_d$. But it is contained in the arithmetic monodromy group $\Gal(F/{\mathbb F}_q(t))$ which is also a subgroup of $\Sgoth_d$. So they are equal and ${\mathbb F}_{q}^F={\mathbb F}_q$.
\end{proof}

A consequence of the first part of the previous proposition is that 
$L_{\alpha}f(x)-t$ has only simple roots; let us call them $u_0,\ldots,u_{d-1}$.

 \subsection{Second floor}\label{section:SecondFloor}

Let $x_i$ such that 
$x_i^2+\alpha x_i=u_i$.
Hence we have $D_{\alpha}f(x_i)=t$.
Consider $\Omega={\mathbb F}_q(x_0,\ldots, x_{d-1})$  the compositum of the fields $F(x_i)$ and  ${\mathbb F}_q^{\Omega}F$  the compositum of 
$F$ and ${\mathbb F}_q^{\Omega}$.
Let  $\Gamma=\Gal(\Omega/ F)$ and $\overline\Gamma=\Gal(\Omega/{\mathbb F}_q^{\Omega}F)$.

The following statement appears in \cite{Felipe}.

\begin{lemma}\label{lemma_sum_u_i}
Suppose
that $L_{\alpha}f$ is Morse and has degree $d$.
If $J \subset \{ 0, \ldots, d-1 \}$ is neither empty nor the whole set then
$ \sum_{j \in J} u_j $  has a pole at a place of $F$ over the  place
$\infty$ of ${\mathbb F}_q(t)$.
\end{lemma}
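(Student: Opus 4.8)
The statement concerns sums $\sum_{j\in J}u_j$ of a proper nonempty subset of the roots of the Morse polynomial $L_{\alpha}f(x)-t$, and the claim is that such a sum cannot lie in the valuation ring at every place of $F$ above $\infty$. The plan is to use the local structure of the extension $F/\mathbb{F}_q(t)$ at $\infty$, which we understand completely from Proposition \ref{Monodromy}: there is a unique place $\wp$ of $F$ above $\infty$, it is totally ramified with ramification index $d$, and each root $u_i$ has a simple pole at $\wp$, i.e. $v_{\wp}(u_i)=-1$. I would work in the completion $\widehat{F}_{\wp}$, which (since $\mathbb{F}_q^F=\mathbb{F}_q$) is a Laurent series field $\mathbb{F}_q((\pi))$ in a uniformizer $\pi$ with $v_{\wp}(t)=-d$, so one may take $t=\pi^{-d}$ up to a unit.

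**Key steps.** First I would write the Puiseux-type expansions of the $u_i$ at $\wp$. Since $L_{\alpha}f$ has degree $d$ with some leading coefficient $b_0$, the roots of $L_{\alpha}f(x)=t$ near infinity are $u_i = \zeta^i \, b_0^{-1/d}\, \pi^{-1}(1+O(\pi))$ as $\zeta$ ranges over the $d$-th roots of unity, and the Galois group $\mathfrak{S}_d$ permutes these $d$ branches transitively — indeed the inertia group at $\wp$ is cyclic of order $d$ generated by the $d$-cycle $\zeta \mapsto \zeta\cdot\omega$ for a primitive $d$-th root of unity $\omega$ (here using that $d$ is odd, hence prime to $p=2$, so tame). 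Now for a subset $J$, the leading term of $\sum_{j\in J}u_j$ in its $\pi$-adic expansion is $\bigl(\sum_{j\in J}\zeta_j\bigr) b_0^{-1/d}\pi^{-1}$. If $\sum_{j\in J}\zeta_j\ne 0$ then $\sum_{j\in J}u_j$ has a genuine pole at $\wp$ and we are done. The remaining case is $\sum_{j\in J}\zeta_j=0$; here I would argue that because $J$ is neither empty nor all of $\{0,\dots,d-1\}$, the subset $\{\zeta_j : j\in J\}$ is not stable under multiplication by $\omega$, so some conjugate $\sigma\in\mathfrak{S}_d$ of the inertia generator moves $\sum_{j\in J}u_j$ to a sum over a different index set $J'$; iterating and using that the average of all $d$ sums $\sum_{i}u_i = $ (a symmetric function, namely $-b_1/b_0$, which has $v_{\wp}\ge 0$) already accounts for the $J=$ all case, one concludes $\sum_{j\in J}u_j$ cannot be regular at $\wp$ unless $J$ is trivial. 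Concretely, a cleaner route: suppose for contradiction $v_{\wp}(\sum_{j\in J}u_j)\ge 0$. Apply the $d$-cycle generator $\sigma$ of the (unique, cyclic, order-$d$) inertia/decomposition group: $\sigma$ fixes $\wp$, hence $v_{\wp}(\sigma(\sum_J u_j))\ge 0$ too, and $\sigma$ cyclically shifts the indices, so $\sum_{j\in J+1}u_j$ is also regular at $\wp$. Summing over all $d$ shifts, each $u_i$ appears exactly $|J|$ times, giving $|J|\sum_i u_i$ regular — no contradiction yet. Instead subtract two adjacent shifts: $\sum_{j\in J}u_j - \sum_{j\in J+1}u_j = \sum_{j\in J\setminus(J+1)} u_j - \sum_{j\in (J+1)\setminus J}u_j$ has a pole of order $1$ unless the leading coefficients cancel, i.e. unless $|J\setminus(J+1)|$ and $|(J+1)\setminus J|$ contribute matching leading $\zeta$-sums; pushing this through the full orbit forces $J$ to be $\omega$-stable, hence $J=\emptyset$ or $J=\{0,\dots,d-1\}$.

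**Main obstacle.** The delicate point is the case $\sum_{j\in J}\zeta_j=0$: one must rule out the possibility that all the leading ($\pi^{-1}$) coefficients of the shifted partial sums vanish simultaneously while $J$ is still a proper nonempty subset. The clean way to finish is the observation that the decomposition group at $\wp$ equals the whole inertia group and is the cyclic group of order $d$ acting as a single $d$-cycle on the branches; a $d$-cycle acting on a proper nonempty subset of $\{0,\dots,d-1\}$ cannot fix it setwise, so among the $d$ cyclic shifts of $J$ there are at least two distinct ones, and comparing the $\pi$-adic leading terms of the corresponding regular elements yields the pole. I expect this combinatorial-valuation bookkeeping — rather than any deep algebraic input — to be the crux, the rest being a direct appeal to Proposition \ref{Monodromy} and the tameness of the ramification at infinity.
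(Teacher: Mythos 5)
Your local analysis at a single place $\wp$ above $\infty$ cannot prove the lemma as stated, and the gap sits exactly at the point you flag as the ``delicate point''. First, a factual error in the setup: $F$ is the splitting field, of degree $d!$ over ${\mathbb F}_q(t)$, and the inertia group at a place above $\infty$ is cyclic of order $d$ (a $d$-cycle on the branches), so there are several places above $\infty$, not a unique totally ramified one. This matters because the lemma only claims a pole at \emph{some} place above $\infty$, and $\sum_{j\in J}u_j$ can genuinely be regular at your chosen $\wp$: with your expansions $u_j=\zeta_j b_0^{-1/d}\pi^{-1}(1+O(\pi))$, regularity at $\wp$ is precisely the vanishing of $\sum_{j\in J}\zeta_j$, and this does occur for proper nonempty $J$ (take $d=7$ and $\zeta$ with $\zeta^3+\zeta+1=0$, so $\zeta^0+\zeta^1+\zeta^3=0$). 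In that case your proposed rescue fails: the inertia generator multiplies every $\zeta_j$ by a common primitive $d$-th root of unity $\omega$, so each cyclic shift of $J$ has leading coefficient $\omega^k\sum_{j\in J}\zeta_j=0$ as well; the difference of two adjacent shifts therefore also has vanishing leading coefficient, and ``comparing $\pi$-adic leading terms'' yields $0=0$ rather than a pole. The claim that this ``forces $J$ to be $\omega$-stable'' is unsubstantiated and false in the example above. To finish along your lines you must leave the decomposition group at $\wp$, i.e.\ apply some $\sigma\in\Sgoth_d$ not fixing $\wp$ --- for instance a transposition exchanging some $j_0\in J$ with some $j_1\notin J$, which changes the leading coefficient by $\zeta_{j_0}+\zeta_{j_1}\neq 0$ and hence produces a pole at the place $\sigma(\wp)$.

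For comparison, the paper's proof avoids the local expansion entirely. Since the $u_j$ are integral over ${\mathbb F}_q[t]$ up to the constant $b_0$, having no pole above $\infty$ means having no pole at all, hence (using the regularity of $F/{\mathbb F}_q(t)$ from Proposition \ref{Monodromy}) $\sum_{j\in J}u_j$ would be a constant; applying the transposition $(j_0\,j_1)\in\Sgoth_d=\Gal(F/{\mathbb F}_q(t))$ with $j_0\in J$ and $j_1\notin J$ to this constant gives $u_{j_0}=u_{j_1}$, contradicting the simplicity of the roots. The global input --- the full symmetric Galois group, not just the decomposition group at one place --- is exactly what your argument is missing.
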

\begin{proof}
To obtain a contradiction suppose that 
$J \subset \{ 0, \ldots, d-1 \}$ is such that
$j_0 \in J$ whereas $j_1 \in \{ 0, \ldots, d-1\} \setminus J$.
Suppose also that 
$ \sum_{j \in J} u_j $ has no pole in places above $\infty$.
Then it has no pole at all, and so it is constant.
Recall that 
$\Gal \left( F/ {\mathbb F}_q(t) \right)$
is  $\Sgoth_d$ 
by Proposition \ref{Monodromy}.
Applying to $ \sum_{j \in J} u_j $ 
the automorphism corresponding to the transposition 
$(j_0j_1) \in  \Sgoth_d$ 
 one obtains 
 $\sum_{j \in J \setminus\{ j_0\}} u_j + u_{j_0}=\sum_{j \in J \setminus\{ j_0\}} u_j + u_{j_1}$,
 which leads to $u_{j_0}=u_{j_1}$, a contradiction.
\end{proof}

\begin{lemma}\label{lemma_sum_x_i}
Suppose
that $L_{\alpha}f$ is Morse and has degree $d$.
Let $\widetilde F$ be $F$ or ${\mathbb F}_q^{\Omega}F$.
Let $J$ be a non-empty subset of $\{0, \ldots, d-1 \}$
different from $\{0, \ldots, d-1 \}$. Then
$$\sum_{j \in J} x_j \notin \widetilde F.$$
\end{lemma}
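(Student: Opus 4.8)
The plan is to argue by contradiction, mimicking the proof of Lemma~\ref{lemma_sum_u_i} but one level up, and using the pole information supplied by that lemma together with the fact that $[\Omega:F]$ is governed by elementary abelian $2$-extensions (Artin--Schreier theory). Suppose $\sum_{j\in J} x_j \in \widetilde F$ for some $J$ with $\emptyset \neq J \neq \{0,\dots,d-1\}$. Since each $x_j$ satisfies $x_j^2+\alpha x_j = u_j$, squaring and using $\alpha \in {\mathbb F}_q$ gives
$$\Big(\sum_{j\in J} x_j\Big)^2 + \alpha \sum_{j\in J} x_j = \sum_{j\in J} u_j.$$
Hence $\sum_{j\in J} u_j$ lies in $\widetilde F$ already — which it does — but more importantly, if $\sum_{j\in J}x_j \in \widetilde F$ then the Artin--Schreier element $\sum_{j\in J} u_j/\alpha^2$ would be of the form $z^2+z$ with $z = (\sum_{j\in J} x_j)/\alpha \in \widetilde F$, so $\sum_{j\in J} u_j$ would be "an Artin--Schreier trivial" element of $\widetilde F$. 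The goal is to contradict this via valuations.

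First I would pick a place $\wp$ of $F$ lying over the place $\infty$ of ${\mathbb F}_q(t)$; by Lemma~\ref{lemma_sum_u_i} the element $y:=\sum_{j\in J} u_j$ has a pole at $\wp$, and by Proposition~\ref{Monodromy}(i) each $u_j$ has a simple pole at $\wp$ (after possibly enlarging to the relevant place — note the geometric monodromy is $\Sgoth_d$ so all $u_j$ are conjugate and each has a simple pole at each place above $\infty$). I want to control $v_{\wp}(y)$: a priori $v_\wp(y) \geq -1$, and I would argue $v_\wp(y) = -1$ exactly, because the residues (leading Laurent coefficients) of the distinct $u_j$ at $\wp$ cannot cancel in a nontrivial proper partial sum — this is the place-at-infinity analogue of the transposition argument, or can be seen directly from the local expansion of $L_\alpha f(u_j)=t$. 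Now extend $\wp$ to a place $\wp'$ of $\widetilde F$; since $\widetilde F/F$ is at most a constant field extension (if $\widetilde F = {\mathbb F}_q^\Omega F$) or $\widetilde F = F$, the extension is unramified, so $v_{\wp'}(y)$ is still odd (equal to $-e$ with $e$ odd, in fact $-1$ or $-\#{\mathbb F}_q^\Omega$-related but still odd since a constant extension has odd... wait, constant field extensions can have any degree). I would instead just observe $v_{\wp'}(y)$ is odd because the ramification index is $1$. But then $y/\alpha^2 = z^2+z$ with $z\in \widetilde F$ forces $v_{\wp'}(z^2+z)$ to be even whenever $v_{\wp'}(z)<0$ (it equals $2v_{\wp'}(z)$), and $\geq 0$ otherwise; in either case it is never a negative odd integer, contradicting $v_{\wp'}(y/\alpha^2) = v_{\wp'}(y)$ odd and negative.

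The step I expect to be the main obstacle is establishing that $v_\wp(y)$ is \emph{odd} at \emph{some} place over $\infty$, not merely that $y$ has \emph{a} pole somewhere over $\infty$ (which is what Lemma~\ref{lemma_sum_u_i} gives). The subtlety is that a priori the poles of the individual $u_j$ at a given $\wp$ might partially cancel, leaving $v_\wp(y)$ unknown — it could conceivably be, say, $0$ at that particular place while $y$ has a pole at a different place over $\infty$. To handle this cleanly I would use the $\Sgoth_d$-action: the places of $F$ over $\infty$ are permuted by $G=\Sgoth_d$, and over ${\mathbb F}_q^F(t)={\mathbb F}_q(t)$ (regularity, Proposition~\ref{Monodromy}) the decomposition/inertia structure at $\infty$ is that of a single totally ramified place of index $d$ upstairs in each ${\mathbb F}_q(t)(u_j)$; pushing up to $F$, I would show there is a place $\wp$ at which exactly one $u_j$ (say $u_{j_0}$ with $j_0\in J$) has a pole and the rest are regular — this is a standard consequence of the cycle structure of inertia at a Morse branch point, or can be extracted from the "simple pole" normalization. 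At such a $\wp$ one gets $v_\wp(y)=v_\wp(u_{j_0})=-1$ directly, sidestepping all cancellation issues, and then the Artin--Schreier valuation parity argument above finishes the proof. An alternative, if locating such a well-behaved place proves awkward, is to work at \emph{all} places over $\infty$ simultaneously and use that $\sum_{\wp\mid\infty} v_\wp(y)\cdot\deg\wp$ is determined, but the single-place argument is cleaner and I would pursue it first.
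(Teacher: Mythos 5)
Your proposal is correct and follows essentially the same route as the paper: argue by contradiction, use Lemma~\ref{lemma_sum_u_i} to find a place $\wp$ of $F$ over $\infty$ where $y=\sum_{j\in J}u_j$ has a pole, observe that the pole is simple, and derive a parity contradiction from the quadratic relation (the paper writes this last step as $v_\wp(A\cdot B)=2v_\wp(A)=-1$ with $A=\sum_{j\in J}x_j$ and $B=A+\alpha$, which is your Artin--Schreier computation in product form). One remark: the ``main obstacle'' you identify in your last paragraph is illusory, and your proposed workaround rests on a false premise. Proposition~\ref{Monodromy}(i) gives $v_\wp(u_j)=-1$ for \emph{every} $j$ and \emph{every} place $\wp$ over $\infty$, so at the particular place supplied by Lemma~\ref{lemma_sum_u_i} the ultrametric inequality gives $v_\wp(y)\geqslant -1$, and since $y$ has a pole there, $v_\wp(y)=-1$ --- exactly as in your second paragraph; no further analysis of cancellation is needed. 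In particular, a place at which exactly one $u_j$ has a pole and the others are regular does not exist here ($\infty$ is totally ramified in each ${\mathbb F}_q(t)(u_j)$, so all the $u_j$ have simple poles at every place of $F$ over $\infty$), so that branch of your plan would fail if pursued; fortunately it is unnecessary.
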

\begin{proof}
To obtain a contradiction,
suppose that $\sum_{j \in J} x_j \in \widetilde F$.
%%%% remplacer par la reference adequat a ce th. dans notre note
By Lemma \ref{lemma_sum_u_i} we know that there exists a place $\wp$ of $F$ above $\infty$ such that
 $\sum_{j \in J} u_j$  has a pole at $\wp$.
Moreover, this pole is simple as for all $j\in \{0, \ldots, d-1 \} $ 
the root $u_j$ has a simple pole by Proposition \ref{Monodromy}.
Now consider 
$A = \left( \sum_{j \in J} x_j  \right)$ and $B=  \left( \sum_{j \in J} x_j  + \alpha \right)$.
If $A$ (and thus $B$) belongs to $\widetilde F$,
one can consider the valuation of $A$ and $B$ at $\wp$.
As $A.B = \sum_{j \in J} u_j $ it follows that
either $A$ or $B$ has a pole.
Since $A$ and $B$ differ from a constant,
$A$ has a pole if and only if $B$ has a pole.
So both have a pole and  
the order of multiplicity 
is the same.
Then we obtain  
$2v_{\wp}(A)=-1$, a contradiction.
\end{proof}

\begin{lemma}\label{lemma_compositum_of_two_extensions}
Let $k(x_1)$ and $k(x_2)$ be two Artin-Schreier extensions
of a field $k$ of characteristic 2.
Suppose that 
 $x_i^2+\alpha x_i =w_i$ with $\alpha$ and $w_i$ in  $k^{\ast}$.
Then   $k(x_1)=k(x_2)$  if and only if $x_1 + x_2 \in k$.

Moreover if $x_1 + x_2 \notin k$ then $k(x_1,x_2)$ is a degree 4 extension of $k$ 
and the three fields lying between 
$k$ and $k(x_1,x_2)$ are those of the following diagram.
\begin{center}
\begin{tikzpicture}[node distance=2cm]
 \node (k)                  {$k$};
 \node (k1plus2) [above of=k]  {$k(x_1+x_2)$};
 \node (k12)  [above of=k1plus2]   {$k(x_1,x_2)$};
 \node (k2)  [right of=k1plus2]  {$k(x_2)$};
 \node (k1)  [left of=k1plus2]   {$k(x_1)$};
 \draw (k)   -- (k2);
 \draw (k)   -- (k1plus2);
 \draw (k)   -- (k1);
 \draw (k2)  -- (k12);
 \draw (k1)  -- (k12);
 \draw (k1plus2)  -- (k12);
\end{tikzpicture}
\end{center}
\end{lemma}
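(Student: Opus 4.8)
The plan is to prove the statement in two pieces: first the equivalence $k(x_1) = k(x_2) \iff x_1 + x_2 \in k$, and then, under the hypothesis $x_1 + x_2 \notin k$, the claim that $[k(x_1,x_2):k] = 4$ together with the enumeration of the intermediate fields.

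\textbf{Step 1: the equivalence.} Each $x_i$ generates a degree $2$ Artin-Schreier extension of $k$ (degree exactly $2$ since $x_i^2 + \alpha x_i = w_i \neq 0$, so $x_i \notin k$), with nontrivial automorphism $\sigma_i$ sending $x_i \mapsto x_i + \alpha$. Suppose $k(x_1) = k(x_2) =: L$. Then $\Gal(L/k)$ has order $2$, generated by a single element $\sigma$; applying $\sigma$ to $x_1 + x_2$ sends it to $(x_1 + \alpha) + (x_2 + \alpha) = x_1 + x_2$ (since adding $\alpha$ twice cancels in characteristic $2$), so $x_1 + x_2$ is $\sigma$-fixed, hence lies in $k$. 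Conversely, if $x_1 + x_2 = c \in k$, then $x_2 = x_1 + c \in k(x_1)$, so $k(x_2) \subseteq k(x_1)$; both have degree $2$ over $k$, so they are equal. I would also record here the computation $x_1 + x_2$ satisfies $(x_1+x_2)^2 + \alpha(x_1+x_2) = w_1 + w_2$, which shows $k(x_1+x_2)$ is itself Artin-Schreier over $k$ and is nontrivial precisely when $w_1 + w_2 \neq 0$ — but note that even if $w_1 + w_2 = 0$ we could still have $x_1 + x_2 \notin k$ only if $x_1 \neq x_2$; in the regime we care about, $x_1 + x_2 \notin k$ is the running assumption.

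\textbf{Step 2: degree $4$ and intermediate fields.} Assume now $x_1 + x_2 \notin k$. By Step 1 this forces $k(x_1) \neq k(x_2)$, so $x_2 \notin k(x_1)$, which means $[k(x_1,x_2):k(x_1)] = 2$ (the minimal polynomial of $x_2$ over $k(x_1)$ is still $X^2 + \alpha X + w_2$, irreducible there), hence $[k(x_1,x_2):k] = 4$. Since $k(x_1,x_2)/k$ is a compositum of two Galois (abelian, exponent $2$) extensions it is itself Galois with group $(\mathbb{Z}/2)^2$, generated by $\sigma_1$ (fixing $x_2$, sending $x_1 \mapsto x_1 + \alpha$) and $\sigma_2$ (fixing $x_1$, sending $x_2 \mapsto x_2 + \alpha$). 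A Klein four-group has exactly three subgroups of order $2$: $\langle\sigma_1\rangle$, $\langle\sigma_2\rangle$, $\langle\sigma_1\sigma_2\rangle$, whose fixed fields are respectively $k(x_2)$, $k(x_1)$, and the fixed field of $\sigma_1\sigma_2$. Finally $\sigma_1\sigma_2$ sends $x_1 + x_2 \mapsto (x_1+\alpha)+(x_2+\alpha) = x_1+x_2$, so $x_1 + x_2$ lies in the fixed field of $\sigma_1\sigma_2$; since $x_1+x_2 \notin k$ it generates a degree $2$ extension, which must therefore be exactly that fixed field $k(x_1+x_2)$. This gives the three intermediate fields and the diagram.

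\textbf{Main obstacle.} There is no serious obstacle here — the argument is the standard Galois-correspondence picture for a compositum of two quadratic extensions. The only points requiring a little care are: (i) checking that each $x_i \notin k$ so the extensions are genuinely quadratic (this uses $w_i \neq 0$ together with the Artin-Schreier shape, since $x_i^2 + \alpha x_i \in k$ with $x_i \in k$ would force $w_i$ to be of the form $a^2 + \alpha a$; but the hypothesis of the lemma is that $k(x_i)$ are Artin-Schreier extensions, so this is given), and (ii) confirming that $X^2 + \alpha X + w_2$ remains irreducible over $k(x_1)$ precisely because $x_1 + x_2 \notin k$ rules out $x_2 \in k(x_1)$ — here I would invoke Step 1 applied with $k(x_1)$ in place of $k$, or simply argue directly that a root of it in $k(x_1)$ would be $\sigma_1$-moved by $\alpha$ or $0$, forcing $x_2 \in \{$that root$\}$ and then $x_1 + x_2 \in k(x_1)$ of trace $0$... cleanest is just: if $x_2 \in k(x_1)$ then $k(x_2) \subseteq k(x_1)$, equality by degrees, contradicting Step 1. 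So the whole thing reduces to bookkeeping once Step 1 is in hand.
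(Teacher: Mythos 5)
Your proof is correct and follows essentially the same route as the paper: establish degree $4$, identify $\Gal(k(x_1,x_2)/k)$ as the Klein four-group, and read off the three intermediate fields via the Galois correspondence, with $k(x_1+x_2)$ accounting for the third one. The only difference is that you prove the first assertion directly (via the action of the nontrivial automorphism on $x_1+x_2$) where the paper cites Lemma 4.1 of \cite{YvesFabien}; note only that your parenthetical ``$w_i\neq 0$, so $x_i\notin k$'' is not by itself a valid inference, but you correctly observe later that the nontriviality of the extensions is part of the hypothesis, so nothing is missing.
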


\begin{proof}
For the first assertion, see the proof of Lemma 4.1 in \cite{YvesFabien}.
In the case where $x_1+x_2 \notin k$, 
we can use $[k(x_1)(x_2):k(x_1)]=2$ to prove 
$[k(x_1,x_2):k]=4$.
We
 deduce 
 that $\Gal(k(x_1,x_2)/k) = \left( {\mathbb Z} / 2{\mathbb Z} \right)^2$.
The field 
$k(x_1+x_2)$ is a subextension since
$x_1+x_2$ is a root of $x^2+\alpha x = w_1 +w_2$.
It remains to prove that $k(x_1+x_2)$ is different from
$k(x_1)$ (and $k(x_2)$). 
According to the first statement of the lemma,
it is sufficient to check 
that $x_1+(x_1 + x_2) \notin k$.
\end{proof}

\begin{proposition}\label{Tour_infernale}
Suppose
that $L_{\alpha}f$ is Morse and has degree $d$.
Let $\widetilde F$ be $F$ or ${\mathbb F}_q^{\Omega}F$.
Let $r$ be an integer such that $0\leqslant r\leqslant d-2$. Then 
\begin{enumerate}[label=(\roman*)]
\item the field $\widetilde F(x_0, \ldots,x_r)$ is an extension of order $2^{r+1}$ of $\widetilde F$,
\item the Galois group $\Gal \left( \widetilde F(x_0, \ldots,x_r) / \widetilde F \right)$ 
 is $\left( {\mathbb Z} / 2 {\mathbb Z}\right)^{r+1}$ and
\item there are $2^{r+1}-1$ quadratic extensions of $\widetilde F$ between
$\widetilde F$ and $\widetilde F(x_0, \ldots,x_r)$. 
Namely, these extensions are 
the extensions $\widetilde F \left( \sum_{j \in J} x_j \right) $
with non-empty $J \subset \{0, \ldots, r \}$.
\end{enumerate}
\end{proposition}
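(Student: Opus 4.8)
The plan is to prove all three assertions simultaneously by induction on $r$, using Lemma \ref{lemma_compositum_of_two_extensions} as the engine that lets us adjoin one root $x_{r+1}$ at a time. The base case $r=0$ is essentially Lemma \ref{lemma_compositum_of_two_extensions} applied with $k=\widetilde{F}$, $x_1=x_0$, $x_2=0$ — or more directly, we must only check that $x_0 \notin \widetilde{F}$, which is exactly the content of Lemma \ref{lemma_sum_x_i} for the singleton $J=\{0\}$ (note $d \geqslant 3$ since $m \geqslant 7$, so $\{0\}$ is a proper non-empty subset). This gives that $\widetilde{F}(x_0)/\widetilde{F}$ is quadratic with Galois group $\mathbb{Z}/2\mathbb{Z}$, establishing (i), (ii), (iii) for $r=0$.

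For the inductive step, suppose the statement holds for some $r$ with $0 \leqslant r \leqslant d-3$, and set $E = \widetilde{F}(x_0,\ldots,x_r)$, so $[E:\widetilde{F}] = 2^{r+1}$ with $\mathrm{Gal}(E/\widetilde{F}) \cong (\mathbb{Z}/2\mathbb{Z})^{r+1}$. First I would show $[E(x_{r+1}):E] = 2$, i.e. that $x_{r+1} \notin E$. If it were, then since $x_{r+1}$ satisfies the Artin-Schreier-type equation $x^2 + \alpha x = u_{r+1}$ and $\mathrm{Gal}(E/\widetilde{F})$ is abelian of exponent $2$, the element $x_{r+1}$ would generate a quadratic (or trivial) subextension of $E/\widetilde{F}$; by the inductive description (iii), every quadratic subextension of $E/\widetilde{F}$ is of the form $\widetilde{F}\!\left(\sum_{j\in J} x_j\right)$ for some non-empty $J \subseteq \{0,\ldots,r\}$. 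By Lemma \ref{lemma_compositum_of_two_extensions} applied to $k=\widetilde{F}$ with the two roots $x_{r+1}$ and $\sum_{j\in J} x_j$ (both roots of equations $x^2+\alpha x = (\text{something in }\widetilde{F})$, the latter because $\sum_{j\in J} x_j$ is a root of $x^2+\alpha x = \sum_{j\in J} u_j$), generating the same quadratic extension forces $x_{r+1} + \sum_{j\in J} x_j \in \widetilde{F}$, i.e. $\sum_{j \in J \cup \{r+1\}} x_j \in \widetilde{F}$. Since $J \cup \{r+1\}$ is non-empty and a proper subset of $\{0,\ldots,d-1\}$ (here we use $r+1 \leqslant d-2 < d-1$), this contradicts Lemma \ref{lemma_sum_x_i}. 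The case where $x_{r+1} \in \widetilde{F}$ is likewise excluded by Lemma \ref{lemma_sum_x_i} with $J = \{r+1\}$.

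Having $[E(x_{r+1}):E]=2$, assertion (i) for $r+1$ follows: $[\widetilde{F}(x_0,\ldots,x_{r+1}):\widetilde{F}] = 2^{r+2}$. For (ii), the field $\widetilde{F}(x_0,\ldots,x_{r+1})$ is generated over $\widetilde{F}$ by $r+2$ elements each of degree $\leqslant 2$ satisfying additive-type equations, so it is contained in the compositum of $r+2$ quadratic extensions; hence $\mathrm{Gal}(\widetilde{F}(x_0,\ldots,x_{r+1})/\widetilde{F})$ is abelian of exponent dividing $2$, and comparing orders ($2^{r+2}$) forces it to be $(\mathbb{Z}/2\mathbb{Z})^{r+2}$. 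For (iii), a group $(\mathbb{Z}/2\mathbb{Z})^{r+2}$ has exactly $2^{r+2}-1$ subgroups of index $2$, hence $\widetilde{F}(x_0,\ldots,x_{r+1})$ has exactly $2^{r+2}-1$ quadratic subextensions over $\widetilde{F}$; each $\widetilde{F}\!\left(\sum_{j\in J} x_j\right)$ for non-empty $J \subseteq \{0,\ldots,r+1\}$ is such a subextension (being generated by a root of $x^2+\alpha x = \sum_{j\in J} u_j$), and there are exactly $2^{r+2}-1$ such non-empty sets $J$, so we need only verify these extensions are pairwise distinct. But if $\widetilde{F}\!\left(\sum_{j\in J} x_j\right) = \widetilde{F}\!\left(\sum_{j\in J'} x_j\right)$ with $J \neq J'$, then by Lemma \ref{lemma_compositum_of_two_extensions} their sum $\sum_{j \in J \triangle J'} x_j \in \widetilde{F}$ (symmetric difference, taking coefficients mod $2$), and $J \triangle J'$ is a non-empty proper subset of $\{0,\ldots,d-1\}$, again contradicting Lemma \ref{lemma_sum_x_i}. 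This closes the induction.

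The main obstacle is the argument that $x_{r+1} \notin E$: one must correctly invoke that an abelian exponent-$2$ Galois group means any subextension generated by a single algebraic element of the right type is quadratic-or-trivial and appears in the list from (iii), and then translate "same quadratic subextension" into the additive relation via Lemma \ref{lemma_compositum_of_two_extensions}, so that Lemma \ref{lemma_sum_x_i} can deliver the contradiction. The bookkeeping on the index range — ensuring $J \cup \{r+1\}$ and $J \triangle J'$ stay proper subsets of $\{0,\ldots,d-1\}$, which is why the statement restricts to $r \leqslant d-2$ — also needs care, but is routine. Everything else is a matter of comparing orders of elementary abelian $2$-groups and counting subgroups of index $2$.
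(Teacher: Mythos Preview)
Your proof is correct and follows essentially the same approach as the paper: induction on $r$, using the inductive classification (iii) of quadratic subextensions together with Lemma~\ref{lemma_compositum_of_two_extensions} to convert ``same quadratic extension'' into an additive relation, and then Lemma~\ref{lemma_sum_x_i} for the contradiction. The paper organizes the inductive step via a diamond diagram (showing $x_0+x_r\notin\widetilde F(x_1,\ldots,x_{r-1})$ and applying Lemma~\ref{lemma_compositum_of_two_extensions} with $k=\widetilde F(x_1,\ldots,x_{r-1})$) and proves (ii) by exhibiting the $2^{r+1}$ automorphisms $x_i\mapsto x_i$ or $x_i+\alpha$ directly, whereas you adjoin $x_{r+1}$ straight to $\widetilde F(x_0,\ldots,x_r)$ and deduce (ii) from the compositum-of-quadratics structure; these are cosmetic differences, and the substance is the same.
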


\begin{proof}
We proceed by induction.
The case $r=0$ is trivial and the case $r=1$ is given 
by Lemma \ref{lemma_compositum_of_two_extensions}.
Assuming that the proposition holds for $r-1$, with $1\leqslant r\leqslant d-2$,
we will prove it for $r$.
The main idea is to consider the extensions of the following diagram
\begin{center}
\begin{tikzpicture}[node distance=2cm]
 \node (k)                  {$\widetilde F(x_1,\ldots,x_{r-1})$};
 \node (k1plus2) [above of=k]  {};
 \node (k12)  [above of=k1plus2]   {$\widetilde F(x_0,\ldots,x_{r})$};
 \node (k2)  [right of=k1plus2]  {$\widetilde F(x_1,\ldots,x_{r})$};
 \node (k1)  [left of=k1plus2]   {$\widetilde F(x_0,\ldots,x_{r-1})$};
 \draw (k)   -- (k2);
 \draw (k)   -- (k1);
 \draw (k2)  -- (k12);
 \draw (k1)  -- (k12);
\end{tikzpicture}
\end{center}
and to apply Lemma \ref{lemma_compositum_of_two_extensions}.
We first prove that $x_0+x_r \notin \widetilde F(x_1,\ldots,x_{r-1})$.
Otherwise we would have the quadratic extension
$\widetilde F(x_0+x_r)$
between $\widetilde F$ and $\widetilde F(x_1, \ldots, x_{r-1})$.
By the induction hypothesis,
there would exist $J \subset \{ 1 , \ldots , r-1\} $ such that
$\widetilde F(x_0+x_r)=\widetilde F \left( \sum_{j \in J} x_j \right)$.
By Lemma \ref{lemma_compositum_of_two_extensions} again
we would have $x_0+x_r+ \sum_{j \in J} x_j \in \widetilde F $ and then a contradiction
with Lemma \ref{lemma_sum_x_i}.
Then we can apply the conclusions of Lemma \ref{lemma_compositum_of_two_extensions}
with $k=\widetilde F(x_1,\ldots,x_{r-1})$
to obtain that $\widetilde F(x_0,\ldots,x_{r})$ is a quadratic extension of
both $\widetilde F(x_1,\ldots,x_{r})$ and $\widetilde F(x_0,\ldots,x_{r-1})$.
It follows that 
$[\widetilde F(x_0,\ldots,x_{r}):\widetilde F]=2^{r+1}$.

Furthermore, we can define $2^{r+1}$ different $\widetilde F$-automorphisms of $\widetilde F(x_0,\ldots,x_{r})$ by sending
$x_i$ to $x_i$ or to $x_i+\alpha$. 
So, all the elements of the Galois group $\Gal \left( \widetilde F(x_0, \ldots,x_r) / \widetilde F\right)$ have order dividing 2
thus this group is certainly $\left( {\mathbb Z} / 2{\mathbb Z} \right)^{r+1}$.

 For any non-empty subset $J \subset \{ 0, \ldots, r\}$
 we see that $\sum_{j \in J} x_j$ is a root
 of $x^2+\alpha x = \sum_{j \in J} u_j$, and we know
 from Lemma  \ref{lemma_sum_x_i} that $\sum_{j \in J} x_j \notin \widetilde F$.
We  obtain this way $2^{r+1}-1$ different quadratic extensions between $\widetilde F$ and 
 $\widetilde F(x_0, \ldots,x_r)$. 
 Indeed, we can show that these extensions are different. 
 If $\widetilde F \left( \sum_{j \in J_1} x_j \right) = \widetilde F \left( \sum_{j \in J_2} x_j \right)$
 then $ \sum_{j \in J_1} x_j  + \sum_{j \in J_2} x_j  \in \widetilde F$ which leads to $J_1=J_2$ 
 using Lemma \ref{lemma_sum_x_i}.
 Finally, these $2^{r+1}-1$ quadratic extensions are the only ones.
Indeed, the quadratic extensions between $\widetilde F$ and 
 $\widetilde F(x_0, \ldots,x_r)$ are in correspondence
 with the subgroups of $\left( {\mathbb Z} / 2{\mathbb Z} \right)^{r+1}$
 of index $2$.
 % this is of  order $2^n$ !!!
These subgroups are 
the hyperplanes of  $\left( {\mathbb Z} / 2 {\mathbb Z}\right)^{r+1}$
and one can count $2^{r+1}-1$ of them.
\end{proof}
 % Plus d'explication : les sous-groupes de $\left( {\mathbb Z} / 2 \right)^{n+1}$
 % sont en fait les Z/2Z-sous espaces vectoriels
 % ceux de cardinal $2^n$ sont les hyperplans, lesquels correspondent 
 % aux ŽlŽments non-nuls de $\left( {\mathbb Z} / 2 \right)^{n+1}$

\begin{proposition}\label{Galois_second_floor}
Suppose
that $L_{\alpha}f=\sum_{k=0}^db_{d-k}x^{k}$ is Morse and has degree $d$.
Let $\widetilde F$ be $F$ or $F{\mathbb F}_q^{\Omega}$.
If there exists $x \in {\mathbb F}_q$ such that 
$x^2+ \alpha x = b_1/b_0$ then 
$\Gal \left( \widetilde F(x_0, \ldots,x_{d-1}) / \widetilde F \right)$ 
 is $\left( {\mathbb Z} / 2{\mathbb Z} \right)^{d-1}$ and thus the extensions $\Omega/F$ and  $\Omega/{\mathbb F}_q(t)$ are regular.
\end{proposition}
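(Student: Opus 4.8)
The strategy is to show that $\widetilde F(x_0,\ldots,x_{d-1})$ equals $\widetilde F(x_0,\ldots,x_{d-2})$, so that Proposition~\ref{Tour_infernale} already gives the Galois group $\left({\mathbb Z}/2{\mathbb Z}\right)^{d-1}$ at level $r=d-2$; the point is that the last variable $x_{d-1}$ brings nothing new. First I would recall that $u_0+\cdots+u_{d-1}$ is, up to sign, the coefficient ratio $b_1/b_0$ of $L_\alpha f$ (it is the negative of the sum of the roots of $L_\alpha f(x)-t$, which is $-b_1/b_0$, hence equals $b_1/b_0$ in characteristic~$2$). Consequently $x_0+x_1+\cdots+x_{d-1}$ is a root of $x^2+\alpha x = b_1/b_0$. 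By hypothesis this equation has a solution in ${\mathbb F}_q\subseteq\widetilde F$, and its two roots differ by $\alpha\in\widetilde F$, so \emph{both} roots lie in $\widetilde F$; in particular $x_0+x_1+\cdots+x_{d-1}\in\widetilde F$.

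Next I would use this relation to eliminate $x_{d-1}$. Writing $x_{d-1} = (x_0+\cdots+x_{d-2}) + c$ with $c = x_0+\cdots+x_{d-1}\in\widetilde F$, we get $x_{d-1}\in\widetilde F(x_0,\ldots,x_{d-2})$, hence
$$\widetilde F(x_0,\ldots,x_{d-1}) = \widetilde F(x_0,\ldots,x_{d-2}).$$
Applying Proposition~\ref{Tour_infernale}(ii) with $r=d-2$ (which is legitimate since $0\leqslant d-2\leqslant d-2$) then yields that $\Gal\left(\widetilde F(x_0,\ldots,x_{d-1})/\widetilde F\right) = \left({\mathbb Z}/2{\mathbb Z}\right)^{d-1}$, for $\widetilde F$ equal to either $F$ or $F{\mathbb F}_q^{\Omega}$.

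Finally I would deduce regularity of $\Omega/F$ and of $\Omega/{\mathbb F}_q(t)$. Since $\Omega = {\mathbb F}_q(x_0,\ldots,x_{d-1})$ is exactly the field $\widetilde F(x_0,\ldots,x_{d-1})$ with $\widetilde F = F$, the computation above with $\widetilde F = F$ gives $[\Omega:F]=2^{d-1}$, while the computation with $\widetilde F = F{\mathbb F}_q^{\Omega}$ gives $[\Omega:F{\mathbb F}_q^{\Omega}]=2^{d-1}$ as well. Comparing these two indices forces $F{\mathbb F}_q^{\Omega}=F$, i.e. ${\mathbb F}_q^{\Omega}\subseteq F$; but ${\mathbb F}_q^{\Omega}$ is algebraic over ${\mathbb F}_q$ and ${\mathbb F}_q^{F}={\mathbb F}_q$ by Proposition~\ref{Monodromy}(ii) (as $L_\alpha f$ is Morse), so ${\mathbb F}_q^{\Omega}={\mathbb F}_q$. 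Separability of $\Omega/F$ is clear since each $x_i$ satisfies an Artin–Schreier equation $x^2+\alpha x = u_i$ with $\alpha\neq 0$. Hence $\Omega/F$ is regular, and composing with the regular extension $F/{\mathbb F}_q(t)$ of Proposition~\ref{Monodromy}(ii) shows $\Omega/{\mathbb F}_q(t)$ is regular as well. The only subtle point is the bookkeeping that makes the two index computations independent of which $\widetilde F$ is used — this is exactly what Proposition~\ref{Tour_infernale} was set up to provide, so there is no real obstacle beyond invoking it correctly with the relation $\sum x_j\in\widetilde F$ coming from $b_1/b_0$.
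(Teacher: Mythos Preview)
Your proposal is correct and follows essentially the same approach as the paper: both use $\sum_{i=0}^{d-1} u_i = b_1/b_0$ together with the hypothesis to conclude that $\sum_{i=0}^{d-1} x_i \in {\mathbb F}_q \subseteq \widetilde F$, hence $\widetilde F(x_0,\ldots,x_{d-1}) = \widetilde F(x_0,\ldots,x_{d-2})$, and then invoke Proposition~\ref{Tour_infernale} at level $r=d-2$. Your regularity argument (comparing $[\Omega:F]$ with $[\Omega:F{\mathbb F}_q^{\Omega}]$ to force $F{\mathbb F}_q^{\Omega}=F$) is just an explicit unpacking of the paper's one-line observation that $\Gamma=\overline\Gamma$.
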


\begin{proof}
As Proposition \ref{Tour_infernale} already gives
$\Gal \left( \widetilde F(x_0, \ldots,x_{d-2}) / \widetilde F \right)
= \left( {\mathbb Z} / 2 {\mathbb Z} \right)^{d-1}$,
it remains to study the extension
$ \widetilde F(x_0, \ldots,x_{d-1}) / 
\widetilde F(x_0, \ldots,x_{d-2}) $.

Using $\sum_{i=0}^{d-1} u_i = b_1 / b_0$ and 
the linearity of $x\mapsto x^2+ \alpha x$, we see that
in any case the equation $x^2+\alpha x = b_1 / b_0$
has two solutions in $\overline{\mathbb F}_q$, namely
 $\sum_{i=0}^{d-1} x_i$  and $\alpha + \sum_{i=0}^{d-1} x_i$.
With our hypothesis we deduce that 
 $\sum_{i=0}^{d-1} x_i \in {\mathbb F}_q$ hence
$ \widetilde F(x_0, \ldots,x_{d-1}) =
\widetilde F(x_0, \ldots,x_{d-2}) $ and the result about the Galois group follows.
Thus we have proved that $\Gamma=\overline\Gamma$ and then $\Omega/F$ is regular.
 Proposition \ref{Monodromy} shows that the extension $F/{\mathbb F}_q(t)$ is regular, hence 
 we deduce the regularity of the extension $\Omega/{\mathbb F}_q(t)$.
\end{proof}

\section{Main results}\label{Main}

The main ingredient of the proof of our main results is the 
Chebotarev density theorem. The next proposition summarizes its contribution
in our context.

\begin{proposition}\label{application_Chebotarev}
Let $m \geqslant 7$ be an integer such that $m\equiv 3\pmod 4$.
Then 
there exists an integer $N$ depending only on  $m$
such that for all $n \geqslant N$, if we set $q=2^n$,
for all $f \in \mathbb{F}_q[x]$ of degree   $m$,
and for all  $\alpha$  in $\mathbb{F}_q^*$
such that
the extension $\Omega/{\mathbb F}_q(t)$ is regular,
there exists $\beta\in {\mathbb F}_q$ such that the polynomial 
$D_{\alpha}f(x)+\beta$ 
splits in $ \mathbb{F}_q[x]$ with no repeated factors. 

\end{proposition}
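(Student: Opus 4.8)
The plan is to realise $\Omega$ as the function field of a smooth projective geometrically connected curve $C$ over $\mathbb{F}_q$, to apply the effective Chebotarev density theorem to the Galois cover $C\to\mathbb{P}^1_t$, and to read off from a completely split, unramified rational place of $\mathbb{P}^1_t$ the value $\beta$ we want. The hypothesis that $\Omega/\mathbb{F}_q(t)$ is regular is precisely what guarantees that $C$ is geometrically connected, equivalently that the arithmetic and geometric monodromy groups of $\Omega/\mathbb{F}_q(t)$ coincide, so that the density theorem applies with its expected main term.

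First I would record the relevant facts. As $m$ is odd, $\deg D_\alpha f=m-1$ by Proposition~\ref{degree_of_g}, with nonzero leading coefficient $\lambda\in\mathbb{F}_q^\ast$ (indeed $\lambda=m\alpha a_0$). The $m-1$ roots of $D_\alpha f(X)-t$ in $\Omega$ being $x_0,x_0+\alpha,\dots,x_{d-1},x_{d-1}+\alpha$, we have the factorization
\[
D_\alpha f(X)-t=\lambda\prod_{i=0}^{d-1}(X-x_i)(X-x_i-\alpha)\qquad\text{in }\Omega[X].
\]
Write $G=\Gal(\Omega/\mathbb{F}_q(t))$. Since $\Omega$ is the splitting field of a polynomial of degree $m-1$ over $\mathbb{F}_q(t)$, $|G|=[\Omega:\mathbb{F}_q(t)]$ divides $(m-1)!$, a bound depending only on $m$. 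The genus $g_\Omega$ of $\Omega$ is likewise bounded in terms of $m$ alone, uniformly in $q$, $f$ and $\alpha$: indeed $\Omega$ is the compositum $\mathbb{F}_q(x_0)\cdots\mathbb{F}_q(x_{d-1})$, each $\mathbb{F}_q(x_i)$ being $\mathbb{F}_q$-isomorphic to the rational function field $\mathbb{F}_q(x)$ with $t=D_\alpha f(x)$ — hence of genus $0$ and of degree $m-1$ over $\mathbb{F}_q(t)$ — and the genus of such a compositum is bounded by the iterated Castelnuovo--Severi inequality in terms of the degrees and genera of the factors. Finally, the places of $\mathbb{F}_q(t)$ ramified in $\Omega$ lie below the critical values of $D_\alpha f$ or the place $t=\infty$, so there are at most $m-2$ of them.

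Now I would apply the effective Chebotarev density theorem for function fields to the Galois cover $C\to\mathbb{P}^1_t$ and the conjugacy class $\{1\}\subseteq G$: using that $\Omega/\mathbb{F}_q(t)$ is regular, there is a constant $c(m)$ depending only on $m$ (through $g_\Omega$, $|G|$, and the number of ramified places) such that
\[
\#\bigl\{\beta\in\mathbb{F}_q\ :\ (t-\beta)\text{ is unramified and splits completely in }\Omega\bigr\}\ \geq\ \frac{q}{|G|}-c(m)\sqrt q-c(m).
\]
Hence there is $N=N(m)$ such that for every $n\geq N$ the right-hand side is positive, producing such a $\beta\in\mathbb{F}_q$ (the ramified place $t=\infty$ is automatically excluded, so $\beta$ is genuinely finite). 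To conclude, pick a place $\mathfrak P$ of $\Omega$ above $(t-\beta)$; by complete splitting its residue field is $\mathbb{F}_q$. Each $x_i$ is regular at $\mathfrak P$, its poles lying only above $t=\infty$, hence has a residue $\bar x_i\in\mathbb{F}_q$, and reducing the displayed factorization modulo $\mathfrak P$ (the coefficients $\lambda,\alpha,\beta$ already lie in $\mathbb{F}_q$) gives $D_\alpha f(X)+\beta=\lambda\prod_{i=0}^{d-1}(X-\bar x_i)(X-\bar x_i-\alpha)$ in $\mathbb{F}_q[X]$, so $D_\alpha f(X)+\beta$ splits in $\mathbb{F}_q[X]$. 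It has no repeated factor: a repeated root would force $\beta$ to be a critical value of $D_\alpha f$, hence $(t-\beta)$ to ramify in the subextension $\mathbb{F}_q(x_0)\cong\mathbb{F}_q(x)$ of $\Omega$, and therefore in $\Omega$, contradicting the choice of $\beta$.

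The main obstacle is the uniformity: one must bound the degree and, above all, the genus of $\Omega$ purely in terms of $m$, independently of $q$, $f$ and $\alpha$, so that the error term in Chebotarev is absorbed by the main term for all sufficiently large $n$ simultaneously; the Castelnuovo--Severi bound on the compositum $\Omega=\mathbb{F}_q(x_0)\cdots\mathbb{F}_q(x_{d-1})$ is what delivers this. The passage from ``completely split and unramified'' to ``splits into distinct linear factors over $\mathbb{F}_q$'' is then routine, resting only on the inclusion $\mathbb{F}_q(x_0)\subseteq\Omega$ and the factorization above.
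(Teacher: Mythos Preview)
Your argument is correct and follows essentially the same route as the paper: apply the effective Chebotarev density theorem to the regular Galois extension $\Omega/\mathbb{F}_q(t)$ for the trivial conjugacy class, bound $[\Omega:\mathbb{F}_q(t)]$ and $g_\Omega$ uniformly in terms of $m$ alone, and deduce that completely split unramified rational places exist once $q$ is large enough. The only noteworthy technical difference is the genus bound: the paper invokes a Riemann--Hurwitz--type estimate (Lemma~14 of Pollack) giving $g_\Omega\leqslant\tfrac{1}{2}(\deg D_\alpha f-3)[\Omega:\mathbb{F}_q(t)]+1$, whereas you use an iterated Castelnuovo--Severi inequality on the compositum $\mathbb{F}_q(x_0)\cdots\mathbb{F}_q(x_{d-1})$ of genus-zero fields; both yield a bound depending only on $m$, and your version has the mild advantage of being self-contained. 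You are also more explicit than the paper in the closing step, spelling out why a completely split unramified place $(t-\beta)$ forces $D_\alpha f(X)+\beta$ to split into distinct linear factors over $\mathbb{F}_q$; the paper leaves this implicit.
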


\begin{proof}
As $m\equiv 3\pmod 4$, 
by Proposition \ref{degree_of_g}
the polynomial $L_{\alpha}f$ has  degree exactly $d=(m-1)/2$, which is odd by our hypothesis on $m$, and thus $F/{\mathbb F}_q(t)$ is separable. Since
the extension $\Omega/F$  is also separable we obtain that $\Omega/\mathbb{F}_q(t)$ is  separable and thus Galois.

Since the extension $\Omega/\mathbb{F}_q(t)$ is supposed to be regular,
by an application of the  Chebotarev theorem 
(see Theorem 1 in \cite{FouqueTibouchi}
which is deduced  from Proposition 4.6.8 in \cite{Rosen})
the number $N(S)$ of 
places $v$ of ${\mathbb F}_q(t)$
of degree 1 unramified in $\Omega$ and
such that the Artin symbol  $\left(\frac{\Omega/{\mathbb F}_q(t)}{v}\right)$ is equal to the conjugacy class of 
$\Gal(\Omega/{\mathbb F}_q(t))$ consisting of the identity element
satisfies
$$N(S)\geqslant \frac{q}{d_{\Omega}}-2 \Bigl((1+\frac{g_{\Omega}}{d_{\Omega}})q^{1/2}+q^{1/4}+1+\frac{g_{\Omega}}{d_{\Omega}}\Bigr)$$
where $d_{\Omega}:=[\Omega:\mathbb{F}_q(t)]$ and $g_{\Omega}$ is the genus of $\Omega$.

But we have seen that $G=\Gal(F/{\mathbb F}_q(t))$ is a subgroup of $\Sgoth_d$  
and $\Gamma=\Gal(\Omega/F)$ is a group of order bounded by $2^d$, thus we 
have
 $d_{\Omega} \leqslant d! 2^{d}$.
Moreover, one can obtain an upper bound on $g_{\Omega}$ depending only on $d$
using Lemma 14 of \cite{Pollack}  to get that:
$g_{\Omega}\leqslant \frac{1}{2}(\deg D_{\alpha}f-3)d_{\Omega}  +1$
i.e.
$$g_{\Omega}\leqslant (d!2^d)\times (d-3/2)+1.$$
Then if $q$ is sufficiently large we will have $N(S)\geqslant 1$ 
which concludes the proof.
\end{proof}

Since the methods of our proofs need  the degree $m$ of the polynomials to belong to the set $\mathcal M$
defined in Definition \ref{etlabete}, 
we sum up some infinite subsets of $\mathcal M$ we have pointed out in Subsection \ref{EnsembleM}.

\begin{proposition}\label{families_in_M}
The following integers $m$ belong to the set $\mathcal M$:
\begin{enumerate}[label=(\roman*)]
\item $m=2^k+1$ for $k\geqslant 1$.
\item  $m=2^k+2^{s}+1$ for $k \geqslant s \geqslant 1$.
\item $m=2^s\ell^k+1$ 
for  $k \geqslant 1$, $s\geqslant 1$ and for $\ell$ an odd prime 
such that $2^{\ell-1} \not\equiv 1 \pmod{\ell^2}$ and such that $m':=\ell+1$ 
satisfy the condition of Proposition \ref{proposition:condition_xm}.
\end{enumerate}
\end{proposition}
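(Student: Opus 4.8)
The plan is to dispatch the three families one at a time, each by reduction to facts already established in Subsection~\ref{EnsembleM}. Part~$(i)$ is Example~\ref{example_i}: for $m=2^k+1$ with $k\geqslant 1$ we have $x^{m-1}+1=(x+1)^{2^k}$, so $1$ is the only element of $\overline{\mathbb{F}}_2$ with $\zeta^{m-1}=1$; hence there is no pair $(\zeta_1,\zeta_2)$ in $(\overline{\mathbb{F}}_2\setminus\{1\})^2$ with $\zeta_1^{m-1}=\zeta_2^{m-1}=1$, the condition of Proposition~\ref{proposition:condition_xm} holds vacuously, and $m\in\mathcal{M}$.

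Part~$(ii)$ is Example~\ref{example_ii}. One recalls that Example~\ref{example_unutilised}, via the identity $(1+\zeta)^{2^k+1}=\zeta+\zeta^{-1}$ for $\zeta$ a $(2^k+1)$-th root of unity, shows that each integer $2^k+2$ satisfies the condition of Proposition~\ref{proposition:condition_xm}; the last assertion of Remark~\ref{critere_M} then places $2(2^k+2-1)+1=2^{k+1}+2+1$ in $\mathcal{M}$, and iterating the middle assertion of Remark~\ref{critere_M} yields $2^k+2^s+1\in\mathcal{M}$ for all $k\geqslant s\geqslant 1$.

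Part~$(iii)$ carries the real content, and I would argue in two stages. First, fix an odd prime $\ell$ with $2^{\ell-1}\not\equiv 1\pmod{\ell^2}$ and such that $m'=\ell+1$ satisfies the condition of Proposition~\ref{proposition:condition_xm}. Because $m'-1=\ell$, that condition reads: for $\ell$-th roots of unity $\zeta_1,\zeta_2\ne 1$ in $\overline{\mathbb{F}}_2$ with $(1+\zeta_1)/(1+\zeta_2)$ again an $\ell$-th root of unity, one has $\zeta_1=\zeta_2$ or $\zeta_1=\zeta_2^{-1}$ --- which is exactly the base-case (i.e.\ $k=1$) hypothesis of Proposition~\ref{Felipe_exponents} taken with $p=2$. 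Invoking Proposition~\ref{Felipe_exponents} (whose other hypothesis $2^{\ell-1}\not\equiv 1\pmod{\ell^2}$ we have assumed) gives, for every $k\geqslant 2$, the same implication with $\ell$ replaced by $\ell^k$; the case $k=1$ is the hypothesis. Translating back through Proposition~\ref{proposition:condition_xm} and using that $(\ell^k+1)-1=\ell^k$, we conclude that the integer $\ell^k+1$ satisfies the condition of Proposition~\ref{proposition:condition_xm} for every $k\geqslant 1$.

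Second, feed this into Remark~\ref{critere_M}. The integer $\ell^k+1$ is even, but it satisfies the condition of Proposition~\ref{proposition:condition_xm}, so the last assertion of that remark puts the odd integer $2(\ell^k+1-1)+1=2\ell^k+1$ in $\mathcal{M}$; then, since $(2\ell^k+1)-1=2\ell^k$, the middle assertion of Remark~\ref{critere_M} gives $2^j\cdot 2\ell^k+1=2^{j+1}\ell^k+1\in\mathcal{M}$ for all $j\geqslant 0$, that is, $2^s\ell^k+1\in\mathcal{M}$ for all $s\geqslant 1$, which is the claim (the implicit constraint $m\geqslant 7$ holds automatically, since $\ell\geqslant 3$ and $s,k\geqslant 1$ force $2^s\ell^k+1\geqslant 7$). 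The only point requiring care is the bookkeeping in $(iii)$: matching the condition of Proposition~\ref{proposition:condition_xm} for $\ell^k+1$ to the roots-of-unity statement of Proposition~\ref{Felipe_exponents}, and tracking the correct power of $2$ through Remark~\ref{critere_M}. All the substantive difficulty --- the induction on the $\ell$-adic valuation of the order of $\zeta_1$ --- has already been handled inside Proposition~\ref{Felipe_exponents}, so no further obstacle remains.
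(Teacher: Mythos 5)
Your proposal is correct and follows essentially the same route as the paper: parts $(i)$ and $(ii)$ are exactly Examples \ref{example_i} and \ref{example_ii}, and part $(iii)$ combines Proposition \ref{Felipe_exponents} (with $p=2$, base case supplied by the hypothesis on $\ell+1$) with Remark \ref{critere_M}, which is precisely the paper's argument. The extra bookkeeping you supply — matching the condition for $\ell^k+1$ to the roots-of-unity statement and tracking the powers of $2$ through the remark — is accurate and merely makes explicit what the paper leaves terse.
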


\begin{proof}
The  first two assertions are proved respectively in Example \ref{example_i} and \ref{example_ii}.
If $\ell$ satisfy the hypothesis $(iii)$ then Proposition \ref{Felipe_exponents} in the case of characteristic two  tells us that ${\ell}^k+1$ also satisfy the condition of Proposition \ref{proposition:condition_xm}. Now use Remark \ref{critere_M} to have that $2^s\ell^k+1$ satisfy the condition of Proposition \ref{proposition:condition_xm}. For $s\geqslant 1$ it is odd and so it belongs to $\mathcal M$.
\end{proof}

Now we can state and prove our main results
which establish for some polynomials
$f$ the maximality of the differential uniformity $\delta(f)$
defined in Section 1 by 
$\displaystyle{\delta(f)=\max_{(\alpha,\beta)\in{\mathbb F}_q^{\ast}\times{\mathbb F}_q}\sharp\{x\in{\mathbb F}_{q} \mid f(x+\alpha)+f(x)=\beta\}.}$

\begin{theorem}\label{Principal_7_mod_8}
Let  $m\in{\mathcal M}$ such that $m\equiv 7\pmod 8$.
Then for $n$ sufficiently large,
for all polynomials $f\in{\mathbb F}_{2^n}[x]$
of degree $m$ we have  $\delta(f)=m-1$.
\end{theorem}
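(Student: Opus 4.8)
The plan is to combine the three main technical ingredients assembled in the preceding sections: the Morse-ness of $L_\alpha f$ for almost every $\alpha$ (Theorem \ref{L_alpha_Morse}), the regularity of the tower $\Omega/\mathbb{F}_q(t)$ under a trace condition (Proposition \ref{Galois_second_floor}), and the Chebotarev-type count (Proposition \ref{application_Chebotarev}). Since $m\equiv 7\pmod 8$ we have in particular $m\equiv 3\pmod 4$, so $L_\alpha f$ has odd degree $d=(m-1)/2$ and all the machinery of Sections \ref{Morse} and \ref{Regular} applies; moreover $m\in\mathcal M$ means $L_1(x^m)$ has distinct critical values, which is exactly the hypothesis needed in Theorem \ref{L_alpha_Morse}.

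First I would fix $n$ (to be taken large later), set $q=2^n$, and let $f=\sum_{i=0}^m a_{m-i}x^i\in\mathbb{F}_q[x]$ have degree $m$. I need to produce a single $\alpha\in\mathbb{F}_q^\ast$ for which $L_\alpha f$ is Morse \emph{and} the equation $x^2+\alpha x=b_1/b_0$ is solvable in $\mathbb{F}_q$, i.e.\ $\Tr_{\mathbb{F}_q/\mathbb{F}_2}(b_1/(b_0\alpha^2))=0$. By Lemma \ref{lemma:b1overb0}, for $m\equiv 7\pmod 8$ we have $b_0=a_0\alpha$ and $b_1/b_0=(a_1\alpha+a_2)/a_0$, so the trace condition becomes $\Tr_{\mathbb{F}_q/\mathbb{F}_2}\bigl((a_1\alpha+a_2)/(a_0\alpha^2)\bigr)=0$, that is $\Tr_{\mathbb{F}_q/\mathbb{F}_2}(a_1/(a_0\alpha)+a_2/(a_0\alpha^2))=0$. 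Substituting $y=1/\alpha$, this is $\Tr_{\mathbb{F}_q/\mathbb{F}_2}\bigl((a_2/a_0)y^2+(a_1/a_0)y\bigr)=0$, which is an $\mathbb{F}_2$-linear condition on $y\in\mathbb{F}_q$ (using that $y\mapsto\Tr(y^2)=\Tr(y)$ is additive); hence it is satisfied by at least $q/2$ values of $y$, equivalently by at least $q/2$ values of $\alpha\in\mathbb{F}_q^\ast$ (the value $y=0$ is excluded but costs at most one element). On the other hand, by Theorem \ref{L_alpha_Morse} the number of $\alpha\in\mathbb{F}_q^\ast$ for which $L_\alpha f$ fails to be Morse is at most $\tfrac1{64}(m-3)(5m^2+28m+7)$, a constant depending only on $m$. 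Therefore, once $q/2$ exceeds $1+\tfrac1{64}(m-3)(5m^2+28m+7)$, i.e.\ once $n$ is large enough in terms of $m$ only, the two sets overlap and such an $\alpha$ exists.

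Next, having fixed such an $\alpha$: condition (1) (Morse) gives via Proposition \ref{Monodromy} that $F/\mathbb{F}_q(t)$ is regular with $\Gal(F/\mathbb{F}_q(t))=\Sgoth_d$; condition (2) (solvability of $x^2+\alpha x=b_1/b_0$ in $\mathbb{F}_q$) then feeds into Proposition \ref{Galois_second_floor} to give that $\Omega/F$, and hence $\Omega/\mathbb{F}_q(t)$, is regular. Now apply Proposition \ref{application_Chebotarev}: there is an integer $N=N(m)$ such that for $n\ge N$ there exists $\beta\in\mathbb{F}_q$ with $D_\alpha f(x)+\beta$ splitting completely in $\mathbb{F}_q[x]$ with no repeated factors. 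By Proposition \ref{degree_of_g} the polynomial $D_\alpha f$ has degree $m-1$ (as $m$ is odd), so $D_\alpha f(x)+\beta=0$ has exactly $m-1$ distinct solutions in $\mathbb{F}_q$; thus $\sharp\{x\in\mathbb{F}_q\mid f(x+\alpha)+f(x)=\beta\}=m-1$, whence $\delta(f)\ge m-1$. Since $\delta(f)\le\deg D_\alpha f=m-1$ always, we conclude $\delta(f)=m-1$. Taking $n$ larger than the maximum of the two thresholds (the one from the counting argument above and $N(m)$ from Proposition \ref{application_Chebotarev}) finishes the proof, with a bound depending only on $m$.

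The step I expect to require the most care is the very first one — guaranteeing that the ``Morse'' set of $\alpha$ and the ``trace-zero'' set of $\alpha$ genuinely intersect. The linearity of the trace condition after the substitution $y=1/\alpha$ is what makes this work: it forces the trace-zero set to have size $\approx q/2$, which is far more than the $O(m^3)$ exceptional $\alpha$ coming from non-Morse behaviour, so the pigeonhole is comfortable for $n$ large. One must just be slightly careful about the excluded values ($\alpha=0$, equivalently $y=0$, and the case $b_0=0$, which does not occur here since $b_0=a_0\alpha\ne0$ as $f$ has degree $m$ so $a_0\ne0$) and about the possibility that $a_1=a_2=0$, in which case the trace condition is vacuous and \emph{every} $\alpha$ works. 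None of this is an obstacle, merely bookkeeping.
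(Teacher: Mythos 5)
Your proposal is correct and follows essentially the same route as the paper's proof: produce a single $\alpha$ lying in both the Morse set (Theorem \ref{L_alpha_Morse}) and the trace-zero set from Lemma \ref{lemma:b1overb0}, then chain Propositions \ref{Monodromy}, \ref{Galois_second_floor} and \ref{application_Chebotarev}. The only cosmetic difference is in counting the trace-zero $\alpha$: you substitute $y=1/\alpha$ and use $\mathbb{F}_2$-linearity in $y$, while the paper rewrites the condition as $\Tr\bigl((a_1^2+a_0a_2)/(a_0^2\alpha^2)\bigr)=0$ and uses that $\alpha\mapsto c/\alpha^2$ permutes $\mathbb{F}_q^{\ast}$ — both yield the same $\approx 2^{n-1}$ count.
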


\begin{proof}
We fix $m\in{\mathcal M}$ such that $m\equiv 7\pmod 8$.
Let us prove that for $n$ sufficiently large and
 for any polynomial $\displaystyle{f=\sum_{i=0}^m a_{m-i} x^{i}}$ in ${\mathbb F}_{2^n}[x]$ of degree $m$, 
  there exists  $\alpha$ in ${\mathbb F}_{2^n}^{\ast}$ such that:
 \begin{itemize}[label=--]
 \item  $L_{\alpha}f$ is Morse
 
\item  the equation $x^2+ \alpha x = \frac{b_1}{b_0}$ has a solution in $\mathbb{F}_{2^n}$, where $L_{\alpha}f=\sum_{i=0}^{d} b_{d-i} x^i$.
\end{itemize}

 By Theorem \ref{L_alpha_Morse}, for all $f \in {\mathbb F}_{2^n}[x]$ of degree $m$, the number of elements $\alpha$ in 
${\mathbb F}_{2^n}^{\ast}$ such that $L_{\alpha}f$ is Morse is 
at least 
$2^n-\frac{1}{64}(m-3)(5m^2+28m+7)$.
 
 Moreover, by the Hilbert'90 Theorem, the equation $x^2+ \alpha x = \frac{b_1}{b_0}$
has a solution in $\mathbb{F}_{2^n}$ if and only if
$\textrm{Tr}_{{\mathbb F}_{2^n}/{\mathbb F}_2}  \left( \frac{b_1}{b_0\alpha^2} \right)=0$. By Lemma \ref{lemma:b1overb0} it is equivalent to
 $\textrm{Tr}_{{\mathbb F}_{2^n}/{\mathbb F}_2}   \left( \frac{a_1^2+a_0a_2}{a_0^2\alpha^2} \right)=0$.
In the case where $a_1^2+a_0a_2=0$ every choice of $\alpha\in{\mathbb F}_{2^n}^{\ast}$ is convenient.
 Otherwise the map sending $\alpha$ to $\frac{a_1^2+a_0a_2}{a_0^2\alpha^2}$ is a permutation of  
 ${\mathbb F}_{2^n}^{\ast}$  and then $2^{n-1}-1$ values of $\alpha$ are convenient.
 
 Hence as soon as $2^{n-1}>\frac{1}{64}(m-3)(5m^2+28m+7)+1$ we will have for any 
 $f \in {\mathbb F}_{2^n}[x]$ of degree $m$
  the existence of $\alpha$ in ${\mathbb F}_{2^n}^{\ast}$ satisfying the two conditions.
 Now, these conditions imply by Proposition \ref{Galois_second_floor} that the extension $\Omega/{\mathbb F}_{2^n}(t)$ is regular.
 
 Finally we can apply Proposition \ref{application_Chebotarev} to obtain, for $n$ sufficiently large depending only on $m$, the existence of $\beta\in {\mathbb F}_{2^n}$ such that the polynomial 
$D_{\alpha}f(x)+\beta$ 
splits in $ \mathbb{F}_{2^n}[x]$ with no repeated factors. 
Then $\delta(f)=m-1$.
 \end{proof}

To be concrete, 
using Proposition \ref{families_in_M}, the computations of Example \ref{Felipe_computations}
and taking into account the congruences of $m$
we present in the following corollary
some families
of infinitely many integers for which Theorems \ref{Principal_7_mod_8}  holds.

\begin{corollary}\label{First_family}
Let $\ell$ be a prime congruent to $3$ modulo $4$ such that $2^{\ell-1} \not\equiv 1 \pmod{\ell^2}$ and  $\ell+1$ 
satisfy the condition of Proposition \ref{proposition:condition_xm}
(for example, $\ell \in \{ 
3,11,19,23,43,47, 59,67,71,79,83,103, 107,131,139, 151,163, 167,179,191,\break 199 
\ldots \})$.
Set $m=2{\ell}^{2k+1}+1$ with $k\geqslant 0$.
Then for $n$ sufficiently large,
for all polynomials $f\in{\mathbb F}_{2^n}[x]$
of degree $m$ we have  $\delta(f)=m-1$.
\end{corollary}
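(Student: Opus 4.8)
The plan is to derive this corollary as a direct specialization of Theorem \ref{Principal_7_mod_8}. Recall that the theorem asserts that for any $m \in \mathcal{M}$ with $m \equiv 7 \pmod 8$, for $n$ sufficiently large every polynomial $f \in \mathbb{F}_{2^n}[x]$ of degree $m$ satisfies $\delta(f) = m-1$. So the only work is to check that the integers $m = 2\ell^{2k+1}+1$ described in the statement satisfy the two required properties: membership in $\mathcal{M}$, and the congruence $m \equiv 7 \pmod 8$.

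First I would handle membership in $\mathcal{M}$. The hypothesis on $\ell$ is precisely that $\ell$ is an odd prime with $2^{\ell-1} \not\equiv 1 \pmod{\ell^2}$ and that $m' = \ell+1$ satisfies the condition of Proposition \ref{proposition:condition_xm}. This is exactly the hypothesis $(iii)$ of Proposition \ref{families_in_M} (with $s=1$ and $k$ replaced by $2k+1$): that proposition then tells us that $m = 2 \cdot 1 \cdot \ell^{2k+1} + 1 = 2\ell^{2k+1}+1 \in \mathcal{M}$. The parenthetical list of admissible $\ell$ is justified by Example \ref{Felipe_computations}, which records that the hypothesis of Proposition \ref{Felipe_exponents} holds for $p=2$ and all primes $\ell < 200$ except $\ell \in \{7,31,73,89,127\}$, intersected with the condition $\ell \equiv 3 \pmod 4$; the case $\ell = 3$ is covered separately via Example \ref{4_in_M}.

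Next I would verify the congruence. Since $\ell \equiv 3 \pmod 4$, the odd power $\ell^{2k+1}$ is again $\equiv 3 \pmod 4$ (because $\ell^2 \equiv 1 \pmod 8$ forces $\ell^{2k+1} \equiv \ell \pmod 8$, and $\ell \equiv 3 \pmod 4$ means $\ell \equiv 3$ or $7 \pmod 8$). In either case $2\ell^{2k+1} \equiv 6$ or $14 \equiv 6 \pmod 8$, hence $m = 2\ell^{2k+1}+1 \equiv 7 \pmod 8$. (A short computation: if $\ell \equiv 3 \pmod 8$ then $\ell^{2k+1} \equiv 3 \pmod 8$ and $m \equiv 7$; if $\ell \equiv 7 \pmod 8$ then $\ell^{2k+1} \equiv 7 \pmod 8$ and $m \equiv 15 \equiv 7 \pmod 8$.) In both cases $m \equiv 7 \pmod 8$, as needed.

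With these two facts in hand, Theorem \ref{Principal_7_mod_8} applies directly to every $m$ of the stated form, yielding the conclusion. There is essentially no obstacle here beyond the bookkeeping of congruences and the correct invocation of the earlier results; the substantive content was already established in Theorem \ref{Principal_7_mod_8} and Proposition \ref{families_in_M}. The only point requiring a little care is the reduction modulo $8$ of $\ell^{2k+1}$, which hinges on $\ell^2 \equiv 1 \pmod 8$ for odd $\ell$.
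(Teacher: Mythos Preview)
Your proposal is correct and follows essentially the same approach as the paper: the corollary is deduced by checking that $m=2\ell^{2k+1}+1$ lies in $\mathcal{M}$ via Proposition \ref{families_in_M}(iii), verifying the congruence $m\equiv 7\pmod 8$ from $\ell\equiv 3\pmod 4$ and $\ell^2\equiv 1\pmod 8$, and then invoking Theorem \ref{Principal_7_mod_8}. The paper gives no separate proof beyond the sentence preceding the corollary, which points to exactly these ingredients.
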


When $m$ is congruent to $3$ modulo $8$,
 we also obtain some results but
we have conditions on the parity of $n$ or 
we have to remove some polynomials.

\begin{theorem}\label{Principal_3_mod_8}
Let  $m\in{\mathcal M}$ such that $m\geqslant 7$ and $m\equiv 3\pmod 8$.

\begin{enumerate}[label=(\roman*)]
\item For $n$ even and sufficiently large and for all polynomials $f\in{\mathbb F}_{2^n}[x]$
of degree $m$ we have $\delta(f)=m-1$.
\item For $n$ sufficiently large and for all polynomials $f=\sum_{i=0}^{m} a_{m-i} x^i$ 
in ${\mathbb F}_{2^n}[x]$ of degree $m$ such that $a_1^2+a_0a_2 \neq 0$, we have $\delta(f)=m-1$.
\end{enumerate}
\end{theorem}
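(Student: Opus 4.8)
The plan is to run the argument of Theorem \ref{Principal_7_mod_8} essentially unchanged, the single point of difference being the analysis of the Artin--Schreier condition, since for $m\equiv 3\pmod 8$ the expression for $b_1/b_0$ in Lemma \ref{lemma:b1overb0} carries an extra term $\alpha^2$. So fix $m\in\mathcal M$ with $m\geqslant 7$ and $m\equiv 3\pmod 8$, and let $f=\sum_{i=0}^m a_{m-i}x^i$ have degree $m$, so $a_0\neq 0$. As $m\equiv 3\pmod 4$, Proposition \ref{degree_of_g} gives $\deg L_{\alpha}f=d=(m-1)/2$ and $b_0=a_0\alpha\neq 0$ for every $\alpha\in\mathbb{F}_{2^n}^{\ast}$. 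Exactly as in the proof of Theorem \ref{Principal_7_mod_8}, it is enough to produce, for $n$ large, some $\alpha\in\mathbb{F}_{2^n}^{\ast}$ such that $L_{\alpha}f$ is Morse and the equation $x^2+\alpha x=b_1/b_0$ has a solution in $\mathbb{F}_{2^n}$: Proposition \ref{Galois_second_floor} then makes $\Omega/\mathbb{F}_{2^n}(t)$ regular, and Proposition \ref{application_Chebotarev} furnishes $\beta\in\mathbb{F}_{2^n}$ with $D_{\alpha}f(x)+\beta$ splitting with distinct roots, so that $\delta(f)=\deg D_{\alpha}f=m-1$.

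The first step is the trace computation. By Hilbert's Theorem 90, as in Theorem \ref{Principal_7_mod_8}, solvability of $x^2+\alpha x=b_1/b_0$ in $\mathbb{F}_{2^n}$ is equivalent to $\Tr_{\mathbb{F}_{2^n}/\mathbb{F}_2}(b_1/(b_0\alpha^2))=0$, and Lemma \ref{lemma:b1overb0} gives $b_1/(b_0\alpha^2)=1+a_1/(a_0\alpha)+a_2/(a_0\alpha^2)$. I would then use the identity $\Tr(y^2)=\Tr(y)$ valid in characteristic $2$, together with the bijectivity of the Frobenius on $\mathbb{F}_{2^n}$ (write $\sqrt{\,\cdot\,}$ for its inverse), to rewrite $\Tr(a_2/(a_0\alpha^2))=\Tr(\sqrt{a_2/a_0}/\alpha)$; this yields
\[
\Tr_{\mathbb{F}_{2^n}/\mathbb{F}_2}\!\left(\frac{b_1}{b_0\alpha^2}\right)=\Tr_{\mathbb{F}_{2^n}/\mathbb{F}_2}(1)+\Tr_{\mathbb{F}_{2^n}/\mathbb{F}_2}\!\left(\frac{c}{\alpha}\right),\qquad c:=\frac{a_1}{a_0}+\sqrt{\frac{a_2}{a_0}},
\]
and since $c^2=(a_1^2+a_0a_2)/a_0^2$ we have $c=0$ if and only if $a_1^2+a_0a_2=0$.

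The second step is the counting, which splits according to the two assertions. For $(i)$, assume $n$ even, so $\Tr_{\mathbb{F}_{2^n}/\mathbb{F}_2}(1)=0$. If $a_1^2+a_0a_2=0$ then $c=0$ and the Artin--Schreier condition holds for every $\alpha\in\mathbb{F}_{2^n}^{\ast}$; if $a_1^2+a_0a_2\neq 0$ then $c\neq 0$, the map $\alpha\mapsto c/\alpha$ permutes $\mathbb{F}_{2^n}^{\ast}$, and the condition holds for the $2^{n-1}-1$ values of $\alpha$ with $\Tr(c/\alpha)=0$. For $(ii)$, assume $a_1^2+a_0a_2\neq 0$ with no restriction on $n$: the condition becomes $\Tr(c/\alpha)=\Tr_{\mathbb{F}_{2^n}/\mathbb{F}_2}(1)$, and since $\alpha\mapsto c/\alpha$ is a permutation of $\mathbb{F}_{2^n}^{\ast}$ this is satisfied by at least $2^{n-1}-1$ values of $\alpha$ (exactly $2^{n-1}-1$ if $n$ is even, $2^{n-1}$ if $n$ is odd). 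In all cases at least $2^{n-1}-1$ values of $\alpha$ are admissible for the Artin--Schreier condition, while Theorem \ref{L_alpha_Morse} excludes at most $\frac{1}{64}(m-3)(5m^2+28m+7)$ values of $\alpha$ for the Morse property; hence once $2^{n-1}-1$ exceeds this bound, a single $\alpha$ works for both, and one concludes as in the first paragraph.

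I expect no serious obstacle here: the only new content is the bookkeeping in the trace computation. The point worth emphasising is that the extra summand $\alpha^2$ in $b_1/b_0$ produces the term $\Tr_{\mathbb{F}_{2^n}/\mathbb{F}_2}(1)$, which vanishes precisely when $n$ is even, and that the linear form $\alpha\mapsto\Tr(c/\alpha)$ degenerates exactly on the locus $a_1^2+a_0a_2=0$. These two phenomena together explain why one must either take $n$ even (assertion $(i)$) or discard the polynomials with $a_1^2+a_0a_2=0$ (assertion $(ii)$): in the remaining case ($n$ odd and $a_1^2+a_0a_2=0$) the trace equals $\Tr_{\mathbb{F}_{2^n}/\mathbb{F}_2}(1)=1$ for every $\alpha$, so no choice of $\alpha$ satisfies the Artin--Schreier condition and this method cannot reach those $f$.
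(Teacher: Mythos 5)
Your proposal is correct and follows the same route as the paper: the paper's own proof simply notes that the argument of Theorem \ref{Principal_7_mod_8} carries over, with the extra $\alpha^2$ in $b_1/b_0$ turning the trace condition into $\Tr_{\mathbb{F}_{2^n}/\mathbb{F}_2}\bigl(\tfrac{a_1^2+a_0a_2}{a_0^2\alpha^2}\bigr)=n \pmod 2$, failing only when $a_1^2+a_0a_2=0$ and $n$ is odd. Your rewriting via $c=a_1/a_0+\sqrt{a_2/a_0}$ is just the inverse-Frobenius form of the same identity, and the counting is as in the paper.
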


\begin{proof}
The proof is similar as the one of  Theorem \ref{Principal_7_mod_8}.
The main difference comes from the expression of $b_1/b_0$ when $m\equiv 3\pmod 8$.
According to Lemma \ref{lemma:b1overb0}, 
we have $\Tr_{{\mathbb F}_{2^n}/{\mathbb F}_2}\left(\frac{b_1}{b_0\alpha^2}\right)=0$ if and only if 
$\Tr_{{\mathbb F}_{2^n}/{\mathbb F}_2}\left(\frac{a_1^2+a_0a_2}{a_0^2\alpha^2}\right)=n$. 
The  arguments of the  above proof apply except when 
$a_1^2+a_0a_2=0$ and $n$ is odd.
\end{proof}

We remark that one could not expect better in the case where 
$m\equiv 3 \pmod 8$,
$a_1^2+a_0a_2 = 0$ and 
$n$ odd since  Theorem 2 (iii) of  \cite{Felipe} gives that 
 $\delta(f)<m-1$ in this case. 
 
Again using Proposition \ref{families_in_M} and the computations of Example \ref{Felipe_computations} we obtain the following corollary.

\begin{corollary}\label{Second_family}
Let $\ell$ be an odd prime such that  $2^{\ell-1} \not\equiv 1 \pmod{\ell^2}$ and
$\ell+1$ 
satisfy the condition of Proposition \ref{proposition:condition_xm}.
\begin{enumerate}[label=(\roman*)]
\item If $\ell\equiv 1 \pmod 8$ then Theorem \ref{Principal_3_mod_8} holds for the integers $m=2\ell^{k}+1$ with $k\geqslant 1$ (for example if $\ell \in \{ 17,41,97,113,137,193,\ldots\}$).
\item If $\ell\equiv 7 \pmod 8$ then Theorem \ref{Principal_3_mod_8} holds for the integers $m=2\ell^{2k+1}+1$ with $k\geqslant 0$ (for example if $\ell \in \{
 23,47,71,79,103,151,167,\break 191,199,\ldots \}$).
 \end{enumerate}
\end{corollary}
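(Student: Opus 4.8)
The plan is to reduce each of the two families to a single application of one of the main theorems of Section~\ref{Main}, so that the whole content of the corollary becomes the verification of three facts for the integers $m$ in question: that $m\geqslant 7$, that $m\in\mathcal M$, and that $m$ lies in the correct residue class modulo $8$. The hypotheses imposed on $\ell$ are exactly those of Proposition~\ref{families_in_M}$(iii)$ (namely that $\ell$ is an odd prime with $2^{\ell-1}\not\equiv 1\pmod{\ell^2}$ and that $\ell+1$ satisfies the condition of Proposition~\ref{proposition:condition_xm}), so membership in $\mathcal M$ will follow directly from that proposition, and only the congruence bookkeeping needs care.

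For the membership in $\mathcal M$ I would observe that in both parts $m$ has the shape $2^{s}\ell^{j}+1$ with $s=1$ and $j\geqslant 1$: in $(i)$ we have $j=k\geqslant 1$, and in $(ii)$ we have $j=2k+1\geqslant 1$. Since the primes $\ell$ admitted by the corollary are precisely those verified in Example~\ref{Felipe_computations} to satisfy the hypotheses of Proposition~\ref{Felipe_exponents} in characteristic two, Proposition~\ref{families_in_M}$(iii)$ applies verbatim with $s=1$ and gives $m\in\mathcal M$ in both cases. Together with $\ell\geqslant 17$ in $(i)$ and $\ell\geqslant 23$ in $(ii)$ this also yields $m\geqslant 7$ at once.

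The heart of the argument is then the reduction modulo $8$. For $(i)$, from $\ell\equiv 1\pmod 8$ we get $\ell\equiv 1\pmod 4$, hence $\ell^{k}\equiv 1\pmod 4$ for every $k$, so that $m=2\ell^{k}+1\equiv 3\pmod 8$; this is exactly the hypothesis of Theorem~\ref{Principal_3_mod_8}, which I would invoke to conclude. For $(ii)$, the point worth flagging is that $\ell\equiv 7\pmod 8$ forces $\ell\equiv 3\equiv -1\pmod 4$, and since the exponent $2k+1$ is odd we obtain $\ell^{2k+1}\equiv(-1)^{2k+1}\equiv 3\pmod 4$, whence $m=2\ell^{2k+1}+1\equiv 2\cdot 3+1\equiv 7\pmod 8$. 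Thus these $m$ satisfy $m\equiv 7\pmod 8$, so the directly applicable result is Theorem~\ref{Principal_7_mod_8} (equivalently, this family is already the one covered by Corollary~\ref{First_family}, since $\ell\equiv 7\pmod 8$ entails $\ell\equiv 3\pmod 4$). That theorem gives the unconditional equality $\delta(f)=m-1$ for all $f$ of degree $m$ and all large $n$, which is stronger than, and in particular implies, both conclusions $(i)$ and $(ii)$ of Theorem~\ref{Principal_3_mod_8}; so the assertion of part~$(ii)$ holds a fortiori.

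The step I expect to require the most attention is precisely this congruence bookkeeping in part~$(ii)$: the parity of the exponent of $\ell$ is what fixes the class of $m$ modulo $8$, and one must check carefully which main theorem it selects. I would note for completeness that if one wanted the conclusion to fall genuinely under Theorem~\ref{Principal_3_mod_8} (the congruence $m\equiv 3\pmod 8$) for primes $\ell\equiv 7\pmod 8$, the appropriate family would be the even-exponent integers $m=2\ell^{2k}+1$, for which $\ell^{2k}\equiv(-1)^{2k}\equiv 1\pmod 4$ gives $m\equiv 3\pmod 8$; the $\mathcal M$-membership argument via Proposition~\ref{families_in_M}$(iii)$ is identical. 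Beyond Proposition~\ref{families_in_M}, the two main theorems, and these elementary congruences, no further input is needed.
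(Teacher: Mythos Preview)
Your approach matches the paper's: the proof there is a one-line citation of Proposition~\ref{families_in_M} and Example~\ref{Felipe_computations}, and you correctly unwind this into membership in $\mathcal M$ via Proposition~\ref{families_in_M}(iii) together with the congruence check modulo~$8$.

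What you add beyond the paper is the careful observation about part~(ii). You are right that for $\ell\equiv 7\pmod 8$ and an \emph{odd} exponent one gets $\ell^{2k+1}\equiv -1\pmod 4$ and hence $m=2\ell^{2k+1}+1\equiv 7\pmod 8$, so the statement as printed actually falls under Theorem~\ref{Principal_7_mod_8} (and is already covered by Corollary~\ref{First_family}); your a~fortiori deduction of the conclusion of Theorem~\ref{Principal_3_mod_8} is valid. Your suggested fix---taking even exponents $m=2\ell^{2k}+1$ to land in the class $3\pmod 8$---is exactly what the paper's own introduction announces for this family, so you have correctly identified and repaired a typo in the exponent.
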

 
%%%%%%%%%%

Finally, we prove  Conjecture \ref{conjecture} when $m\equiv 7\pmod 8$.

\begin{theorem}\label{proof_conjecture}
For a given integer $m\in\mathcal M$ such that $m\equiv 7\pmod 8$, 
there exists
$\varepsilon_m >0$ such that for all sufficiently large $n$,
 if $f$ is a polynomial of degree $m$ over 
${\mathbb F}_{2^n}$, for at least 
$\varepsilon_m 2^{2n}$ values of $(\alpha, \beta) \in   {\mathbb F}^{\ast}_{2^n}\times{\mathbb F}_{2^n}$
we have
$ \sharp \{x\in{\mathbb F}_{q} \mid f(x+ \alpha) + f(x)=\beta\} = \delta(f)=m-1$. 
\end{theorem}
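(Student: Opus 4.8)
The plan is to rerun the proof of Theorem~\ref{Principal_7_mod_8} but replace each ``there exists'' by a counting estimate, being careful that every constant that appears depends only on $m$. Fix $m\in\mathcal M$ with $m\equiv 7\pmod 8$, put $d=(m-1)/2$ so that $\deg D_\alpha f=m-1=2d$ (Proposition~\ref{degree_of_g}), and let $f=\sum_{i=0}^m a_{m-i}x^i\in\mathbb F_{2^n}[x]$ have degree $m$. First I would bound the number of \emph{good} $\alpha$. By Theorem~\ref{L_alpha_Morse} the polynomial $L_\alpha f$ fails to be Morse for at most a number $C_m$ of $\alpha\in\mathbb F_{2^n}^\ast$, where $C_m$ is the explicit quantity appearing there and depends only on $m$. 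On the other hand, exactly as in the proof of Theorem~\ref{Principal_7_mod_8} (Hilbert~90 and Lemma~\ref{lemma:b1overb0}), the equation $x^2+\alpha x=b_1/b_0$ is solvable in $\mathbb F_{2^n}$ for all $\alpha\in\mathbb F_{2^n}^\ast$ when $a_1^2+a_0a_2=0$, and for exactly $2^{n-1}-1$ values of $\alpha$ otherwise, since $\alpha\mapsto (a_1^2+a_0a_2)/(a_0^2\alpha^2)$ permutes $\mathbb F_{2^n}^\ast$ and $\Tr_{\mathbb F_{2^n}/\mathbb F_2}$ is balanced. Hence the set $A\subseteq\mathbb F_{2^n}^\ast$ of $\alpha$ satisfying both conditions has $\#A\geqslant 2^{n-1}-1-C_m$, so $\#A\geqslant 2^{n-2}$ once $n\geqslant N_1(m)$.

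Next I would make the Chebotarev input quantitative. Fix $\alpha\in A$; then the hypotheses of Proposition~\ref{Galois_second_floor} hold, so $\Omega/\mathbb F_{2^n}(t)$ is a regular Galois extension. Running the effective Chebotarev estimate exactly as in the proof of Proposition~\ref{application_Chebotarev}, with $d_\Omega=[\Omega:\mathbb F_{2^n}(t)]$ and $g_\Omega$ the genus of $\Omega$, the number of degree-one places of $\mathbb F_{2^n}(t)$ unramified in $\Omega$ and splitting completely is at least
\[
\frac{2^n}{d_\Omega}-2\Bigl(\bigl(1+\tfrac{g_\Omega}{d_\Omega}\bigr)2^{n/2}+2^{n/4}+1+\tfrac{g_\Omega}{d_\Omega}\Bigr).
\]
Such a place is $t=\beta$ for some $\beta\in\mathbb F_{2^n}$ for which $L_\alpha f(x)-\beta$ splits with distinct roots and each $x^2+\alpha x=u_i(\beta)$ is solvable in $\mathbb F_{2^n}$; equivalently $D_\alpha f(x)+\beta$ splits in $\mathbb F_{2^n}[x]$ with $2d=m-1$ distinct roots, so $\sharp\{x\in\mathbb F_q\mid f(x+\alpha)+f(x)=\beta\}=m-1$. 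The key point is the uniformity: the bounds $d_\Omega\leqslant d!\,2^d$ and $g_\Omega\leqslant (d!\,2^d)(d-3/2)+1$ from the proof of Proposition~\ref{application_Chebotarev} depend only on $m$, not on $\alpha$ or on $f$. Therefore there is $N_2(m)$ such that, for all $n\geqslant N_2(m)$ and all $\alpha\in A$, the displayed quantity is at least $2^n/(2\cdot d!\,2^d)$.

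Finally I would assemble the count. Since $\deg D_\alpha f=m-1$, every fibre of $D_\alpha f$ has at most $m-1$ elements, so $\delta(f)\leqslant m-1$; together with the pairs just produced this gives $\delta(f)=m-1$ (recovering Theorem~\ref{Principal_7_mod_8}) and shows that each such pair realizes the maximum. For $n\geqslant\max(N_1(m),N_2(m))$ the collection of pairs $(\alpha,\beta)$ with $\alpha\in A$ and $\beta$ one of the at least $2^n/(2\cdot d!\,2^d)$ admissible values consists of distinct pairs (distinct first coordinates yield distinct pairs), and its cardinality is at least
\[
2^{n-2}\cdot\frac{2^n}{2\cdot d!\,2^d}=\frac{2^{2n}}{2^{d+3}\,d!}.
\]
So the theorem holds with $\varepsilon_m=2^{-(d+3)}/d!>0$. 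The only genuinely delicate step is the one isolated in the second paragraph: one must use the effective, not merely existential, form of Chebotarev, and be certain that both the main term $2^n/d_\Omega$ and the error term are controlled by constants depending on $m$ alone, uniformly over the $\alpha\in A$ (whose associated fields $\Omega$ a priori differ); this is exactly what the uniform bounds on $d_\Omega$ and $g_\Omega$ provide, and the remainder of the argument is bookkeeping.
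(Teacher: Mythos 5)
Your proposal is correct and follows essentially the same route as the paper: count the admissible $\alpha$ (about $2^{n-1}$ minus a constant depending only on $m$, via Theorem \ref{L_alpha_Morse} and the trace condition), then for each such $\alpha$ extract a quantitative lower bound of order $2^n/(d!\,2^d)$ on the admissible $\beta$ from the effective Chebotarev estimate in Proposition \ref{application_Chebotarev}, and multiply. Your explicit value $\varepsilon_m=2^{-(d+3)}/d!$ lies within the range $\varepsilon_m<1/(d!\,2^{d+1})$ obtained in the paper, and your emphasis on the uniformity of the bounds on $d_\Omega$ and $g_\Omega$ over $\alpha$ is exactly the point the paper relies on implicitly.
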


\begin{proof}
We follow the strategy described in the proofs above. The point is to give lower bounds for the number of choices of $\alpha$ and $\beta$.
We have shown the existence of a polynomial $P$ of degree 3 such that 
for any $n$ and any $f\in{\mathbb F}_{2^n}[x]$
there exist at least $2^{n-1} +P(m)$ elements $\alpha$ such that the extension $\Omega/{\mathbb F}_{2^n}(t)$ is regular (see the proof of Theorem \ref{Principal_7_mod_8}).
Thus
for any $\gamma_m<1/2$, for $n$ sufficiently large, there exists $\gamma_m2^n$
 suitable choices of $\alpha$.
For such a choice of $\alpha$, the Chebotarev theorem used in the proof of Proposition \ref{application_Chebotarev} guarantees the existence of 
$\frac{1}{d!2^d}2^n+Q(2^{n/4})$ elements $\beta$ such that $D_{\alpha}f(x)+\beta$ has $\delta(f)$ solutions   where $Q$ is a polynomial of degree 2.
Thus for any $\gamma_m'<1/d!2^d$, for $n$ sufficiently large, there exist $2^n\gamma_m'$ suitable choices of $\beta$.
Hence we obtain the result for any $\varepsilon_m<1/d!2^{d+1}$.
\end{proof}

Remark that the proof of Theorem \ref{proof_conjecture} provides  explicit values of $\varepsilon_m$, namely
any $\varepsilon_m$ between $0$ and $1/d!2^{d+1}$ with $d=\frac{m-1}{2}$.
Remark also that, in the case where $m\equiv 3\pmod 8$, the same strategy leads to a proof of an analogue of  this theorem for polynomials $f$ such that $a_1^2+a_0a_2 \neq 0$ or a proof of another analogue  for even $n$.

\bigskip

\emph{ Acknowledgements:} The third author would like to thank the I2M and CIRM
for support in connection with a number of visits to Luminy and the Simons Foundation for
financial support under grant \#234591.

Moreover, the authors thank the referee for valuable comments.

\bigskip
\bibliographystyle{plain}
%\bibliography{biblio_ter}

\end{document}